\numberwithin{equation}{section}
\begin{document}

	\title{Morse Index of Willmore spheres in $S^3$}
    \author{Alexis Michelat\footnote{Department of Mathematics, ETH Zentrum, CH-8093 Zürich, Switzerland.}}

    	\maketitle
    
    \vspace{1.5em}

    \begin{abstract}
    	We obtain an upper bound for the Morse index of Willmore spheres $\Sigma\subset S^3$ coming from an immersion of $S^2$. The quantization of Willmore energy shows that there exists an integer $m$ such that $\mathscr{W}(\Sigma)=4\pi m$. Then we show that $\mathrm{Ind}_{\mathscr{W}}(\Sigma)\leq m$. The proof relies on an explicit computation relating the second derivative of $\mathscr{W}$ for $\Sigma$ with the Jacobi operator of the minimal surface in $\mathbb{R}^3$ it is the image of by stereographic projection thanks of the fundamental classification of Robert Bryant.
    \end{abstract}

    \tableofcontents
    \vspace{0.5cm}
        \begin{center}
        {Mathematical subject classification : 53A10, 53A30, 53C42.}
        \end{center}

	\theoremstyle{plain}
	\newtheorem*{theorem*}{Theorem}
	\newtheorem{theorem}{Theorem}[section]
	\newtheorem{theoremdef}{Théorème-Définition}[section]
	\newtheorem{lemme}[theorem]{Lemma}
	\newtheorem{propdef}[theorem]{Définition-Proposition}
	\newtheorem{prop}[theorem]{Proposition}
	\newtheorem{cor}[theorem]{Corollary}
	\theoremstyle{definition}
	\newtheorem*{definition}{Definition}
	\newtheorem{defi}[theorem]{Definition}
	\newtheorem{rem}[theorem]{Remark}
	\newtheorem{exemple}[theorem]{Example}
\newcommand{\N}{\ensuremath{\mathbb{N}}}
\renewcommand\hat[1]{%
\savestack{\tmpbox}{\stretchto{%
  \scaleto{%
    \scalerel*[\widthof{\ensuremath{#1}}]{\kern-.6pt\bigwedge\kern-.6pt}%
    {\rule[-\textheight/2]{1ex}{\textheight}}%WIDTH-LIMITED BIG WEDGE
  }{\textheight}% 
}{0.5ex}}%
\stackon[1pt]{#1}{\tmpbox}
}
\parskip 1ex
\newcommand{\vc}[3]{\overset{#2}{\underset{#3}{#1}}}
\newcommand{\conv}[1][n]{\ensuremath{\underset{#1\rightarrow\infty}{\longrightarrow}}}
\newcommand{\A}{\ensuremath{\mathscr{A}}}
\newcommand{\D}{\ensuremath{\nabla}}
\renewcommand{\N}{\ensuremath{\mathbb{N}}}
\newcommand{\Z}{\ensuremath{\mathbb{Z}}}
\newcommand{\K}{\ensuremath{\mathscr{K}}}
\newcommand{\I}{\ensuremath{\mathbb{I}}}
\newcommand{\R}{\ensuremath{\mathbb{R}}}
\newcommand{\W}{\ensuremath{\mathscr{W}}}
\newcommand{\C}{\ensuremath{\mathbb{C}}}
\newcommand{\p}[1]{\ensuremath{\partial_{#1}}}
\newcommand{\Res}{\ensuremath{\mathrm{Res}}}
\newcommand{\lp}[2]{\ensuremath{\mathrm{L}^{#1}(#2)}}
\renewcommand{\wp}[3]{\ensuremath{\left\Vert #1\right\Vert_{\mathrm{W}^{#2}(#3)}}}
\newcommand{\np}[3][S^1]{\ensuremath{\left\Vert #2\right\Vert_{\mathrm{L}^{#3}(#1)}}}
\newcommand{\h}{\ensuremath{\mathscr{H}}}
\renewcommand{\Re}{\ensuremath{\mathrm{Re}\,}}
\renewcommand{\Im}{\ensuremath{\mathrm{Im}\,}}
\newcommand{\diam}{\ensuremath{\mathrm{diam}\,}}
\newcommand{\leb}{\ensuremath{\mathscr{L}}}
\newcommand{\supp}{\ensuremath{\mathrm{supp}\,}}
\renewcommand{\phi}{\ensuremath{\vec{\Phi}}}
\renewcommand{\psi}{\ensuremath{\vec{\Psi}}}
\renewcommand{\H}{\ensuremath{\vec{H}}}
\renewcommand{\epsilon}{\ensuremath{\varepsilon}}
\renewcommand{\bar}{\ensuremath{\overline}}
\newcommand{\s}[2]{\ensuremath{\left\langle #1,#2\right\rangle}}
\newcommand{\scal}[2]{\ensuremath{\langle #1,#2\rangle}}
\newcommand{\sg}[2]{\ensuremath{\left\langle #1,#2\right\rangle}}
\newcommand{\n}{\ensuremath{\vec{n}}}
\newcommand{\ens}[1]{\ensuremath{\left\{ #1\right\}}}
\newcommand{\lie}[2]{\ensuremath{\left[#1,#2\right]}}
\newcommand{\g}{\ensuremath{g}}
\newcommand{\e}{\ensuremath{\vec{e}}}
\newcommand{\ig}{\ensuremath{|\vec{\mathbb{I}}_{\phi}|}}
\newcommand{\ik}{\ensuremath{\left|\mathbb{I}_{\phi_k}\right|}}
\newcommand{\w}{\ensuremath{\vec{w}}}
\newcommand{\vg}{\ensuremath{\mathrm{vol}_g}}
\newcommand{\im}{\ensuremath{\mathrm{W}^{2,2}_{\iota}(\Sigma,N^n)}}
\newcommand{\imm}{\ensuremath{\mathrm{W}^{2,2}_{\iota}(\Sigma,\R^3)}}
\newcommand{\timm}[1]{\ensuremath{\mathrm{W}^{2,2}_{#1}(\Sigma,T\R^3)}}
\newcommand{\tim}[1]{\ensuremath{\mathrm{W}^{2,2}_{#1}(\Sigma,TN^n)}}
\renewcommand{\d}[1]{\ensuremath{\partial_{x_{#1}}}}
\newcommand{\dg}{\ensuremath{\mathrm{div}_{g}}}
\renewcommand{\Res}{\ensuremath{\mathrm{Res}}}
\newcommand{\res}{\mathbin{\vrule height 1.6ex depth 0pt width
			0.13ex\vrule height 0.13ex depth 0pt width 1.3ex}}
	\newcommand{\ala}[5]{\ensuremath{e^{-6\lambda}\left(e^{2\lambda_{#1}}\alpha_{#2}^{#3}-\mu\alpha_{#2}^{#1}\right)\left\langle \nabla_{\vec{e}_{#4}}\vec{w},\vec{\mathbb{I}}_{#5}\right\rangle}}
\setlength\boxtopsep{1pt}
\setlength\boxbottomsep{1pt}
\newcommand\norm[1]{%
	\setbox1\hbox{$#1$}%
	\setbox2\hbox{\addvbuffer{\usebox1}}%
	\stretchrel{\lvert}{\usebox2}\stretchrel*{\lvert}{\usebox2}%
}
\renewcommand{\qedsymbol}{}
\allowdisplaybreaks
\newcommand*\mcup{\mathbin{\mathpalette\mcapinn\relax}}
\newcommand*\mcapinn[2]{\vcenter{\hbox{$\mathsurround=0pt
			\ifx\displaystyle#1\textstyle\else#1\fi\bigcup$}}}
\newpage

\section{Introduction}
\subsection{Definitions and statement of main results}

The problem of estimating the index of a minimal surface is a rich one, with strong connections with complex analysis and algebraic geometry. In this paper the goal is to study the index of Willmore spheres $S^2\rightarrow S^3$. We first recall a few definitions. Let $(N^n,h)$ a smooth Riemannian manifold, and $\Sigma$ a (possibly non-closed) Riemann surface. For all smooth immersion $\psi:\Sigma\rightarrow N^n$, we define the Willmore functional by
\begin{align*}
	W_{N^n}(\psi)=\int_{\Sigma}|\vec{H}_g|^2\,d\vg
\end{align*}
if $g=\psi^\ast h$ is the induced metric on $\Sigma$, and $\vec{H}_g$ is the mean curvature tensor of $\psi(\Sigma)$. Then we can define a conformal Willmore functional by
\begin{align*}
	\W_{N^n}(\psi)=\int_{\Sigma}^{}\left(|\vec{H}_g|^2+K_h\right)\,d\vg
\end{align*}
if $K_h=K_{N^n}(\psi_\ast T\Sigma)$ is the sectional curvature of the $2$-plan $\psi_\ast T\Sigma\subset TN^n$.
It is conformal in the sense that if $\varphi:(N^n,h)\rightarrow (\widetilde{N}^n,\widetilde{h})$ is a conformal diffeomorphism, then for all immersion $\psi:\Sigma\rightarrow N^n$ of a closed surface $\Sigma$, we have : 
\begin{align*}
	\mathscr{W}_{N^n}(\psi)=\mathscr{W}_{\widetilde{N}^n}(\varphi\circ\psi).
\end{align*}
This is due in this general setting by Bang-Yen Chen (see \cite{chen1}, \cite{chen2}).
In the case $N^n=S^n$, we have simply
\begin{align*}
\mathscr{W}_{S^n}(\psi)=\int_{\Sigma}\left(1+|\vec{H}_g|^2\right)d\vg.
\end{align*}
 Furthermore if $p\in S^n$, and $\pi:S^n\setminus\ens{p}\rightarrow \R^n$ is a stereographic projection such that $p\notin\phi(\Sigma)\subset S^n$, as $\pi$ is conformal, we have
\begin{align*}
	\mathscr{W}_{S^n}(\psi)=\W_{\R^n}(\pi\circ \psi).
\end{align*}
In $\R^n$, we have obviously
\begin{align*}
	\W_{\R^n}(\psi)=W_{\R^n}(\psi)=\int_{\Sigma}|\vec{H}_g|^2\,d\vg
\end{align*}
However, if $\varphi:\R^n\cup\ens{\infty}\rightarrow\R^n\cup\ens{\infty}$ is a conformal transformation, we only have in general $\W(\varphi\circ \psi)=\W(\psi)$ in the case where $\varphi^{-1}(\infty)\cap\psi(\Sigma)=\varnothing$. In the special case of $\R^n$, we define a new globally conformal invariant, denoted by $\W_{\R^n}$ by abuse of notation, such that
\begin{align*}
	\W_{\R^n}(\psi)=\int_{\Sigma}(|\vec{H}_g|^2-K_g)d\vg
\end{align*}
where $K_g$ is the Gauss curvature of the surface $\psi(\Sigma)\subset\R^n$. As the $2$-form $(|\vec{H}_g|^2-K_g)d\vg$ is invariant by any conformal transformation in $\R^n$ (see \cite{blaschke}, \cite{willmore2}), \textit{a fortiori} $\W_{\R^n}$ is conformal invariant. If $\Sigma$ is closed, then by the Gauss-Bonnet theorem, we have
\begin{align*}
	\int_{\Sigma}K_gd\vg=2\pi\chi(\Sigma),
\end{align*}
where $\chi(\Sigma)$ is the Euler characteristic of $\Sigma$. So $W_{\R^n}$ and $\W_{\R^n}$ only differ by a constant in this case.
 We say that an immersion $\psi:\Sigma\rightarrow \R^3$ is a Willmore immersion if it is a critical point of $W$. We will always assume that Willmore immersions do not have branching points when they are defined from a closed Riemann surface $\Sigma$. For a smooth immersion, this is equivalent to
\begin{align*}
	\Delta_g H_g+2H_g(H_g^2-K_g)=0.
\end{align*}
if $H_g$ is the scalar curvature of $\psi(\Sigma)\subset\R^3$ (see \cite{riviere1} for a weak formulation of this formula and its consequences).
This equation goes back to the work of Blaschke (\cite{blaschke}) and Thomsen (\cite{thomsen}).  In the special case $\Sigma=S^2$, we say that a Willmore immersion is a Willmore sphere.
To define the (Morse) index, we first need a definition. 
\begin{definition}
	Let $(N^n,h)$ a Riemannian manifold that we suppose isometrically embedded in some euclidean space $\R^q$.
	We define
	\begin{align*}
		&\mathrm{W}^{2,2}(\Sigma,N^n)=\mathrm{W}^{2,2}(\Sigma,\R^q)\cap\ens{\psi:\phi(x)\in N^n\; \text{and}\; d\psi(x)\; \text{is injective for a.e.}\; x\in\Sigma}
	\end{align*}
	And for all $\psi\in\im$, we define
	\begin{align*}
		\tim{\psi}=W^{2,2}(\Sigma,TN^n)\cap\ens{\w: \w(x)\in  T_{\psi(x)}N^n\;\text{for a.e.}\; x\in\Sigma}.
	\end{align*}
\end{definition}
The index of a critical point $\psi\in\im\cap\mathrm{W}^{1,\infty}(\Sigma,N^n)$ of the Willmore functional $W$, noted $\mathrm{Ind}_W(\psi)$, is defined as the dimension of the subspace of $\tim{\psi}\cap\mathrm{W}^{1,\infty}(\Sigma,TN^n)$ where the second derivative $D^2W(\psi)$ is negative definite. Our main result is the following.
\begin{theorem}\label{main}
	Let $\psi:S^2\rightarrow S^3$ a Willmore sphere, and $m\in\N$ the integer defined by
	\begin{align*}
	m=\frac{1}{4\pi}\W(\psi)=\frac{1}{4\pi}\int_{S^2}(1+H_g^2)\,d\vg.
	\end{align*}
	Then we have
	\begin{equation}
	\mathrm{Ind}_{\W}(\psi)\leq m.
	\end{equation}
\end{theorem}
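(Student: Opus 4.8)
The plan is to exploit Bryant's classification, which identifies every Willmore sphere $\psi:S^2\to S^3$ with the conformal image of a complete minimal surface $\Sigma_0\subset\R^3$ with finite total curvature and embedded planar ends, the number of ends being exactly $m=\tfrac{1}{4\pi}\W(\psi)$. The strategy is to transplant the second variation of $\W$ at $\psi$ to the second variation of the area functional at $\Sigma_0$, i.e.\ to the Jacobi operator $L=\Delta_{\Sigma_0}+|\vec{\mathbb{I}}_{\Sigma_0}|^2$, and then to read off the bound from the structure of the ends. Since $\W$ is a conformal invariant of immersions into $S^3$ (equivalently of $(|\vec H_g|^2-K_g)d\vg$ in $\R^3$), the index is unchanged under the conformal map sending $\Sigma_0$ to $\psi(S^2)$, so it suffices to compute $\mathrm{Ind}_\W(\Sigma_0)$ on the noncompact minimal surface, where the normal variations $u\,\vec n$ are scalar.

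The key computational step is to show that for a complete minimal surface of finite total curvature in $\R^3$, $D^2\W$ restricted to normal variations equals (up to a positive factor and a divergence term that integrates to zero) the Jacobi form $Q(u)=\int_{\Sigma_0}\big(|\nabla u|^2-|\vec{\mathbb{I}}|^2u^2\big)\,d\vg$, so that $\mathrm{Ind}_\W=\mathrm{Ind}_A(\Sigma_0)$, the Morse index of $\Sigma_0$ as a minimal surface — but computed in the correct function space dictated by $\mathrm{W}^{2,2}\cap\mathrm{W}^{1,\infty}$ pulled back to $\Sigma_0$, which near each planar end behaves like a punctured disk with a metric that is asymptotically a round sphere (the end compactifies). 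This means admissible test functions must extend across the $m$ punctures, and the negative eigenspace of $L$ on such functions is what we must bound. The first steps are thus: (1) record Bryant's classification and the end count $=m$; (2) carry out the explicit computation of $D^2\W(\psi)[u\vec n,u\vec n]$ using the Willmore equation and the minimal surface equations, reducing it to $Q(u)$; (3) identify the correct space of variations on $\Sigma_0$.

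Then the index bound follows from a dimension count on the negative space of $L$. I would use the fact that after the conformal compactification each planar end becomes a smooth point and $L$ extends to an operator on $S^2$ (with possibly conical behavior absorbed because the ends are planar/embedded, hence the metric is $C^{1,\alpha}$), and that the Jacobi fields generated by the conformal group of $S^3$ (dimension of the Möbius group modulo isometries of the surface) together with the ``end'' deformations account for the whole negative space. More concretely, one shows $\mathrm{Ind}_\W(\Sigma_0)$ is controlled by the number of negative eigenvalues of $\Delta_{S^2}+|\vec{\mathbb{I}}|^2$ acting on functions on the compactified sphere, and that $\int_{S^2}|\vec{\mathbb{I}}|^2\,d\vg = 2\,W_{\R^n}$-type quantity $=8\pi m$ by Gauss–Bonnet ($\int |\vec{\mathbb{I}}|^2 = \int(4H^2-2K) = -4\pi\chi(S^2)=-8\pi$ for minimal, corrected by the ends to $8\pi(m-1)+\dots$); a Cheng-type or Li–Yau-type eigenvalue comparison then gives at most $m$ negative directions. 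The cleanest route is: each negative eigenfunction, restricted suitably, forces the potential $|\vec{\mathbb{I}}|^2$ to concentrate, and since the total is $\le 8\pi m$ with each end contributing a quantum $\ge 4\pi$ via the monotonicity/Li–Yau inequality, there can be at most $m$ such directions.

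The main obstacle I expect is step (2) together with the functional-analytic subtlety in step (3): the passage from $\W$ on $S^3$ to the area Jacobi operator on $\R^3$ is only formally a change of variables, and one must control the non-normal (tangential) components of the $\mathrm{W}^{2,2}$ variations — which are a priori in the problem because the index is defined over all of $\tim{\psi}\cap\mathrm{W}^{1,\infty}$, not just normal variations — and check that tangential variations, modulo reparametrization, do not contribute extra negative directions. Equivalently, the hard analytic point is that the Bryant conformal diffeomorphism is singular at the puncture(s), so the identification of $D^2\W$ with $Q$ requires a careful weighted-space argument near each end to show no negative directions are lost or gained in the compactification; this is where the assumption that Willmore spheres in $S^3$ have only embedded planar ends (via Bryant) is essential.
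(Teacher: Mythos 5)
Your opening moves (Bryant's classification, conformal invariance of $\W$, transplanting the second variation to the complete minimal surface with $m$ embedded planar ends) coincide with the paper's. But the central computational claim in your step (2) is wrong, and the error sits exactly where the theorem lives. For a minimal immersion $\phi:\Sigma\rightarrow\R^3$ and a normal variation $\w=w\n$ one has $D^2W(\phi)[\w,\w]=\tfrac{1}{2}\int_\Sigma(L_gw)^2\,d\vg$ with $L_g=\Delta_g-2K_g$ the Jacobi operator: the second variation of the Willmore energy at a minimal surface is the \emph{square} of the Jacobi operator applied to $w$, not the Jacobi quadratic form $Q(u)=\int(|\nabla u|^2-|\vec{\I}_g|^2u^2)\,d\vg$. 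In particular it is manifestly nonnegative, so $\mathrm{Ind}_{\W}$ cannot be identified with $\mathrm{Ind}_A(\Sigma_0)$ (the catenoid has area index one, while the second variation of $W$ at any minimal surface is $\geq 0$). The difference between $\W$ and $W$ is the total curvature term, whose second variation is the exact $2$-form $d\left((\Delta_gw+2K_gw)\star dw-\tfrac{1}{2}\star d|dw|_g^2\right)$; contrary to your assertion, this divergence term does \emph{not} integrate to zero on the punctured surface. By Stokes it produces boundary contributions at the $m$ ends which, evaluated from the Weierstrass--Enneper asymptotics of a planar end together with the substitution $w=|\phi|^2v$, equal $-4\pi\sum_{j}\mathrm{Res}_{p_j}(\phi)\,v^2(p_j)/R^2$. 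These boundary terms are the \emph{only} source of negativity: since they depend solely on the $m$ values $v(p_1),\dots,v(p_m)$, the quadratic form is nonnegative on the codimension-$m$ subspace $\ens{v(p_1)=\dots=v(p_m)=0}$, and that is precisely how the bound $\mathrm{Ind}_{\W}(\psi)\leq m$ is obtained.

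Consequently your step (3) --- compactifying the Jacobi operator to $S^2$ and counting negative eigenvalues of $\Delta+|\vec{\I}|^2$ by a Cheng/Tysk-type comparison with $\int|\vec{\I}_g|^2\,d\vg$ --- is aimed at the wrong quadratic form, and even on its own terms is not set up to yield the sharp constant $m$ (note also that $\int_\Sigma|\vec{\I}_g|^2\,d\vg=-2\int_\Sigma K_g\,d\vg=8\pi(m+\gamma-1)$ by Osserman's theorem, not $-4\pi\chi(S^2)$, which would be negative). Your worry about tangential variations is legitimate but secondary: at a critical point tangential variations are infinitesimal reparametrisations and do not contribute to the second variation, so restricting to normal variations is harmless. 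The genuinely delicate point you should have isolated is the explicit evaluation of the boundary integrals of the exact form at the planar ends, using the expansion $\phi(x)=(x,\,a+b\cdot x/|x|^2+O(|x|^{-2}))$ and the relation $w=|\phi|^2v$; without that computation there is no mechanism producing exactly $m$ potentially negative directions.
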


These Willmore spheres are always assumed to be defined globally, \textit{i.e.} they do not admit branching points. This implies that they are smooth (see \cite{rivierepcmi}, \cite{rivierecrelle}). This hypothesis is made in order to apply Bryant's theorem (\cite{bryant}). 

We conjecture that this bound is linearly correct, thanks of the analogy with the minimal surfaces theory (see \cite{fischer}, \cite{tysk}, \cite{ejiri1}, \cite{chodosh}). Furthermore, we think that it is possible to improve the bound $\mathrm{Ind}_W(\psi)\leq m$ (if $\W_{S^3}(\psi)=4\pi m$) to the bound $\mathrm{Ind}_W(\psi)\leq m-3$, because the first non-trivial Willmore sphere has energy $16\pi$, so for $m=4$ there should be only one direction to decrease the energy in the class of Willmore non-branched spheres. The non-existence of Willmore spheres with energy $8\pi$ and $12\pi$ is related to the non-existence of minimal surfaces with genus $0$, embedded planar ends and total curvature $-4\pi$ and $-8\pi$ (see \cite{lopez}, \cite{lopezmartin}).

\textit{Acknowledgement}. It is a pleasure to thank my advisor Tristan Rivière for useful discussions and motivation about this problem, which can be seen as a possible step towards the explicit computation of the sphere eversion (see open problems 4 and 7 in \cite{coutsphere}).

\subsection{Organisation of the paper}

We will first in section \ref{sectionW} derive the first and second variations of the Willmore functional in a general setting, and in section \ref{sectionK} the first and second variations of the Gauss curvature. We describe shortly the content of section \ref{proofmain}, dedicated to the formulas for the index and the proof of main theorem \ref{main}.

By conformity of $\W$,  the index of a Willmore sphere ${\psi}:S^2\rightarrow S^3$ is equal to the index of $\pi\circ {\psi}:S^2\rightarrow \R^3$, where $\pi$ is a stereographic projection  whose domain includes the image of ${\psi}$. 

Therefore, we fix some Willmore sphere $\psi:S^2\rightarrow\R^3$. Up to translation, by the theorem of Robert Bryant (\cite{bryant}), we have $\phi=i\circ\psi$ is a branched minimal surface with finite total curvature and planar ends (we refer to section \ref{proofmain} for the definitions), where $i:\R^3\rightarrow\R^3$ is the inversion at $0$. Therefore, we deduce that if $\vec{v}$ is a normal variation of $\psi$, then
\begin{align*}
D^2W(\psi)[\vec{v},\vec{v}]=D^2\W(\psi)[\vec{v},\vec{v}]=D^2\W(\phi)[\vec{w},\w]
\end{align*}
where $\w=|\phi|^2\vec{v}$. So the index of $\psi$ is equal to the index of $\phi$ for variations of the form $\w=|\phi|^2\vec{v}$. The latest can be explicitly computed, as for a minimal surface the index quadratic form simplifies significantly. Indeed, we have the following theorem, which is the purpose of section \ref{sectionK}.
\begin{theorem}
	Let $\Sigma$ a Riemann surface, and $\phi:\Sigma\rightarrow\R^3$ a smooth minimal immersion. Then for all normal variation $\w=w\n$, we have
	\begin{equation}\label{magiqueintro}
	D^2\W(\phi)[\w,\w]=\int_{\Sigma}^{}\left\{\frac{1}{2}(\Delta_gw-2K_gw)^2d\vg-d\left((\Delta_gw+2K_gw)\star dw-\frac{1}{2}\star d|dw|_g^2\right)\right\}
	\end{equation}
\end{theorem}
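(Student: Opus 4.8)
The plan is to write $\W=W-\int_\Sigma K_g\,d\vg$, with $W(\phi)=\int_\Sigma|\H_g|^2\,d\vg=\int_\Sigma H_g^2\,d\vg$, and to treat the two second variations at the minimal immersion $\phi$ separately. Since $\phi$ is minimal it satisfies $\Delta_g H_g+2H_g(H_g^2-K_g)=0$ trivially, hence is Willmore, so $D^2\W(\phi)$ is a well-defined quadratic form on normal fields and we are free to evaluate it along the straight variation $\phi_t:=\phi+tw\n$, for which $\dot\phi=w\n$, $\ddot\phi=0$ and the induced metric is \emph{exactly} $g_t=g+t\dot g+\tfrac{t^2}{2}\ddot g$. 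From the Weingarten relation $\partial_i\n=-A_i^{\,k}\partial_k\phi$ and the pointwise identity $A_i^{\,k}A_{kj}=-K_g\,g_{ij}$ — valid on a minimal surface in $\R^3$, where the principal curvatures are opposite, so also $|A|^2=-2K_g$ — one reads off $\dot g_{ij}=-2wA_{ij}$ and $\ddot g_{ij}=2\,\partial_iw\,\partial_jw-2K_gw^2g_{ij}$.

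For the $W$-part, differentiating $\int_\Sigma H_{g_t}^2\,d\mathrm{vol}_{g_t}$ twice in $t$ under the integral sign and expanding by Leibniz's rule, every resulting term carries a factor $H_g=0$ except $2\int_\Sigma(\dot H)^2\,d\vg$, where $\dot H$ is the first variation of the scalar mean curvature along $\phi_t$, computed in Section~\ref{sectionW}: $\dot H=\tfrac12(\Delta_gw+|A|^2w)$. By the Gauss equation this is $\dot H=\tfrac12(\Delta_gw-2K_gw)$, hence
\begin{equation*}
	D^2W(\phi)[w\n,w\n]=2\int_\Sigma(\dot H)^2\,d\vg=\frac12\int_\Sigma(\Delta_gw-2K_gw)^2\,d\vg,
\end{equation*}
which is precisely the bulk term of \eqref{magiqueintro}, with no boundary contribution.

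For the $\int_\Sigma K_g\,d\vg$-part, I would use that $K_g\,d\vg$ is locally an exact $2$-form with a metric-local primitive, so that along any path of metrics $g_t$ one has the first-variation identity (established in Section~\ref{sectionK})
\begin{equation*}
	\partial_t\!\left(K_{g_t}\,d\mathrm{vol}_{g_t}\right)=\tfrac12\,d\!\left(\star_{g_t}\!\left(\mathrm{div}_{g_t}\dot g_t-d(\mathrm{tr}_{g_t}\dot g_t)\right)\right),
\end{equation*}
where the Hodge star on $1$-forms is conformally invariant in dimension two. Differentiating once more at $t=0$ gives $\partial_t^2(K_{g_t}\,d\mathrm{vol}_{g_t})|_0=d\xi$ for a $1$-form $\xi$ built from $g$, $\dot g_0$, $\ddot g_0$, whence $D^2\!\left(\int_\Sigma K_g\,d\vg\right)(\phi)[w\n,w\n]=\int_\Sigma d\xi$. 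Here $\mathrm{tr}_g\dot g_0=-2w\,\mathrm{tr}_gA=0$ by minimality, and $\mathrm{div}_g\dot g_0=-2A(\nabla w,\cdot)$ by the contracted Codazzi equation $\mathrm{div}_gA=0$; inserting $\dot g_0=-2wA$ and $\ddot g_0=2\,dw\otimes dw-2K_gw^2g$ and simplifying with $|A|^2=-2K_g$, $\mathrm{div}_g(dw\otimes dw)=(\Delta_gw)\,dw+\tfrac12 d|dw|_g^2$ and $\mathrm{div}_g(fg)=df$, the bulk pieces of $\xi$ assemble into $(\Delta_gw+2K_gw)\star dw-\tfrac12\star d|dw|_g^2$. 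The leftover contributions — from the first variations of $\star$ and of the divergence/Christoffel operators — should collapse into an exact $1$-form $d(\cdot)$ that the outer $d$ annihilates, giving $\partial_t^2(K_{g_t}\,d\mathrm{vol}_{g_t})|_0=d\!\left((\Delta_gw+2K_gw)\star dw-\tfrac12\star d|dw|_g^2\right)$.

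Adding the two contributions yields \eqref{magiqueintro}. The hard part is this last step: carrying out the second $t$-derivative of $\star_{g_t}\!\left(\mathrm{div}_{g_t}\dot g_t-d(\mathrm{tr}_{g_t}\dot g_t)\right)$ — in particular the first variations of the Hodge star and of the Levi--Civita connection — and checking that, after invoking Gauss--Codazzi and integrating by parts the Hessian terms, everything collapses onto the stated $1$-form modulo an exact differential. A convenient consistency check is the flat disk $D\subset\R^2\subset\R^3$ (so $A=0$, $K_g=0$), where \eqref{magiqueintro} specializes to $\int_D\big(|\mathrm{Hess}\,w|^2-(\Delta w)^2\big)=-\int_{\partial D}\big((\Delta w)\star dw-\tfrac12\star d|dw|^2\big)$, a genuine identity which already fixes all the coefficients and signs.
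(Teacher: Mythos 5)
Your decomposition $\W=W-\int_\Sigma K_g\,d\vg$ and your treatment of the $W$-part coincide with the paper's: at a minimal immersion every term of $\tfrac{d^2}{dt^2}\int_\Sigma H_{g_t}^2\,d\mathrm{vol}_{g_t}$ carrying a factor $H_g$ vanishes, leaving $2\int_\Sigma(\dot H)^2\,d\vg$ with $\dot H=\tfrac12(\Delta_gw+|A|^2w)=\tfrac12(\Delta_gw-2K_gw)$; this part is complete and reproduces the bulk term. Your preliminary identities $\dot g=-2wA$ and $\ddot g=2\,dw\otimes dw-2K_gw^2g$ (from $A^2=-K_g\,\mathrm{id}$ on a minimal surface) are correct, and the flat-disk check at the end is a genuine identity (the divergence form of $2\det\mathrm{Hess}\,w=(\Delta w)^2-|\mathrm{Hess}\,w|^2$, via Bochner) --- but it only tests the formula when $K_g=0$, so it cannot confirm the coefficients of the curvature terms.

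The gap is in the $K$-part, and you name it yourself: the assertion that the leftover contributions from $\partial_t\star_{g_t}$, $\partial_t\mathrm{div}_{g_t}$ and the varying Christoffel symbols ``should collapse'' onto $d\big((\Delta_gw+2K_gw)\star dw-\tfrac12\star d|dw|_g^2\big)$ is precisely the content of the paper's Lemma \ref{d2k}, and it is nowhere verified. This is not routine bookkeeping: since $\dot g=-2wA$ is trace-free rather than conformal, the variation of the Hodge star does not vanish, and the cross terms between the linearized operators and $\dot g$ are exactly where the coefficient $+2K_gw$ (as opposed to $-2K_gw$, or $0$) is decided --- the flat consistency check is blind to this. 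Note also that the first-variation identity you invoke, $\partial_t(K_{g_t}d\mathrm{vol}_{g_t})=\tfrac12\,d\big(\star_{g_t}(\mathrm{div}_{g_t}\dot g_t-d\,\mathrm{tr}_{g_t}\dot g_t)\big)$, is not the one established in Section \ref{sectionK}: the paper proves the extrinsic formula $\tfrac{d}{dt}(K_{g_t}d\mathrm{vol}_{g_t})=d(\vec{\I}\res_g\star d\w)$ and then differentiates that explicit $1$-form once more in conformal coordinates, using minimality ($\tfrac{d}{dt}e^{2\lambda}|_{t=0}=0$, $\Delta_g\phi=0$, $|\D_{\e_i}\n|_g^2=-K_g$) to carry out the cancellation term by term. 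Your intrinsic route is viable and arguably cleaner in principle, but as written the decisive computation is asserted, not performed, so the proposal does not yet establish the theorem.
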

So the index of $\psi$ for $W$ (or $\W$) is equal to the index of $\phi$ for $\W$ and the special class of variations $\w=|\phi|^2v\n$. The residue term coming from the exact form will actually give all the negative directions, and it can be computed explicitly thanks of the Weierstrass-Enneper parametrisation and the planarity of the ends of the minimal surface $\phi(\Sigma)\subset \R^3$. Indeed, we have the following result.

\begin{theorem}
	Let $\Sigma$ a closed Riemann surface and $\phi:\Sigma\setminus\ens{p_1,\cdots,  p_m}\rightarrow \R^3$ a minimal immersion with $m$ embedded planar ends, such that $\psi=i\circ \phi:\Sigma\rightarrow \R^3$ is a non-branched Willmore immersion. Then for all normal variation $\vec{v}=v\n$ of $\psi$, we have
	\begin{equation}\label{magique}
	D^2\mathscr{W}(\psi)[\vec{v},\vec{v}]=\lim_{R\rightarrow 0}\frac{1}{2}\int_{\Sigma_R}(\Delta_g w-2K_gw)^2d\vg-4\pi\sum_{j=1}^m\frac{\mathrm{Res}_{p_j}\phi}{R^2}\,v^2(p_j)
	\end{equation}
	if $w=|\phi|^2v$, and $g=\phi^{\ast}g_{\,\R^3}$,
	 and
	 $
	 \displaystyle\Sigma_R=\Sigma\setminus\bigcup_{j=1}^{m}D_{\Sigma}^2(p_j,R).
	 $
\end{theorem}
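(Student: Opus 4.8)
The plan is to start from the clean formula (\ref{magiqueintro}) for a minimal immersion, specialize it to $\phi:\Sigma\setminus\{p_1,\dots,p_m\}\to\R^3$ with embedded planar ends, and take the variation $\w=w\n$ with $w=|\phi|^2 v$, where $v$ is a normal variation of the Willmore immersion $\psi=i\circ\phi$. By the identity recorded in the organisation section, $D^2\W(\psi)[\vec v,\vec v]=D^2\W(\phi)[\w,\w]$, so it suffices to evaluate the right-hand side of (\ref{magiqueintro}). The first term $\tfrac12\int_\Sigma(\Delta_g w-2K_g w)^2\,d\vg$ is handled by exhaustion: we work on $\Sigma_R=\Sigma\setminus\bigcup_j D^2_\Sigma(p_j,R)$ and pass to the limit $R\to 0$, which is exactly the first term on the right of (\ref{magique}). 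The entire content of the theorem is therefore the claim that the exact-form (boundary) term in (\ref{magiqueintro}) contributes, in the limit, precisely $-4\pi\sum_j \Res_{p_j}\phi\cdot v^2(p_j)/R^2$ — i.e. a concentrated "residue" contribution at each end.

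To compute the boundary term, I would fix a conformal coordinate $z$ near an end $p_j$, say $p_j=0$, and invoke the Weierstrass–Enneper representation of $\phi$ together with the hypothesis that the end is embedded and planar. Planarity forces the Weierstrass data to have a specific pole structure at $0$: the Gauss map $g_W$ has a zero (or the height differential a double pole with no residue) so that $\phi$ behaves like $\phi(z)=\Re\!\big(\tfrac{\vec A}{z}\big)+O(\log|z|)$-type expansions; the standard fact is that an embedded planar end looks, after inversion, like a graph with $\phi(z)\sim \vec a/|z|^2 \cdot(\text{something})$, and in particular $|\phi|^2\sim \Res_{p_j}\phi\cdot|z|^{-2}$ to leading order, where $\Res_{p_j}\phi$ is the quantity appearing in the statement (the coefficient in the expansion of $|\phi|^2$, or equivalently a residue of the Weierstrass form). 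Then $w=|\phi|^2 v\sim \Res_{p_j}\phi\cdot v(p_j)\,|z|^{-2}$, and I would expand $\Delta_g w$, $dw$, $|dw|_g^2$, and the metric $g=e^{2\lambda}|dz|^2$ in powers of $|z|$ near the end. The boundary term is $\oint_{\partial D^2_\Sigma(p_j,R)} \big((\Delta_g w+2K_g w)\star dw-\tfrac12\star d|dw|_g^2\big)$, oriented as the boundary of $\Sigma_R$; plugging the asymptotics in and keeping only the terms that survive as $R\to 0$ should isolate the coefficient $-4\pi\,\Res_{p_j}\phi\cdot v^2(p_j)/R^2$. The factor $4\pi$ will come from the angular integration $\oint d\theta$ combined with the precise powers of $R$ produced by differentiating $|z|^{-2}$.

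The main obstacle I expect is bookkeeping the asymptotic expansions carefully enough near a planar end: one must track not just the leading $|z|^{-2}$ behaviour of $|\phi|^2$ but the subleading terms (and the conformal factor $e^{2\lambda}$, which for a minimal surface blows up at an end), because the boundary integrand contains third derivatives of $w$ in disguise ($d|dw|_g^2$ and $\Delta_g w$), so naively leading-order terms can conspire to cancel and a genuinely lower-order term can produce the surviving $R^{-2}$ residue. Getting the orientation and sign conventions right (the boundary of $\Sigma_R$ is traversed with the disk on the outside, hence with a sign opposite to the usual counterclockwise circle) is where the overall minus sign in front of $4\pi$ is pinned down. A secondary point is to justify that the cross terms between different ends, and between the end contributions and the bulk, vanish in the limit — this follows because $v\in\mathrm{W}^{1,\infty}$ and the geometry is controlled away from the $p_j$, so only the diagonal $R^{-2}$ terms at each $p_j$ remain. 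Once the local model at one end is understood, summing over $j=1,\dots,m$ and combining with the bulk limit gives (\ref{magique}).
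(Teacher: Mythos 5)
Your plan coincides with the paper's proof: it starts from formula \eqref{magiqueintro} (Theorem \ref{der2}), applies Stokes on $\Sigma_R$, and evaluates the boundary term at each end via the Weierstrass--Enneper asymptotics of an embedded planar end, with the residue $\mathrm{Res}_{p_j}\phi$ and the sign emerging exactly as you describe from the angular integration and the orientation of $\partial\Sigma_R$. The only difference is organisational: the paper performs the expansion in the inverted graph coordinates $x\sim u/|u|^2$, where the metric is asymptotically flat, rather than in the conformal coordinate at the puncture where $e^{2\lambda}$ blows up, which makes the bookkeeping you rightly flag as the main obstacle tractable.
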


The residue of the minimal immersion $\phi$ is defined as follows. By the Weierstrass-Enneper parametrisation and the planarity of the ends of $\phi$, there exists a meromorphic immersion $f:\Sigma\rightarrow \C^3$ with at most simple poles at each end $p_j\in\Sigma$ ($1\leq j\leq m$), such that $\phi=\Re f$. Therefore we can define if $f=(f_1,f_2,f_3)$, and $1\leq j\leq m$, the residue vector of $f$ as
\begin{align*}
	\mathrm{Res}_{p_j}f(z)dz=\left(\mathrm{Res}_{p_j}f_1(z)dz,\,\mathrm{Res}_{p_j}f_2(z)dz,\,\mathrm{Res}_{p_j}f_3(z)dz\right)\in\C^3\setminus\ens{0}
\end{align*}
And the quantity
\begin{align*}
	\Res_{p_j}\phi=|\Res_{p_j}f(z)dz|^2
\end{align*}
is a well-defined positive number. The quadratic form $D^2\W(\psi)$ depends only on $f$ and can be seen as a special case of a family of Schrödinger operators associated to meromorphic functions. This shows the strong analogy with the index theory for minimal surfaces (see section \ref{quadraticschro} for the general discussion, and the papers of Shiu-Yuen Cheng and Johan Tysk \cite{tysk2}, Sebasti\'{a}n Montiel and Antonio Ros \cite{montielRos} for the links between Schrödinger operators and the index of minimal surfaces).

\section{First and second variation of Willmore functional}\label{sectionW}

\subsection{Definitions and notations}

Let $(N^n,h)$ a smooth Riemannian manifold, $\D$ its Levi-Civita connection, $R$ its Riemann tensor curvature, and $M^m\subset N^n$ an isometrically embedded sub-manifold of $N^n$. We can define the induced Levi-Civita of $g$, denoted by $\bar{\D}=\iota^\ast \D$ (and $\bar{R}$ the curvature of $\bar{\D}$) and characterised by
\begin{align*}
\bar{\D} X=(\D X)^\top=\pi_M(\D X)
\end{align*}
if $X\in \Gamma(TN)$, and $\pi_M:TN\rightarrow TM$ is the orthogonal projection. As a consequence, we have if $X,Y,Z\in TM$,
\begin{align*}
\s{\D_XY}{Z}=\s{\bar{\D}_{X}Y}{Z}.
\end{align*}

Let $\vec{\I}$ the second fundamental form of $M^m\subset N^n$, the symmetric two-tensor $\vec{\I}\in\Gamma(({T^{*}M)}^{\otimes2}\otimes (TM)^\perp)$, defined for all $X,Y\in TM$, by
\begin{align*}
\vec{\I}(X,Y)=\left(\D_{X}Y\right)^\perp=\pi_M^\perp(\D_{X}Y)
\end{align*}
if $\D$ is the Levi-Civita connection of $N$, and $\pi_M^\perp:TN\rightarrow (TM)^\perp$ is the orthogonal projection. As $\D$ is torsion-free, we have
\begin{align*}
\vec{\I}(X,Y)=(\D_{X}Y)^\perp=(\D_{Y}X-[X,Y])^\perp=(\D_{Y}X)^\perp=\vec{\I}(Y,X)
\end{align*}
The main symmetries of the second fundamental form are gathered in the following theorem (see \cite{paulin}), which we explicitly recall for the convenience of the reader and to fix notations.
\begin{theorem*}
	\begin{itemize}
		\item[$\mathrm{(i)}$] (\textbf{Gauss formula}) For all $X,Y\in \Gamma(TM)$, 
		\begin{align*}
		\D_XY=\bar{\D}_XY+\vec{\I}(X,Y)
		\end{align*}
		\item[$\mathrm{(ii)}$] (\textbf{Gauss equation)} For all $X,Y,Z,W\in\Gamma(TM)$,
		\begin{align*}
		\bar{R}(X,Y,Z,W)=R(X,Y,Z,W)+\s{\vec{\I}(Y,Z)}{\vec{\I}(X,W)}-\s{\vec{\I}(X,Z)}{\vec{\I}(Y,W)}.
		\end{align*}
		\item[$\mathrm{(iii)}$] (\textbf{Codazzi-Mainardi identity}) For all $X,Y,Z\in\Gamma(TM)$, we have
		\begin{align*}
		(\D_X^\perp \vec{\I})(Y,Z)=(\D_Y^\perp \vec{\I})(X,Z)+(R(X,Y)Z)^\perp.
		\end{align*}
	\end{itemize}
\end{theorem*}

In the following, we assume $N$ to be $3$-dimensional. Let $\Sigma$ a Riemann surface,  and $\phi\in\mathrm{W}^{2,2}(\Sigma,N^3)$ a smooth immersion. We restrict ourselves in the following computations to dimension $3$ only to simplify the presentation, as we will deal with a local normal unit vector-field inducing locally the second fundamental form, whereas for a $n$-manifold, we need to deal with a $(n-2)$-vector field, adding sums only in computations, and not in final formulas. Let $g=\phi^\ast h$ the induced metric on $\Sigma$, $(g_{i,j})_{1\leq i,j\leq 2}$ its local components and $(g^{i,j})_{1\leq i,j\leq 2}$ the components of the inverse of $g$. We define the mean-curvature tensor field $\vec{H}_g$ of the immersion $\phi:\Sigma\rightarrow N^3$ by
\begin{align*}
	\vec{H}_g=\frac{1}{2}\sum_{i,j=1}^2g^{i,j}\vec{\I}_{i,j}
\end{align*}
where for $1\leq i,j\leq 2$, we define $\vec{\I}_{i,j}=\vec{\I}(\e_i,\e_j)=(\D_{\e_i}\e_j)^\perp$ if $\D$ is the Levi-Civita connection on $(N^3,h)$, $\e_k=\d{k}\phi$ for $k=1,2$, and $\perp:TN^3\rightarrow \phi_\ast(T\Sigma)^\perp$ is the orthogonal projection.

\subsection{First variation of $W$}

We define for $i=1,2$, $\e_i=\d{i}\phi$, and we use a conformal local chart where
\begin{align*}
\s{\e_i}{\e_j}=e^{2\lambda}\delta_{i,j},
\end{align*}
and we want to compute the first and second variation of $\phi\in\im\cap W^{1,\infty}(\Sigma,N^3)$. First suppose that $\phi:\Sigma\rightarrow N$ is smooth. A variation of $\phi$ if a $C^2$ function ${\phi_t}\in C^2(I,\mathrm{W}_\iota^{2,2}(\Sigma,N^3)\cap C^{\infty}(\Sigma,N^3))$  from an open interval $I$ of $\R$ containing $0$, such that $\phi_0=\phi$. By abuse of notation the variation of $\phi$ is
\begin{align*}
\w=\left(\frac{d}{dt}\phi_t\right)_{|t=0}.
\end{align*}

\subsubsection{First variation of the metric}

We first compute the variation of the metric, its inverse and the induced volume form.
\begin{lemme}\label{lemme1}
	Under the preceding hypotheses, we have for $1\leq i,j\leq 2$, for all $t\in I$,
	\begin{align}
		&\frac{d}{dt}g_{i,j}=\s{\D_{\e_i}\w}{\e_j}+\s{\D_{\e_j}\w}{\e_i},\\
		&\frac{d}{dt}g^{i,j}=-e^{-4\lambda}\left(\s{\D_{\e_i}\w}{\e_j}+\s{\D_{\e_j}\w}{\e_i}\right),\\
		&\frac{d}{dt}d\vg=\s{d\phi}{d\w}_gd\vg.
	\end{align}
\end{lemme}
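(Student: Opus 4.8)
The plan is to work directly from the definitions, differentiating under the integral/pointwise in $t$ and using that the Levi-Civita connection $\D$ of $(N^3,h)$ is metric-compatible and torsion-free. First I would treat the variation of $g_{i,j}$. By definition $g_{i,j}=\s{\e_i}{\e_j}$ where $\e_i=\p{i}\phi_t$ depends on $t$; differentiating and using metric compatibility of $\D$ along the curve $t\mapsto \phi_t$,
\begin{align*}
\frac{d}{dt}g_{i,j}=\s{\D_t\e_i}{\e_j}+\s{\e_i}{\D_t\e_j}.
\end{align*}
The key point is the symmetry of mixed covariant derivatives: since $\D$ is torsion-free and $[\p{t},\p{i}]=0$ as coordinate vector fields on $I\times\Sigma$, we have $\D_t\e_i=\D_t\p{i}\phi_t=\D_{\e_i}\w$ at $t=0$ (and more generally $\D_t\p{i}\phi_t=\D_{\p{i}\phi_t}(\p{t}\phi_t)$ along the variation). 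This gives the first formula.

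For the second formula I would differentiate the identity $\sum_{k}g^{i,k}g_{k,j}=\delta^i_j$, obtaining $\frac{d}{dt}g^{i,j}=-\sum_{k,l}g^{i,k}g^{j,l}\frac{d}{dt}g_{k,l}$, and then specialize at $t=0$ where the chart is conformal, $g^{i,k}=e^{-2\lambda}\delta^{i,k}$, so that the double sum collapses to the stated expression with the $e^{-4\lambda}$ prefactor. (One should note this formula is the derivative evaluated at the conformal point; the phrasing "for all $t\in I$" should be read as: in a chart conformal at the relevant time, or equivalently the invariant formula $\frac{d}{dt}g^{i,j}=-\sum_{k,l}g^{i,k}g^{j,l}(\s{\D_{\e_k}\w}{\e_l}+\s{\D_{\e_l}\w}{\e_k})$.) For the volume form, write $d\vg=\sqrt{\det g}\,dx^1\wedge dx^2$ and use Jacobi's formula $\frac{d}{dt}\det g=\det g\cdot\mathrm{tr}(g^{-1}\frac{d}{dt}g)$, so $\frac{d}{dt}\sqrt{\det g}=\frac12\sqrt{\det g}\,\sum_{i,j}g^{i,j}\frac{d}{dt}g_{i,j}$; substituting the first formula, the right-hand side is $\sqrt{\det g}\sum_{i,j}g^{i,j}\s{\D_{\e_i}\w}{\e_j}$, which is precisely $\s{d\phi}{d\w}_g\,d\vg$ once one unwinds the definition of the pointwise metric contraction $\s{d\phi}{d\w}_g=\sum_{i,j}g^{i,j}\s{\D_{\e_i}\w}{\e_j}$.

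The only genuine subtlety — the rest being bookkeeping — is justifying the commutation $\D_t\p{i}\phi_t=\D_{\p{i}\phi_t}\w$ rigorously, i.e. that pulling back $\D$ to the product $I\times\Sigma$ and using vanishing torsion is legitimate at the stated $W^{2,2}\cap C^\infty$ regularity; here I would simply invoke smoothness of $\phi$ (assumed in this subsection) so that all objects are classical and the symmetry lemma for the pullback connection applies verbatim. A secondary point worth a line is checking signs and that no curvature term of $N^3$ appears: none does, because we differentiate in the variation parameter $t$, not in the surface directions, so only torsion-freeness (not flatness) is used.
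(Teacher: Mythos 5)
Your proposal is correct and follows essentially the same route as the paper: metric compatibility plus the torsion-free symmetry $\D_{\frac{d}{dt}}\p{i}\phi_t=\D_{\e_i}\w$ for the first identity, then evaluation in the conformal chart at $t=0$ for the other two (your use of Jacobi's formula and of $\frac{d}{dt}g^{-1}=-g^{-1}\dot g\, g^{-1}$ is just cleaner linear-algebra bookkeeping than the paper's explicit $2\times 2$ cofactor expansion). Your remark that the second and third formulas, as written with the $e^{-4\lambda}$ prefactor, really hold only at the conformal time $t=0$ is an accurate reading of what the paper's proof actually establishes despite the lemma's ``for all $t\in I$'' phrasing.
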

\begin{proof}
	Fix $1\leq i,j\leq 2$. Locally, we have $g_{i,j}=\s{\d{i}\phi_t}{\d{j}\phi_t}$. Therefore, the compatibility with the metric gives
	\begin{align*}
		\frac{d}{dt}\s{\d{i}\phi_t}{\d{j}\phi_t}&=\s{\D_{\frac{d}{dt}}\d{i}\phi}{\d{j}\phi_t}+\s{\D_{\frac{d}{dt}}\d{j}\phi_t}{\d{i}\phi_t}\\
		&=\s{\D_{\e_i}\w}{\e_j}+\s{\D_{\e_j}\w}{\e_i}
	\end{align*}
Then,
\begin{align*}
\frac{d}{dt}\det {g_t}&=\frac{d}{dt}\left(g_{1,1}g_{2,2}-g_{1,2}^2\right)\\
&=2\left(\s{\D_{\e_1}\w}{\e_1}g_{2,2}+\s{\D_{\e_2}\w}{\e_2}g_{1,1}-(\s{\D_{\e_1}\w}{\e_2}+\s{\D_{\e_2}\w}{\e_1})g_{1,2}\right)
\end{align*}
which gives at $t=0$
\begin{align*}
{\frac{d}{dt}\det{g_t}}_{|t=0}&=2e^{2\lambda}\left(\s{\D_{\e_1}\w}{\e_1}+\s{\D_{\e_2}\w}{\e_2}\right)\\
&=2e^{4\lambda}\s{d\phi}{d\w}_g
\end{align*}
and this implies as $d\mathrm{vol}_{g_t}=\sqrt{\det g_t}\,dx_1\wedge dx_2$ locally that
\begin{align*}
{\frac{d}{dt}\mathrm{vol}_{g_t}}_{|t=0}=\s{d\phi}{d\w}_gd\vg
\end{align*}
Now, the explicit formula
 \begin{align*}
 	g^{i,j}=(-1)^{i+j}\frac{g_{i+1,j+1}}{\det g}
 \end{align*}
 gives
 \begin{align*}
 	\frac{d}{dt}g^{i,j}=(-1)^{i+j}\left(e^{-4\lambda}\left(\s{\D_{\e_{i+1}}\w}{\e_{j+1}}+\s{\D_{\e_{j+1}}\w}{\e_{i+1}}\right)-\delta_{i,j}e^{2\lambda}\frac{2e^{2\lambda}(\s{\D_{\e_i}\w}{\e_i}+\s{\D_{\e_{i+1}}\w}{\e_{i+1}})}{e^{8\lambda}}\right)\\
 \end{align*}
 so if $i=j$, we get
 \begin{align*}
 	\frac{d}{dt}g^{i,i}=-2e^{-4\lambda}\s{\D_{\e_i}\w}{\e_i}
 \end{align*}
 and
 \begin{align*}
 	\frac{d}{dt}g_{1,2}=-e^{-4\lambda}\left(\s{\D_{\e_1}\w}{\e_2}+\s{\D_{\e_2}\w}{\e_1}\right)
 \end{align*}
 so we deduce that
 \begin{align*}
 	\frac{d}{dt}g^{i,j}=-e^{-4\lambda}\left(\s{\D_{\e_i}\w}{\e_j}+\s{\D_{\e_i}\w}{\e_j}\right).
 \end{align*}
 which concludes the proof of the lemma.
\end{proof}

\subsubsection{First variation of the second fundamental form}

We note that even if we have chosen a local chart where $\phi$ is conformal, for $t\neq 0$, $\phi_t$ is not conformal in general, and as we aim as computing second derivative, we must keep track of the exact quantities depending on $t$. Therefore, we introduce the following quantities.

\begin{defi}
	For all $t\in I$, let
	\begin{align*}
		e^{2\lambda(t)}=|\d{1}\phi_t\wedge \d{2}\phi_t|,\quad e^{2\lambda_i(t)}=g_{i,i}(t),\quad\mu(t)=g_{1,2}(t)=\s{\d{1}\phi_t}{\d{2}\phi_t}
	\end{align*}
\end{defi}

\begin{lemme}\label{lemmeortho}
	Let $\vec{v}_t\in\Gamma(TN^3)$ a time-dependant smooth vector field on $N^3$, and denote $\pi_{\phi_t}$ the orthogonal projection on $TN^3\rightarrow \phi_{t\ast}(T\Sigma)$. Then we have
	\begin{align*}
		\pi_{\phi_t}(\vec{v}_t)&=g_t^{-1}
		\left(\begin{aligned}
		&\s{\vec{v}_t}{\d{1}\phi_t}\\
		&\s{\vec{v}_t}{\d{2}\phi_t}
		\end{aligned}\right)(\d{1}\phi_t,\,\d{2}\phi_t)
	\end{align*}
	where $g_t^{-1}$ is the inverse of the metric $g_t=\phi_t^\ast h$, viewed as a squared 2-matrix.
\end{lemme}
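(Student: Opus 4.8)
The statement to prove is Lemma \ref{lemmeortho}: for a time-dependent vector field $\vec{v}_t$ on $N^3$, the orthogonal projection onto $\phi_{t\ast}(T\Sigma)$ is given by the stated formula.

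\paragraph{Plan of proof.}

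The plan is to work pointwise on $\Sigma$ and express the orthogonal projection in the (non-orthonormal, since $\phi_t$ is no longer conformal for $t\neq 0$) frame $\{\d{1}\phi_t,\d{2}\phi_t\}$ of the tangent plane $\phi_{t\ast}(T_x\Sigma)$. First I would write $\pi_{\phi_t}(\vec{v}_t)=\sum_{k=1}^2 a_k\,\d{k}\phi_t$ for unknown scalar coefficients $a_1,a_2$, which is legitimate because $\{\d{1}\phi_t,\d{2}\phi_t\}$ is a basis of the image tangent plane (the immersion hypothesis guarantees $\d{1}\phi_t\wedge\d{2}\phi_t\neq 0$). The coefficients are then pinned down by the defining property of the orthogonal projection: $\vec{v}_t-\pi_{\phi_t}(\vec{v}_t)$ is orthogonal to $\phi_{t\ast}(T\Sigma)$, i.e. $\s{\vec{v}_t-\pi_{\phi_t}(\vec{v}_t)}{\d{j}\phi_t}=0$ for $j=1,2$.

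Next I would expand these two orthogonality conditions. They read $\s{\vec{v}_t}{\d{j}\phi_t}=\sum_{k=1}^2 a_k\s{\d{k}\phi_t}{\d{j}\phi_t}=\sum_{k=1}^2 a_k (g_t)_{k,j}$, which is the linear system $g_t\,(a_1,a_2)^{\mathsf T}=(\s{\vec{v}_t}{\d{1}\phi_t},\s{\vec{v}_t}{\d{2}\phi_t})^{\mathsf T}$. Since $g_t$ is invertible (being the Gram matrix of a basis, its determinant $e^{4\lambda(t)}$ is strictly positive), we solve $(a_1,a_2)^{\mathsf T}=g_t^{-1}(\s{\vec{v}_t}{\d{1}\phi_t},\s{\vec{v}_t}{\d{2}\phi_t})^{\mathsf T}$. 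Substituting back into $\pi_{\phi_t}(\vec{v}_t)=\sum_k a_k\,\d{k}\phi_t=(\d{1}\phi_t,\d{2}\phi_t)\,(a_1,a_2)^{\mathsf T}$ yields exactly the claimed formula, after noting that the row-vector/column-vector bracketing in the statement is just this matrix product written out.

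\paragraph{Main obstacle.}

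There is no serious obstacle here: this is the standard formula for orthogonal projection onto a subspace spanned by a non-orthonormal basis, specialized to the tangent plane of an immersion, and the only point requiring a word is the invertibility of $g_t$, which follows from $\phi_t$ being an immersion near $t=0$ (so $g_t$ is positive definite, $\det g_t=e^{4\lambda(t)}>0$). The single thing worth being careful about is bookkeeping of the matrix conventions — whether $g_t^{-1}$ acts on the left or the right of the column of inner products, and the placement of the row vector $(\d{1}\phi_t,\d{2}\phi_t)$ — so that the final expression matches the notation used later in the paper when differentiating in $t$; but this is purely formal and is settled by the computation above.
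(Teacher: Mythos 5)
Your proof is correct and is exactly the standard argument the paper implicitly relies on: the paper states this lemma without proof, and the explicit $e^{-4\lambda(t)}\bigl(e^{2\lambda_2(t)}\cdots-\mu(t)\cdots\bigr)$ expressions used later (e.g.\ in Lemma \ref{lemme2} and in \eqref{projtang}) are just the entries of the inverse Gram matrix you compute. Nothing is missing; the only substantive point, invertibility of $g_t$ from the immersion hypothesis, is addressed.
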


\begin{lemme}\label{lemme2}
	For all $1\leq i,j\leq 2$,
	\begin{equation}
		\D_{\frac{d}{dt}}^\perp \vec{\I}_{i,j}=\left((\D_{\e_i}\D_{\e_j}-\D_{(\D_{\e_i}\e_j)^\top})\w+R(\w,\e_i)\e_j\right)^\perp.
	\end{equation}
\end{lemme}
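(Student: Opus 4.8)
The goal is to compute $\D_{\frac{d}{dt}}^\perp \vec{\I}_{i,j}$ at $t=0$, where $\vec{\I}_{i,j}=\vec{\I}(\e_i,\e_j)=(\D_{\e_i}\e_j)^\perp$. The natural starting point is the definition of the second fundamental form together with the Gauss formula $\D_{\e_i}\e_j=\bar{\D}_{\e_i}\e_j+\vec{\I}_{i,j}$, which lets me write $\vec{\I}_{i,j}=\D_{\e_i}\e_j-(\D_{\e_i}\e_j)^\top$. The plan is to differentiate this identity in $t$, being careful that both $\e_i=\d{i}\phi_t$ and the tangential projection $(\,\cdot\,)^\top=\pi_{\phi_t}$ depend on $t$, and then take the normal component. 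The advantage of working with $\D_{\frac{d}{dt}}^\perp$ rather than $\D_{\frac{d}{dt}}$ is that differentiating the projection $\pi_{\phi_t}^\perp$ and then projecting again normally kills several terms; more precisely, if $X_t$ is a curve of vectors with $X_0$ normal, then $\big(\D_{\frac{d}{dt}}(\pi_{\phi_t}^\perp X_t)\big)^\perp = (\D_{\frac{d}{dt}}X_t)^\perp$ up to terms that vanish because the derivative of a projection maps into the complementary subspace — this is the computational lemma I would isolate first.

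First I would establish that $\D_{\frac{d}{dt}}\e_i = \D_{\frac{d}{dt}}\d{i}\phi_t = \D_{\e_i}\w$ (equality of mixed covariant derivatives along the two-parameter map $(t,x)\mapsto\phi_t(x)$, using that $\D$ is torsion-free and the bracket $[\p t,\d i]$ vanishes). Then, starting from $\vec{\I}_{i,j}=(\D_{\e_i}\e_j)^\perp$, I write $\D_{\frac{d}{dt}}^\perp\vec{\I}_{i,j}=\big(\D_{\frac{d}{dt}}(\D_{\e_i}\e_j)^\perp\big)^\perp$. Using the projection remark above, this equals $\big(\D_{\frac{d}{dt}}\D_{\e_i}\e_j\big)^\perp$. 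Now I commute $\D_{\frac{d}{dt}}$ past $\D_{\e_i}$: the curvature identity gives $\D_{\frac{d}{dt}}\D_{\e_i}\e_j = \D_{\e_i}\D_{\frac{d}{dt}}\e_j + R(\w,\e_i)\e_j$ (again the relevant bracket term is zero since $\p t$ and $\d i$ commute as vector fields on $I\times\Sigma$). Substituting $\D_{\frac{d}{dt}}\e_j=\D_{\e_j}\w$ yields $\D_{\frac{d}{dt}}\D_{\e_i}\e_j = \D_{\e_i}\D_{\e_j}\w + R(\w,\e_i)\e_j$. It remains to account for the fact that the inner $\e_j$ in $\D_{\e_i}\D_{\e_j}\w$ is being differentiated; carrying this out and tracking the tangential correction produces the term $-\D_{(\D_{\e_i}\e_j)^\top}\w$, which is exactly the second operator in the stated formula. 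Taking normal components of the whole expression gives the claim.

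The main obstacle I anticipate is bookkeeping the $t$-dependence of the tangential projection and making the informal statement "differentiating a projection lands in the complementary subspace, hence is killed by projecting back" fully rigorous in the Riemannian (non-flat) setting — in particular justifying that cross terms of the form $\big(\D_{\frac{d}{dt}}\pi_{\phi_t}^\perp\big)(\D_{\e_i}\e_j)$, after a further normal projection, either vanish or get absorbed. Lemma \ref{lemmeortho} giving the explicit form of $\pi_{\phi_t}$ in terms of $g_t^{-1}$ and $\d{k}\phi_t$ is presumably meant to be invoked here: one differentiates that explicit expression, uses Lemma \ref{lemme1} for $\frac{d}{dt}g_{i,j}$ and $\frac{d}{dt}g^{i,j}$, and checks that the surviving contribution after the outer $\perp$ is precisely the $-\D_{(\D_{\e_i}\e_j)^\top}\w$ term (the tangential part of $\D_{\e_j}\w$ contracted against the shape operator). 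The curvature term $R(\w,\e_i)\e_j$ is the one genuinely new feature compared to the Euclidean case and comes out cleanly from the commutation step. Everything else is a routine, if slightly lengthy, manipulation of covariant derivatives along a two-parameter family.
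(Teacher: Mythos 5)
Your overall route is the same as the paper's: write $\vec{\I}_{i,j}=(\D_{\e_i}\e_j)^\perp$, differentiate in $t$ while tracking the $t$-dependence of the orthogonal projection via Lemma \ref{lemmeortho}, and commute $\D_{\frac{d}{dt}}$ past $\D_{\e_i}$ at the cost of the curvature term $R(\w,\e_i)\e_j$. However, one step in your second paragraph is wrong as stated, and it is exactly the step that produces the middle term of the formula. Your ``projection remark'' $\big(\D_{\frac{d}{dt}}(\pi^\perp_{\phi_t}X_t)\big)^\perp=(\D_{\frac{d}{dt}}X_t)^\perp$ requires $X_0$ to be normal; here $X_t=\D_{\e_i}\e_j$ has a nonzero tangential part $(\D_{\e_i}\e_j)^\top=\bar{\D}_{\e_i}\e_j$, so the derivative of the projection is \emph{not} killed by the outer $\perp$. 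The correct identity, which the paper derives by computing $\D_{\frac{d}{dt}}\n_t$ from Lemma \ref{lemmeortho} (together with $\D^\perp_{\frac{d}{dt}}\n_t=0$), is
\begin{equation*}
\D_{\frac{d}{dt}}^\perp(\vec{v}_t)^\perp=\D_{\frac{d}{dt}}^\perp\vec{v}_t-\D_{\vec{v}_t^\top}^\perp\w_t,
\end{equation*}
and applying it with $\vec{v}_t=\D_{\e_i}\e_j$ is what yields $-\big(\D_{(\D_{\e_i}\e_j)^\top}\w\big)^\perp$. Your attribution of that term to ``the inner $\e_j$ in $\D_{\e_i}\D_{\e_j}\w$ being differentiated'' does not correspond to any actual computation: once you have commuted and substituted $\D_{\frac{d}{dt}}\e_j=\D_{\e_j}\w$, the expression $\D_{\e_i}\D_{\e_j}\w+R(\w,\e_i)\e_j$ is complete and generates no further correction. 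Your final paragraph does locate the correction correctly (as the surviving contribution of the differentiated projection acting on the tangential part), so the confusion is self-contained; but as written the second paragraph would lead to either a missing or a double-counted term, and should be replaced by the displayed identity above.
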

\begin{proof}
As $\n$ is a unit vector, we have $\D_{\frac{d}{dt}}^\perp\n_t=0$, as
\begin{align*}
\s{\D_{\frac{d}{dt}}\n_t}{\n_t}=\s{\D_{\frac{d}{dt}}^\perp\n_t}{\n_t}=0,
\end{align*}
and furthermore, by lemma \eqref{lemmeortho},
\begin{align*}
\D_{\frac{d}{dt}}\n_t&=
-e^{-4\lambda(t)}\left(e^{2\lambda_2(t)}\s{\n_t}{\D_{\d{1}\phi_t}\w_t}-\mu(t)\s{\n_t}{\D_{\d{2}\phi_t}\w_t}\right)\d{1}\phi_t\\
	&-e^{-4\lambda(t)}\left(-\mu(t)\s{\n_t}{\D_{\d{1}\phi_t}\w_t}+e^{2\lambda_1(t)}\s{\n_t}{\D_{\d{2}\phi_t}\w_t}\right)\d{2}\phi_t\\
	&=-e^{-4\lambda}\left(e^{2\lambda_2}\s{\n}{\D_{\e_1}\w}-\mu\s{\n}{\D_{\e_2}\w}\right)\e_1
	-e^{-4\lambda}\left(-\mu\s{\n}{\D_{\e_1}\w}+e^{2\lambda_1}\s{\n}{\D_{\e_2}\w}\right)\e_2
\end{align*}
where $\w_t=\D_{\frac{d}{dt}}\phi_t$, dropping the $t$ index on the last line.
And, if $\vec{v}_t$ is a smooth vector-field on $N^3$ depending of $t$, then we have
\begin{align*}
	&\s{\vec{v}_t}{\D_{\frac{d}{dt}}\n_t}=\s{\vec{v}_t}{-e^{-4\lambda}\left(e^{2\lambda_2}\s{\n}{\D_{\e_1}\w}-\mu\s{\n}{\D_{\e_2}\w}\right)\e_1
		-e^{-4\lambda}\left(-\mu\s{\n}{\D_{\e_1}\w}+e^{2\lambda_1}\s{\n}{\D_{\e_2}\w}\right)\e_2}\\
	&=-e^{-4\lambda}\left(e^{2\lambda_2}\s{\vec{v}_t}{\e_1}-\mu(t)\s{\vec{v}_t}{\e_2}\right)\s{\n}{\D_{\e_1}\w}-e^{-4\lambda}\left(e^{-2\lambda_1}\s{\vec{v}_t}{\e_2}-\mu\s{\vec{v}_t}{\e_1}\right)\s{\n}{\D_{\e_2}\w}\\
	&=-\s{\n_t}{\D_{\pi_{\phi}(\vec{v}_t)}\w_t}
	=-\s{\n_t}{\D_{(\vec{v}_t)^\top}\w_t}
\end{align*}
so we get
\begin{equation}\label{normalderivative}
\D_{\frac{d}{dt}}^\perp(\vec{v}_t)^\perp=\D_{\frac{d}{dt}}^\perp\vec{v}_t-\D_{\vec{v}_t^\top}^\perp\w_t.
\end{equation}
Therefore
\begin{align*}
\D_{\frac{d}{dt}}^\perp\vec{\I}_{i,j}=\s{\D_{\frac{d}{dt}}\D_{\e_i}\e_j}{\n}\n-\s{\n}{\D_{(\D_{\e_i}\e_j)^\top}\w}\n
\end{align*}
and
\begin{align*}
\D_{\frac{d}{dt}}\D_{\e_i}\e_j&=\D_{\e_i}\D_{\frac{d}{dt}}\e_j+R(\w,\e_i)\e_j\\
&=\D_{\e_i}\D_{\e_j}\w+R(\w,\e_i)\e_j
\end{align*}
implies that
\begin{align}
\D_{\frac{d}{dt}}^\perp\vec{\I}_{i,j}&=\s{\D_{\e_i}\D_{\e_j}\w+R(\w,\e_i)\e_j-\D_{(\D_{\e_i}\e_j)^\top}\w}{\n}\n\nonumber\\
&=\s{(\D_{\e_i}\D_{\e_j}-\D_{(\D_{\e_i}\e_j)^\top})\w+R(\w,\e_i)\e_j}{\n}\n\nonumber\\
&=\left((\D_{\e_i}\D_{\e_j}-\D_{(\D_{\e_i}\e_j)^\top})\w+R(\w,\e_i)\e_j\right)^\perp\nonumber
\end{align}
which concludes the proof of the lemma.
\end{proof}

\subsubsection{First variation of the mean curvature}
We deduce from \eqref{lemme1} and \eqref{lemme2} that, making the shift of notation $\e_i=e^{-\lambda}\d{i}\phi$ (therefore $(\e_1,\e_2)$ is an orthonormal frame for $\phi$)
\begin{align*}
\D_{\frac{d}{dt}}^\perp H_{g_t}&=\frac{1}{2}\sum_{i,j=1}^{2}\left(\frac{d}{dt}g^{i,j}\right)\vec{\I}_{i,j}+g^{i,j}\left(\D_{\frac{d}{dt}}^{\perp}\vec{\I}_{i,j}\right)\\
&=\frac{1}{2}\sum_{i,j=1}^2\bigg(-\left(\s{\D_{\e_i}\w}{\e_j}+\s{\D_{\e_j}\w}{\e_i}\right)\vec{\I}_{i,j}\bigg)\\
&+\frac{1}{2}e^{-2\lambda}\sum_{k=1}^2\left(\left(\D_{\e_k}\D_{\e_k}-\D_{(\D_{\e_k}\e_k)^\top}\right)\w+R(\w,\e_k)\e_k\right)^\perp\\
&=\frac{1}{2}\left(\Delta^{\n}_g\w+\mathscr{R}_1^{\n}(\w)-\sum_{i,j=1}^2\left(\s{\D_{\e_i}\w}{\e_j}+\s{\D_{\e_j}\w}{\e_i}\right)\vec{\I}_{i,j}\right).
\end{align*}
If $\Delta^{\n}_g$ is the normal Laplacien, defined by (see the book of Tobias Colding and William Minicozzi \cite{coldingminicozzi1})
\begin{align*}
	\Delta_g^{\n}\w=\left(\sum_{k=1}^{2}\left(\D_{\e_k}\D_{\e_k}-\D_{\left(\D_{\e_k}\e_k\right)^\top}\right)\w\right)^\perp,
\end{align*}
and 
\begin{align*}
	\mathscr{R}_1^{\n}(\w)=\left(\sum_{k=1}^2 R(\w,\e_k)\e_k\right)
\end{align*} 
Therefore, we deduce that
\begin{align*}
	&DW(\phi)\cdot \w={\left(\frac{d}{dt}\int_{\Sigma}^{}|\vec{H}_{g_t}|^2d\mathrm{vol}_{g_t}\right)}_{|t=0}=\left(\int_{\Sigma}^{}2\s{\D_{\frac{d}{dt}}\vec{H}_{g_t}}{\vec{H}_{g_t}}d\mathrm{vol}_{g_t}+\int_{\Sigma}^{}|\vec{H}_{g_t}|^2\left(\frac{d}{dt}d\mathrm{vol}_{g_t}\right)\right)_{|t=0}\\
	&=\int_{\Sigma}^{}\s{\Delta^{\n}_g\w+\mathscr{R}_1^{\n}(\w)-\sum_{i,j=1}^2\left(\s{\D_{\e_i}\w}{\e_j}+\s{\D_{\e_j}\w}{\e_i}\right)\vec{\I}_{i,j}}{\vec{H}_g}d\vg+\int_{\Sigma}^{}|\vec{H}_g|^2\s{d\phi}{d\w}d\vg.
\end{align*}
We note that this formula holds for the minimal regularity assumption \textit{i.e.} for $\phi\in\im\cap W^{1,\infty}(\Sigma,N^3)$. Furthermore, as mentioned in the introduction, it does not depend on the dimension of $N$, and is actually valid in any dimension. Indeed, in a Riemannian manifold $(N^n,h)$ one simply needs to replace $\n$ with a $(n-2)$-vector inducing the second fundamental form, still denoted by $\n$. Then locally, $\n=\n_1\wedge\cdots \n_{n-2}$ where $(\n_1,\cdots,\n_{n-2})$ is an orthonormal basis of the normal bundle of $\phi_t$. Extending by parallel transport the $\n_j$ ($1\leq j\leq n-2$) such that $\D_{\frac{d}{dt}}^\perp \n_j=0$, the formula \eqref{normalderivative} is still correct and we get immediately the result.

If $\phi$ is smooth, and $\w$ is a normal variation, $\s{\D_{\e_i}\w}{\e_j}=-\s{\w}{\n}\I_{i,j}=-\s{\w}{\vec{\I}_{i,j}}$, so we get
\begin{align*}
\frac{d}{dt}\int_{\Sigma}H_g^2d\vg&=\int_{\Sigma}\s{\Delta^{\n}_g\w+\mathscr{R}^\perp(\w)+2\sum_{i,j=1}^2\s{\w}{\vec{\I}_{i,j}}\vec{\I}_{i,j}}{\vec{H}_g}d\vg\\
&-2\int_{\Sigma}|\vec{H}_g|^2\s{\vec{H}_g}{\w}d\vg\\
&=\int_{\Sigma}\s{\Delta^{\n}_g\vec{H}_g-2|\vec{H}_g|^2\vec{H}_g+2\mathscr{A}(\vec{H}_g)+\mathscr{R}_1^{\n}(\vec{H}_g)}{\w}d\vg
\end{align*}
This gives the classical Willmore equation (see the paper of Joel Weiner \cite{weiner}, and note the different conventions we use here)
\begin{equation}
\Delta^{\n}_g\vec{H}_g-2|\vec{H}_g|^2\vec{H}_g+2\mathscr{A}(\vec{H}_g)+\mathscr{R}_1^{\n}(\vec{H}_g)=0.
\end{equation}
which is valid in an arbitrary Riemannian manifold $(N^n,h)$ such that $\phi:\Sigma\rightarrow N^n$ is an immersion. Here $\mathscr{A}$ is the Simons' operation, defined by
\begin{align*}
\mathscr{A}(\vec{H}_g)=\sum_{i,j=1}^2\s{\vec{H}_g}{\vec{\I}(\vec{e}_i,\vec{e}_j)}\vec{\I}(\vec{e},\vec{e}_j)
\end{align*}
if $(\vec{e}_1,\vec{e}_2)$ is a orthonormal frame on $\phi_{\ast}(T\Sigma)$ (we recall the shift of notation $\e_i=e^{-\lambda}\d{i}\phi$).
In dimension $3$, the equation can sometimes be written in a simpler way if $\phi(\Sigma)$ has a trivial normal bundle. Indeed, in this case, we can define up to the sign of a normal vector-field, the scalar mean curvature $H_g$, defined by $\vec{H}_g=H_g\n$. This gives
\begin{align*}
\Delta_g^{\n}\vec{H}_g&=\left((\Delta_gH_g)\n+H_g(\D_{\vec{e}_1}^2\n+\D_{\vec{e}_2}^2\n)\right)^\perp\\
&
=(\Delta_g H_g)\n-H_g\left(|\D_{\vec{e}_1}\n|^2+|\D_{\vec{e}_2}\n|^2\right)\n\\
&=\left((\Delta_g H_g)-|\vec{\I}_g|^2H_g\right)\n.
\end{align*}
Then
\begin{align*}
\mathscr{A}(\vec{H_g})=|\vec{\I}_g|^2H_g\n
\end{align*}
and finally,
\begin{align*}
\Delta^{\n}_g\vec{H}_g-2|\vec{H}_g|^2\vec{H}_g+2\mathscr{A}(\vec{H}_g)+\mathscr{R}^\perp(\vec{H}_g)&=\left(\Delta_gH_g-2H_g^3+|\vec{\I}_g|^2H_g+\mathrm{Ric}(\n,\n)H_g\right)\n\\
&=\left(\Delta_gH_g-2H_g^3+(4H_g^2-2K_g)H_g+\mathrm{Ric}(\n,\n)H_g\right)\n\\
&=\left(\Delta_gH_g+2H_g(H_g^2-K_g)+\mathrm{Ric}(\n,\n)H_g\right)\n
\end{align*}
which finally gives
\begin{equation}
\Delta_gH_g+2H_g(H_g^2-K_g)+\mathrm{Ric}(\n,\n)H_g=0
\end{equation}
if $\mathrm{Ric}$ is the Ricci curvature of $(N^3,h)$.

\subsection{Second variation of $W$}

Let $\phi:\Sigma\rightarrow N^3$ a smooth critical point of $W$. Then the second variation of $W$ is well-defined, and does not depend on the variation $\phi_t$ such that 
\begin{align*}
	\w=\left(\frac{d}{dt}\phi_t\right)_{|t=0}.
\end{align*}
Therefore we choose a variation $\phi_t$ such that
\begin{align*}
	\D_{\frac{d}{dt}}\frac{d}{dt}\phi_t=0.
\end{align*}
and we abbreviate this expression by abuse of notation as $\D_{\w}\w=0$.

\subsubsection{Second variation of the metric}

We split the preliminary computation into two lemmas.
\begin{lemme}\label{lemme3}
	Let $1\leq i,j\leq 2$. We have 
	\begin{align}
		\left(\frac{d^2}{dt^2}g_{i,j}\right)_{|t=0}&=2\s{\D_{\e_i}\w}{\D_{\e_j}\w}-2\s{R(\e_i,\w)\w}{\e_j},\\
		\left(\frac{d^2}{dt^2}g^{i,j}\right)_{|t=0}&=2e^{-2\lambda}\left(-\s{\D_{\e_i}\w}{\D_{\e_j}\w}_g+\s{R(\e_i,\w)\w}{\e_j}_g\right)\nonumber\\
		&+4e^{-2\lambda}\left(\s{\D_{\e_i}\w}{\e_j}_g+\s{\D_{\e_j}\w}{\e_i}_g\right)\left(\s{\D_{\e_1}\w}{\e_1}_g+\s{\D_{\e_2}\w}{\e_2}_g\right)\nonumber\\
		&-2\delta_{i,j} e^{-2\lambda}\left(4e^{-2\lambda}\s{\D_{\e_1}\w}{\e_1}\s{\D_{\e_2}\w}{\e_2}_g-\left(\s{\D_{\e_1}\w}{\e_1}_g+\s{\D_{\e_2}\w}{\e_2}_g\right)^2\right),\\
		\left(\frac{d^2}{dt^2}d \mathrm{vol}_{g_t}\right)_{|t=0}&=\bigg(|d\w|_g^2-\mathscr{R}_2(\w,\w)-2\s{\D_{\e_1}\w}{\e_1}_g^2-2\s{\D_{\e_2}\w}{\e_2}_g^2\nonumber\\
		&\quad\quad\quad\quad\quad\quad\quad\quad\quad-\left(\s{\D_{\e_1}\w}{\e_2}_g+\s{\D_{\e_2}\w}{\e_1}_g\right)^2\bigg)d\vg.
	\end{align}
	where
	\begin{align*}
		\mathscr{R}_2(\w,\w)=e^{-2\lambda}\sum_{i=1}^{2}\s{R(\e_i,\w)\w}{\e_i}.
	\end{align*}
\end{lemme}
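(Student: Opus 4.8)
The plan is to compute $\left(\frac{d^2}{dt^2}g_{i,j}\right)_{|t=0}$ directly from the first-variation formula of Lemma \ref{lemme1}, differentiate once more in $t$, use the curvature convention to commute $\D_{\frac{d}{dt}}$ with $\D_{\e_i}$, and then deduce the formulas for $g^{i,j}$ and $d\vg$ by elementary algebra (differentiating the explicit $2\times 2$ expressions). Throughout I use the chosen variation with $\D_{\w}\w=0$, which kills all terms that would otherwise involve the second derivative of the map $\phi_t$ itself; this is why only a curvature term survives.

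\medskip

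\textbf{Step 1: second variation of $g_{i,j}$.} Starting from $\frac{d}{dt}g_{i,j}=\s{\D_{\e_i}\w_t}{\d j\phi_t}+\s{\D_{\e_j}\w_t}{\d i\phi_t}$, where $\w_t=\D_{\frac{d}{dt}}\phi_t$, I differentiate once more using metric-compatibility:
\begin{align*}
\frac{d}{dt}\s{\D_{\e_i}\w_t}{\d j\phi_t}=\s{\D_{\frac{d}{dt}}\D_{\e_i}\w_t}{\d j\phi_t}+\s{\D_{\e_i}\w_t}{\D_{\e_j}\w_t}.
\end{align*}
For the first term I commute derivatives: $\D_{\frac{d}{dt}}\D_{\e_i}\w_t=\D_{\e_i}\D_{\frac{d}{dt}}\w_t+R(\w_t,\e_i)\w_t$, and at $t=0$ the term $\D_{\e_i}\D_{\w}\w$ vanishes by our choice of variation. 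Symmetrizing in $i,j$ and using $\s{R(\w,\e_i)\w}{\e_j}=-\s{R(\e_i,\w)\w}{\e_j}$ gives the stated formula. This is the only genuinely new computation; the rest is bookkeeping.

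\medskip

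\textbf{Step 2: second variation of $g^{i,j}$.} Since $g^{i,k}g_{k,j}=\delta^i_j$, differentiating twice yields $\frac{d^2}{dt^2}g^{i,j}=-g^{i,k}\big(\frac{d^2}{dt^2}g_{k,l}\big)g^{l,j}+2g^{i,k}\big(\frac{d}{dt}g_{k,l}\big)g^{l,m}\big(\frac{d}{dt}g_{m,n}\big)g^{n,j}$. Plugging in Lemma \ref{lemme1}, Step 1, and evaluating at $t=0$ with $g^{i,j}=e^{-2\lambda}\delta^{i,j}$ (in the orthonormal shift of notation $\e_i=e^{-\lambda}\d i\phi$, this is just $\delta^{i,j}$) produces the displayed three-line expression after expanding the quadratic correction term; the $\delta_{i,j}$ piece records the contribution of the off-diagonal entries of the first-order variation.

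\medskip

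\textbf{Step 3: second variation of the volume form.} Write $d\mathrm{vol}_{g_t}=\sqrt{\det g_t}\,dx_1\wedge dx_2$ and use $\frac{d}{dt}\log\det g_t=g^{i,j}\frac{d}{dt}g_{i,j}$, so that
\begin{align*}
\frac{d^2}{dt^2}\sqrt{\det g_t}=\tfrac12\sqrt{\det g_t}\Big(\big(g^{i,j}\tfrac{d}{dt}g_{i,j}\big)^2+\tfrac{d}{dt}\big(g^{i,j}\big)\tfrac{d}{dt}g_{i,j}+g^{i,j}\tfrac{d^2}{dt^2}g_{i,j}\Big).
\end{align*}
Inserting the first variations from Lemma \ref{lemme1} and the second variation from Step 1, evaluating at $t=0$, and collecting terms: the $g^{i,j}\frac{d^2}{dt^2}g_{i,j}$ term contributes $|d\w|_g^2-\mathscr{R}_2(\w,\w)$, while the remaining products assemble into $\big(\s{d\phi}{d\w}_g\big)^2-2\s{\D_{\e_1}\w}{\e_1}_g^2-2\s{\D_{\e_2}\w}{\e_2}_g^2-(\s{\D_{\e_1}\w}{\e_2}_g+\s{\D_{\e_2}\w}{\e_1}_g)^2$, and one checks $\big(\s{d\phi}{d\w}_g\big)^2=(\s{\D_{\e_1}\w}{\e_1}_g+\s{\D_{\e_2}\w}{\e_2}_g)^2$ cancels against part of the square terms to leave exactly the stated formula.

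\medskip

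\textbf{Main obstacle.} The only conceptual point is the derivative commutation $\D_{\frac{d}{dt}}\D_{\e_i}\w=\D_{\e_i}\D_{\frac{d}{dt}}\w+R(\w,\e_i)\w$ together with the sign/index convention for $R$; everything else is careful algebra with $2\times 2$ matrices, and the principal risk is a bookkeeping error in the expansion of $\frac{d^2}{dt^2}g^{i,j}$ and in the cancellations producing the volume-form formula. I would organize the volume computation so that the curvature contribution $-\mathscr{R}_2(\w,\w)$ is isolated first (it comes solely from Step 1), which makes the remaining purely first-order terms easier to track.
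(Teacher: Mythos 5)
Your Step 1 is exactly the paper's argument: differentiate the first variation once more, use $\D_{\frac{d}{dt}}\D_{\e_i}\w=\D_{\e_i}\D_{\frac{d}{dt}}\w+R(\w,\e_i)\w$, kill $\D_{\e_i}\D_{\w}\w$ by the choice of variation, and symmetrize. Your Step 2, based on $(g^{-1})''=-g^{-1}g''g^{-1}+2\,g^{-1}g'g^{-1}g'g^{-1}$, is a legitimate and in fact cleaner route than the paper's term-by-term differentiation of the cofactor formula $g^{i,j}=(-1)^{i+j}g_{i+1,j+1}/\det g_t$; at $t=0$, with $g=e^{2\lambda}\delta$, the two reduce to the same algebra. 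One caution: carried out carefully, your identity produces for $i=j$ a cross term proportional to $\left(\s{\D_{\e_1}\w}{\e_2}+\s{\D_{\e_2}\w}{\e_1}\right)^2$, whereas the displayed $\delta_{i,j}$-term involves $\left(\s{\D_{\e_1}\w}{\e_1}_g+\s{\D_{\e_2}\w}{\e_2}_g\right)^2$; these are not equal in general, so your claim of exact agreement with the display should be checked rather than asserted.

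The genuine gap is in Step 3, in two places. First, your Leibniz formula is off by a factor: from $\frac{d}{dt}\sqrt{\det g_t}=\frac12\sqrt{\det g_t}\,\mathrm{tr}(g^{-1}g')$ one gets $\frac{d^2}{dt^2}\sqrt{\det g_t}=\frac12\sqrt{\det g_t}\left(\frac12\big(\mathrm{tr}(g^{-1}g')\big)^2+\mathrm{tr}\big((g^{-1})'g'\big)+\mathrm{tr}\big(g^{-1}g''\big)\right)$, i.e.\ the squared trace carries an extra $\frac12$ that your display omits. Second, and more seriously, the final ``cancellation'' you invoke does not exist. Writing $a=\s{\D_{\e_1}\w}{\e_1}_g$ and $b=\s{\D_{\e_2}\w}{\e_2}_g$, the intermediate expression you correctly arrive at contains $(a+b)^2-2a^2-2b^2=-(a-b)^2$, which equals $-2a^2-2b^2$ only when $a+b=\s{d\phi}{d\w}_g=0$. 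Thus what your computation actually yields differs from the lemma's display by $\s{d\phi}{d\w}_g^2\,d\vg$. A sanity check: for the dilation $\phi_t=(1+t)\phi$ of the unit sphere in $\R^3$ (an admissible normal variation with $w=1$), $d\mathrm{vol}_{g_t}=(1+t)^2d\vg$, so the second variation is $+2\,d\vg$, which is what your intermediate expression gives, while the displayed formula evaluates to $-2\,d\vg$. The discrepancy vanishes precisely for normal variations of minimal surfaces, the only case used later in the paper, so nothing downstream is affected; but as written, the last step of your Step 3 forces an identity that is false in general instead of closing the computation.
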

\begin{proof}
	As $\D_{\w}\w=0$, we have
	\begin{align*}
		\frac{d^2}{dt^2}g_{i,j}&=\s{\D_{\w}\D_{\e_i}\w}{\e_j}+\s{\D_{\e_i}\w}{\D_{\w}\e_j}+\s{\D_{\w}\D_{\e_j}\w}{\e_i}+\s{\D_{\e_j}\w}{\D_{\w}\e_i}\\
		&=\s{R(\w,\e_i)\w}{\e_j}+\s{\D_{\e_i}\w}{\D_{\e_j}\w}+\s{R(\w,\e_j)\w}{\e_i}+\s{\D_{\e_i}\w}{\D_{\e_j}\w}\\
		&=2\s{\D_{\e_i}\w}{\D_{\e_j}\w}-2\s{R(\e_i,\w)\w}{\e_j}.
	\end{align*}
	Then, we compute
	\begin{align*}
	\frac{d^2}{dt^2}\det g_t&=g_{1,1}''g_{2,2}+2g_{1,1}'g_{2,2}'+g_{1,1}g_{2,2}''-2(g_{1,2}')^2\\
	&=2e^{2\lambda}\left(|\D_{\e_1}\w|^2+|\D_{\e_2}\w|^2-\s{R(\e_1,\w)\w}{\e_1}-\s{R(\e_2,\w)\w}{\e_2}\right)\\
	&+2\left(4\s{\D_{\e_1}\w}{\e_1}\s{\D_{\e_2}\w}{\e_2}-(\s{\D_{\e_1}\w}{\e_2}+\s{\D_{\e_2}\w}{\e_1})^2\right).
	\end{align*}
	Now, 
	\begin{align*}
		&(-1)^{i+j}\frac{d^2}{dt^2}g^{i,j}=e^{-4\lambda}\left(\frac{d^2}{dt^2}g_{i+1,j+1}\right)+2\left(\frac{d}{dt}g_{i+1,j+1}\right)\left(\frac{-\frac{d}{dt}\det g_t}{(\det g_t)^2}\right)+e^{2\lambda}\delta_{i,j}\left(\frac{-\frac{d^2}{dt^2}\det g_t}{(\det g_t)^2}+\frac{2\left(\frac{d}{dt}\det g_t\right)^2}{(\det g_t)^3}\right)\\
		&=e^{-4\lambda}\left(2\s{\D_{\e_{i+1}}\w}{\D_{\e_{j+1}}\w}-2\s{R(\e_{i+1},\w)\w}{\e_{j+1}}\right)\\
		&+2\left(\s{\D_{\e_{i+1}}\w}{\e_{j+1}}+\s{\D_{\e_{j+1}}\w}{\e_{i+1}}\right)(-e^{-8\lambda}(2e^{2\lambda}(\s{\D_{\e_1}\w}{\e_1}+\s{\D_{\e_2}\w}{\e_2})))\\
		&+e^{2\lambda}\delta_{i,j}\bigg\{-e^{-8\lambda}\Big(2e^{2\lambda}\left(|\D_{\e_1}\w|^2+|\D_{\e_2}\w|^2-\s{R(\e_1,\w)\w}{\e_1}-\s{R(\e_2,\w)\w}{\e_2}\right)\\
		&+2\left(4\s{\D_{\e_1}\w}{\e_1}\s{\D_{\e_2}\w}{\e_2}-(\s{\D_{\e_1}\w}{\e_2}+\s{\D_{\e_2}\w}{\e_1})^2\right)\Big)
		+2e^{-12\lambda}(4e^{4\lambda}\left(\s{\D_{\e_1}\w}{\e_1}+\s{\D_{\e_2}\w}{\e_2}\right)^2)\bigg\}.
	\end{align*}
	so for $(i,j)=(1,2)$, we have
	\begin{align*}
		-\frac{d^2}{dt^2}g_{1,2}&=2e^{-4\lambda}\left(\s{\D_{\e_1}\w}{\D_{\e_2}\w}-\s{R(\e_1,\w)\w}{\e_2}\right)-4e^{-6\lambda}\left(\s{\D_{\e_1}\w}{\e_2}+\s{\D_{\e_2}\w}{\e_1}\right)\left(\s{\D_{\e_1}\w}{\e_1}+\s{\D_{\e_2}\w}{\e_2}\right)
	\end{align*}
	and for $i=j$, writing $\D_i=e^{-\lambda}\D_{\e_i}$,
	\begin{align*}
		\frac{d^2}{dt^2}g^{i,i}&=2e^{-4\lambda}\left(|\D_{\e_{i+1}}\w|^2-\s{R(\e_{i+1},\w)\w}{\e_{i+1}}\right)-8e^{-6\lambda}\s{\D_{\e_{i+1}}\w}{\e_{i+1}}\left(\s{\D_{\e_1}\w}{\e_1}+\s{\D_{\e_2}\w}{\e_2}\right)\\
		&-2e^{-4\lambda}\left(|\D_{\e_1}\w|^2+|\D_{\e_2}\w|^2-\s{R(\e_1,\w)\w}{\e_1}-\s{R(\e_2,\w)\w}{\e_2}\right)\\
		&-e^{2\lambda}\left(4\s{\D_{\e_1}\w}{\e_1}\s{\D_{\e_2}\w}{\e_2}-\left(\s{\D_{\e_1}\w}{\e_1}+\s{\D_{\e_2}\w}{\e_2}\right)^2\right)+8e^{-6\lambda}\left(\s{\D_{\e_1}\w}{\e_1}+\s{\D_{\e_2}\w}{\e_2}\right)^2\\
		&=2e^{-4\lambda}\left(-|\D_{\e_i}\w|^2+\s{R(\e_i,\w)\w}{\e_i}\right)+8e^{-6\lambda}\left(\s{\D_{\e_i}\w}{\e_i}^2+\s{\D_{\e_1}\w}{\e_1}\s{\D_{\e_2}\w}{\e_2}\right)\\
		&-2e^{-6\lambda}\left(4\s{\D_{\e_1}\w}{\e_1}\s{\D_{\e_2}\w}{\e_2}-\left(\s{\D_{\e_1}\w}{\e_1}+\s{\D_{\e_2}\w}{\e_2}\right)^2\right)
	\end{align*}
	which gives the result by conformity, so
	\begin{align*}
		\frac{d^2}{dt^2}g^{i,j}&=2e^{-2\lambda}\left(-\s{\D_{\e_i}\w}{\D_{\e_j}\w}+\s{R(\e_i,\w)\w}{\e_j}\right)+4e^{-2\lambda}\left(\s{\D_{\e_i}\w}{\e_j}+\s{\D_{\e_j}\w}{\e_i}\right)\left(\s{\D_{\e_1}\w}{\e_1}+\s{\D_{\e_2}\w}{\e_2}\right)\\
		&-2\delta_{i,j} e^{-2\lambda}\left(4\s{\D_{\e_1}\w}{\e_1}\s{\D_{\e_2}\w}{\e_2}-\left(\s{\D_{\e_1}\w}{\e_1}+\s{\D_{\e_2}\w}{\e_2}\right)^2\right)
	\end{align*}
	which ends the proof of the lemma.
\end{proof}

\subsubsection{Second variation of the second fundamental form}

\begin{lemme}
	We have for $1\leq i,j\leq 2$
	\begin{align*}
	&\D_{\frac{d}{dt}}^\perp\D_{\frac{d}{dt}}^\perp \vec{\I}_{i,j}=\bigg(R(\w,\e_i)\D_{\e_j}\w+\D_{\e_i}R(\w,\e_j)\w+R(\D_{\e_i}\w,\e_j)\w+R(\w,\vec{\I}_{i,j})\w+R(\w,\e_j)\D_{\e_i}\w\\
	&+\D_{\w}R(\w,\e_i)\e_j+R(\w,\D_{\e_i}\w)\e_j+R(\w,\e_i)\D_{\e_j}\w
	-\sum_{k=1}^2\s{(\D_{\e_i}\D_{\e_j}-\D_{(\D_{\e_i}\e_j)^\top})\w+R(\w,\e_i)\e_j}{\e_k}\D_{\e_k}\w\bigg)^\perp.
	\end{align*}
\end{lemme}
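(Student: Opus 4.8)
The plan is to obtain the identity by differentiating, once more in the variation parameter $t$, the first-variation formula of Lemma~\ref{lemme2}, and then evaluating at $t=0$, where the chart is conformal, $(\e_1,\e_2)$ is orthonormal and $\D_{\frac{d}{dt}}\w=0$. The key observation is that the proof of Lemma~\ref{lemme2} uses only $\D_{\frac{d}{dt}}^\perp\n_t=0$, the formula of Lemma~\ref{lemmeortho} for $\D_{\frac{d}{dt}}\n_t$, the identity \eqref{normalderivative}, and the commutation rule $\D_{\frac{d}{dt}}\D_{\e_i}\e_j=\D_{\e_i}\D_{\e_j}\w+R(\w,\e_i)\e_j$, none of which uses conformality; hence Lemma~\ref{lemme2} holds for every $t\in I$, and, setting
\[
\vec{X}_{i,j}:=\left(\D_{\e_i}\D_{\e_j}-\D_{(\D_{\e_i}\e_j)^\top}\right)\w+R(\w,\e_i)\e_j,
\]
we may write $\D_{\frac{d}{dt}}^\perp\vec{\I}_{i,j}=\left(\vec{X}_{i,j}\right)^\perp$ as an identity of $t$-dependent sections of $\phi_t^\ast TN$.

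Next I would apply $\D_{\frac{d}{dt}}^\perp$ once more. Since $\D_{\frac{d}{dt}}^\perp\vec{\I}_{i,j}=(\vec{X}_{i,j})^\perp$, the identity \eqref{normalderivative} with $\vec{v}_t=\vec{X}_{i,j}$ gives immediately
\[
\D_{\frac{d}{dt}}^\perp\D_{\frac{d}{dt}}^\perp\vec{\I}_{i,j}=\D_{\frac{d}{dt}}^\perp\vec{X}_{i,j}-\D_{(\vec{X}_{i,j})^\top}^\perp\w,
\]
and, $(\e_1,\e_2)$ being orthonormal at $t=0$, one has $\D_{(\vec{X}_{i,j})^\top}^\perp\w=\big(\sum_{k=1}^2\s{\vec{X}_{i,j}}{\e_k}\D_{\e_k}\w\big)^\perp$, which is exactly the last term of the asserted formula. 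It then remains to compute $\D_{\frac{d}{dt}}^\perp\vec{X}_{i,j}=(\D_{\frac{d}{dt}}\vec{X}_{i,j})^\perp$ term by term. For $\D_{\frac{d}{dt}}\D_{\e_i}\D_{\e_j}\w$: commuting $\D_{\frac{d}{dt}}$ through $\D_{\e_i}$ produces $R(\w,\e_i)\D_{\e_j}\w$, then $\D_{\frac{d}{dt}}\D_{\e_j}\w=\D_{\e_j}\D_{\frac{d}{dt}}\w+R(\w,\e_j)\w=R(\w,\e_j)\w$ since $\D_{\frac{d}{dt}}\w=0$, and the Leibniz rule expands $\D_{\e_i}\big(R(\w,\e_j)\w\big)$ into $(\D_{\e_i}R)(\w,\e_j)\w+R(\D_{\e_i}\w,\e_j)\w+R(\w,\D_{\e_i}\e_j)\w+R(\w,\e_j)\D_{\e_i}\w$. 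For $\D_{\frac{d}{dt}}\big(R(\w,\e_i)\e_j\big)$: using $\D_{\frac{d}{dt}}R=\D_{\w}R$ together with $\D_{\frac{d}{dt}}\w=0$ and $\D_{\frac{d}{dt}}\e_k=\D_{\e_k}\w$ one obtains $(\D_{\w}R)(\w,\e_i)\e_j+R(\w,\D_{\e_i}\w)\e_j+R(\w,\e_i)\D_{\e_j}\w$. Finally, for $-\D_{\frac{d}{dt}}\D_{(\D_{\e_i}\e_j)^\top}\w$, I would write $(\D_{\e_i}\e_j)^\top=\D_{\e_i}\e_j-\vec{\I}_{i,j}$, so that the curvature contribution $-R(\w,(\D_{\e_i}\e_j)^\top)\w$ combines with $R(\w,\D_{\e_i}\e_j)\w$ above into the single term $R(\w,\vec{\I}_{i,j})\w$; the remaining pieces, which come from the $t$-derivative of the tangential vector $(\D_{\e_i}\e_j)^\top$, are tangential, and after the outer projection $(\cdot)^\perp$ they are reorganized, together with the term coming from \eqref{normalderivative}, into the displayed sum. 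Collecting everything and projecting onto $\phi_\ast(T\Sigma)^\perp$ yields the identity.

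The step I expect to be the main obstacle is precisely this last one: differentiating in $t$ the $t$-dependent tangential vector field $(\D_{\e_i}\e_j)^\top=\pi_{\phi_t}(\D_{\e_i}\e_j)$, in which both the vector $\D_{\e_i}\e_j$ and the orthogonal projection $\pi_{\phi_t}$ vary with $t$, the latter through the Weingarten map. Handling it cleanly requires the explicit form of $\D_{\frac{d}{dt}}\n_t$ from Lemma~\ref{lemmeortho} (equivalently, $\D_{\frac{d}{dt}}^\perp\n_t=0$) and a careful check that, once projected onto the normal bundle, every term not among those displayed is purely tangential, hence either disappears or is reabsorbed into the sum $\sum_{k=1}^2\s{\vec{X}_{i,j}}{\e_k}\D_{\e_k}\w$. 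Apart from that, the computation is a routine, if lengthy, application of the Leibniz rule and of the curvature commutation identity $\D_{\frac{d}{dt}}\D_{\e_k}-\D_{\e_k}\D_{\frac{d}{dt}}=R(\w,\e_k)$.
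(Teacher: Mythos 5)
Your overall strategy coincides with the paper's: you differentiate the first-variation identity $\D_{\frac{d}{dt}}^\perp\vec{\I}_{i,j}=(\vec{X}_{i,j})^\perp$ once more, use \eqref{normalderivative} with $\vec{v}_t=\vec{X}_{i,j}$ to produce the final sum $-\sum_k\s{\vec{X}_{i,j}}{\e_k}\D_{\e_k}\w$, commute $\D_{\frac{d}{dt}}$ past $\D_{\e_i}$ and $\D_{\e_j}$ at the price of curvature terms, exploit $\D_{\frac{d}{dt}}\w=0$, expand $\D_{\e_i}\left(R(\w,\e_j)\w\right)$ and $\D_{\frac{d}{dt}}\left(R(\w,\e_i)\e_j\right)$ by the Leibniz rule, and recombine $R(\w,\D_{\e_i}\e_j)\w-R(\w,(\D_{\e_i}\e_j)^\top)\w=R(\w,\vec{\I}_{i,j})\w$. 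All of this matches the paper's computation step for step.

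The genuine gap is exactly at the step you single out as the main obstacle, and the resolution you sketch for it is not correct. After extracting the curvature contribution $R(\w,(\D_{\e_i}\e_j)^\top)\w$ from $\D_{\frac{d}{dt}}\D_{(\D_{\e_i}\e_j)^\top}\w$, the leftover is $\D_{(\D_{\e_i}\e_j)^\top}\D_{\frac{d}{dt}}\w+\D_{\lie{\frac{d}{dt}}{(\D_{\e_i}\e_j)^\top}}\w$. The first piece vanishes because $\D_{\frac{d}{dt}}\w=0$, but the second is not a collection of "tangential pieces" that die under $(\cdot)^\perp$: the bracket $\lie{\frac{d}{dt}}{(\D_{\e_i}\e_j)^\top}$ is indeed a tangential vector (only the $t$-derivatives of the coefficients of the projection survive, since the coordinate fields $\e_k$ and $\w$ commute), yet $\D_{V}\w$ for tangential $V$ has a nontrivial normal component, so this term survives the outer projection. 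It also cannot be "reabsorbed" into the displayed sum, because that sum is already entirely accounted for by the term $-\D^\perp_{(\vec{X}_{i,j})^\top}\w$ supplied by \eqref{normalderivative}; there is no slack left, and your accounting as written would either drop this contribution or double-count the sum. The paper instead disposes of it by computing the Lie bracket $\lie{(\D_{\e_i}\e_j)^\top}{\w}$ explicitly and concluding that the resulting covariant derivative is a multiple of $\D_{\w}\w$, hence zero for the chosen variation. Whichever way one organizes the computation, an explicit argument that this bracket correction vanishes (or an exact bookkeeping of it) is required, and it is missing from your proposal.
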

\begin{proof}
We first recall that
\begin{align*}
	\D_{\frac{d}{dt}}^\perp\vec{\I}_{i,j}=\left((\D_{\e_i}\D_{\e_j}-\D_{\left(\D_{\e_i}\e_j\right)^\top})\w+R(\w,\e_i)\e_j\right)^\perp.
\end{align*}
Remembering \eqref{normalderivative}, we compute first
\begin{align*}
(\D_{\frac{d}{dt}}^{\perp}\D_{\e_i}\D_{\e_j}\w)^\perp&=R(\w,\e_i)\D_{\e_j}\w+\D_{\e_i}\left(\D_{\frac{d}{dt}}\D_{\e_j}\w\right)\\
&=R(\w,\e_i)\D_{\e_j}\w+\D_{\e_i}\left(R(\w,\e_j)\w+\D_{\e_j}\D_{\frac{d}{dt}}\w\right)\\
&=R(\w,\e_i)\D_{\e_j}\w+\D_{\e_i}\left(R(\w,\e_j)\w\right)\\
&=R(\w,\e_i)\D_{\e_j}\w+\D_{\e_i}R(\w,\e_j)\w+R(\D_{\e_i}\w,\e_j)\w+R(\w,\D_{\e_i}\e_j)\w+R(\w,\e_j)\D_{\e_i}\w
\end{align*}
Then,
\begin{align*}
\D_{\frac{d}{dt}}^\perp\D_{(\D_{\e_i}\e_j)^\top}\w&=R(\w,(\D_{\e_i}\e_j)^\top)\w+\D_{(\D_{\e_i}\e_j)^\top}\D_{\w}\w+\D_{\lie{(\D_{\e_i}\e_j)^\top}{\w}}\w\\
&=R(\w,(\D_{\e_i}\e_j)^\top)\w
\end{align*}
as at $t=0$,
\begin{align*}
	\lie{(\D_{\e_i}\e_j)^\top}{\w}&=\lie{\s{\D_{\e_i}\e_j}{\e_1}\e_1+\s{\D_{\e_i}\e_j}{\e_2}\e_2}{\w}\\
	&=\s{\D_{\e_i}\e_j}{\e_1}\lie{\e_1}{\w}+\s{\D_{\e_i}\e_j}{\e_2}\s{\e_2}{\w}
	+\left(d(\s{\D_{\e_i}\e_j}{\e_1})\cdot\e_1+d(\s{\D_{\e_i}\e_j}{\e_2})\cdot \e_2\right)\w\\
	&=\left(d(\s{\D_{\e_i}\e_j}{\e_1})\cdot\e_1+d(\s{\D_{\e_i}\e_j}{\e_2})\cdot \e_2\right)\w
\end{align*}
so
\begin{align*}
	\D_{\lie{(\D_{\e_i}\e_j)^\top}{\w}}\w=\left(d(\s{\D_{\e_i}\e_j}{\e_1})\cdot\e_1+d(\s{\D_{\e_i}\e_j}{\e_2})\cdot \e_2\right)\D_{\w}\w=0.
\end{align*}
and
\begin{align*}
\left(\D_{\frac{d}{dt}}\left((\D_{\e_i}\D_{\e_j}-\D_{(\D_{\e_i}\e_j)^\top})\w\right)\right)^\perp&=\bigg(R(\w,\e_i)\D_{\e_j}\w+\D_{\e_i}R(\w,\e_j)\w+R(\D_{\e_i}\w,\e_j)\w\\
&\quad\quad\quad\quad\quad\quad\quad\quad\quad+R(\w,\vec{\I}_{i,j})\w+R(\w,\e_j)\D_{\e_i}\w\bigg)^\perp
\end{align*}
while 
\begin{align*}
\D_{\frac{d}{dt}}\left(R(\w,\e_i)\e_j\right)=\D_{\w}R(\w,\e_i)\e_j+R(\w,\D_{\e_i}\w)\e_j+R(\w,\e_i)\D_{\e_j}\w
\end{align*}

so we get finally
\begin{align*}
&\D_{\frac{d}{dt}}^\perp\D_{\frac{d}{dt}}^\perp \vec{\I}_{i,j}=\bigg(R(\w,\e_i)\D_{\e_j}\w+\D_{\e_i}R(\w,\e_j)\w+R(\D_{\e_i}\w,\e_j)\w+R(\w,\vec{\I}_{i,j})\w+R(\w,\e_j)\D_{\e_i}\w\\
&+\D_{\w}R(\w,\e_i)\e_j+R(\w,\D_{\e_i}\w)\e_j+R(\w,\e_i)\D_{\e_j}\w
-\sum_{k=1}^2\s{(\D_{\e_i}\D_{\e_j}-\D_{(\D_{\e_i}\e_j)^\top})\w+R(\w,\e_i)\e_j}{\e_k}\D_{\e_k}\w\bigg)^\perp
\end{align*}
which concludes the proof.
\end{proof}

\subsubsection{Second variation of the mean curvature}

Now, making as earlier the shift of notation $\e_i=e^{-\lambda}\d{i}\phi$,
\begin{align*}
&2\D_{\frac{d}{dt}}^\perp\D_{\frac{d}{dt}}^\perp \vec{H}_{g_t}=\sum_{i,j=1}^2\left(\frac{d^2}{dt^2}g^{i,j}\right)\vec{\I}_{i,j}+2\left(\frac{d}{dt}g^{i,j}\right)\left(\D_{\frac{d}{dt}}^\perp \vec{\I}_{i,j}\right)+\delta_{i,j}e^{-2\lambda}\D_{\frac{d}{dt}}^\perp\D_{\frac{d}{dt}}^\perp\vec{\I}_{i,j}\\
&=\sum_{i,j=1}^2\bigg\{\left(2\left(-\s{\D_{\e_i}\w}{\D_{\e_j}\w}+\s{R(\e_i,\w)\w}{\e_j}\right)\right)\vec{\I}(\e_i,\e_j)\\
&+4\left(\left(\s{\D_{\e_i}\w}{\e_j}+\s{\D_{\e_j}\w}{\e_i}\right)\left(\s{\D_{\e_1}\w}{\e_1}+\s{\D_{\e_2}\w}{\e_2}\right)\right)\vec{\I}(\e_i,\e_j)\\
&-2\left(\s{\D_{\e_i}\w}{\e_j}+\s{\D_{\e_j}\w}{\e_i}\right)\left((\D_{\e_i}\D_{\e_j}-\D_{(\D_{\e_i}\e_j)^\top})\w+R(\w,\e_i)\e_j\right)^\perp\bigg\}\\
&-\sum_{i=1}^22\left(4\s{\D_{\e_1}\w}{\e_1}\s{\D_{\e_2}\w}{\e_2}-\left(\s{\D_{\e_1}\w}{\e_2}+\s{\D_{\e_1}\w}{\e_2}\right)^2\right)\vec{\I}(\e_i,\e_j)\\
&+\sum_{i=1}^2\bigg(R(\w,\e_i)\D_{\e_i}\w+\D_{\e_i}R(\w,\e_i)\w+R(\D_{\e_i}\w,\e_i)\w+R(\w,\vec{\I}(\e_i,\e_i))\w+R(\w,\e_i)\D_{\e_i}\w\\
&+\D_{\w}R(\w,\e_i)\e_i+R(\w,\D_{\e_i}\w)\e_i+R(\w,\e_i)\D_{\e_i}\w
-\sum_{k=1}^2\s{(\D_{\e_i}\D_{\e_i}-\D_{(\D_{\e_i}\e_i)^\top})\w+R(\w,\e_i)\e_i}{\e_k}\D_{\e_k}\w\bigg)^\perp
\end{align*}
Then
\begin{align*}
\frac{d^2}{dt^2}|\vec{H}_g|^2=2\s{\D_{\frac{d}{dt}}^\perp\D_{\frac{d}{dt}}^\perp\vec{H}_g}{H_g}+2\left|\D_{\frac{d}{dt}}^\perp\vec{H}_g\right|^2
\end{align*}
and
\begin{align*}
\frac{d^2}{dt^2}\int_{\Sigma}|H_{g_t}|^2d\mathrm{vol}_{g_t}&=\int_{\Sigma}2\s{\D_{\frac{d}{dt}}^\perp\D_{\frac{d}{dt}}^\perp\vec{H}_g}{H_g}+2\left|\D_{\frac{d}{dt}}^\perp\vec{H}_g\right|^2d\vg\\
&+4\int_{\Sigma}\s{\D_{\frac{d}{dt}}^\perp H_g}{\vec{H}_g}\left(\frac{d}{dt}d\vg\right)
+\int_{\Sigma}|\vec{H}_g|^2\left(\frac{d^2}{dt^2}d\vg\right).
\end{align*}
As
\begin{align*}
	\D_{\frac{d}{dt}}^\perp H_{g_t}=\frac{1}{2}\left(\Delta^{\n}_g\w+\mathscr{R}^\perp(\w)-\sum_{i,j=1}^2\left(\s{\D_{\e_i}\w}{\e_j}+\s{\D_{\e_j}\w}{\e_i}\right)\vec{\I}_{i,j}\right)
\end{align*}
we get
\begin{align}
	&D^2W(\phi)[\w,\w]=\int_{\Sigma}\bigg\langle\sum_{i,j=1}^2\bigg\{\left(2\left(-\s{\D_{\e_i}\w}{\D_{\e_j}\w}+\s{R(\e_i,\w)\w}{\e_j}\right)\right)\vec{\I}(\e_i,\e_j)\nonumber\\
	&+4\left(\s{\D_{\e_i}\w}{\e_j}+\s{\D_{\e_j}\w}{\e_i}\right)\left(\s{\D_{\e_1}\w}{\e_1}+\s{\D_{\e_2}\w}{\e_2}\right)\vec{\I}(\e_i,\e_j)\nonumber\\
		&-2\left(\s{\D_{\e_i}\w}{\e_j}+\s{\D_{\e_j}\w}{\e_i}\right)\left((\D_{\e_i}\D_{\e_j}-\D_{(\D_{\e_i}\e_j)^\top})\w+R(\w,\e_i)\e_j\right)^\perp\bigg\}\nonumber\\
		&-\sum_{i=1}^22\left(4\s{\D_{\e_1}\w}{\e_1}\s{\D_{\e_2}\w}{\e_2}-\left(\s{\D_{\e_1}\w}{\e_2}+\s{\D_{\e_1}\w}{\e_2}\right)^2\right)\vec{\I}(\e_i,\e_i)\nonumber\\
		&+\sum_{i=1}^2\bigg(R(\w,\e_i)\D_{\e_i}\w+\D_{\e_i}R(\w,\e_i)\w+R(\D_{\e_i}\w,\e_i)\w+R(\w,\vec{\I}(\e_i,\e_i))\w+R(\w,\e_i)\D_{\e_i}\w\nonumber\\
		&+\D_{\w}R(\w,\e_i)\e_i+R(\w,\D_{\e_i}\w)\e_i+R(\w,\e_i)\D_{\e_i}\w\nonumber\\
		&
		-\sum_{k=1}^2\s{(\D_{\e_i}\D_{\e_i}-\D_{(\D_{\e_i}\e_i)^\top})\w+R(\w,\e_i)\e_i}{\e_k}\D_{\e_k}\w\bigg)^\perp,\vec{H}_g\bigg\rangle d\vg\nonumber\\
        &+\frac{1}{2}\int_{\Sigma}\Big|\Delta^{\n}_g\w+\mathscr{R}_1^\perp(\w)-\sum_{i,j=1}^2\left(\s{\D_{\e_i}\w}{\e_j}+\s{\D_{\e_j}\w}{\e_i}\right)\vec{\I}(\e_i,\e_j)\Big|^2d\vg\nonumber\\
        &+2\int_{\Sigma}\s{\Delta^{\n}_g\w+\mathscr{R}_1^\perp(\w)-\sum_{i,j=1}^2\left(\s{\D_{\e_i}\w}{\e_j}+\s{\D_{\e_j}\w}{\e_i}\right)\vec{\I}(\e_i,\e_j)}{\vec{H}_g}\s{\D\w}{d\phi}d\vg\nonumber\\
        &+\int_{\Sigma}|\vec{H}_g|^2\left(|\D\w|^2-\mathscr{R}_2(\w,\w)-2\s{\D_{\e_1}\w}{\e_1}^2-2\s{\D_{\e_2}\w}{\e_2}^2-\left(\s{\D_{\e_1}\w}{\e_2}+\s{\D_{\e_2}\w}{\e_1}\right)^2\right)d\vg.
\end{align}
where
\begin{align*}
	\mathscr{R}_1^{\perp}=\left(\sum_{i=1}^2 R(\w,\e_i)\e_i\right)^\perp,\quad \mathscr{R}_2(\w,\w)=\sum_{i=1}^2\s{R(\e_i,\w)\w}{\e_i}
\end{align*}
We remark that this formula makes sense for $\phi\in W^{2,2}(\Sigma,N^3)\cap W^{1,\infty}(\Sigma,N^3)$, and $\w\in W^{2,2}(\Sigma,TN^3)\cap W^{1,\infty}(\Sigma,TN^3)$. This formula does not use the fact the $N$ is $3$-dimensional, and as mentioned above, it remains valid in every $C^3$ Riemannian manifold $(N^n,h)$ (this regularity is necessary, as $\D R$ is only continuous in a $C^3$ manifold).

In particular, for a minimal surface the equation takes the form
\begin{align*}
D^2W(\phi)[\n,\n]=\frac{1}{2}\int_{\Sigma}\Big|\Delta^{\n}_g\w+\mathscr{R}_1^\perp(\w)-\sum_{i,j=1}^2\left(\s{\D_{\e_i}\w}{\e_j}+\s{\D_{\e_j}\w}{\e_i}\right)\vec{\I}(\e_i,\e_j)\Big|^2d\vg
\end{align*}
and this shows the obvious fact that a minimal surface, which is an absolute minimiser of the Willmore functional, is stable.
For a normal variation, $\textit{i.e.}$ such that $\w=\pi_{\n}(\w)$, we have
\begin{align*}
	\s{\D_{\e_i}\w}{\e_j}=-\s{\w}{\D_{\e_i}\e_j}=-\s{\w}{\vec{\I}(\e_i,\e_j)}
\end{align*}
so
\begin{align*}
	D^2W(\phi)[\w,\w]=\frac{1}{2}\int_{\Sigma}\Big|\Delta^{\n}_g\w+\mathscr{R}_1^\perp(\w)+2\mathscr{A}(\w)\Big|^2d\vg
\end{align*}
where $\mathscr{A}$ is Simon's operator, defined by
\begin{align*}
	\A(\w)=\sum_{i,j=1}^{2}\s{\w}{\vec{\I}(\e_i,\e_j)}\vec{\I}(\e_i,\e_j).
\end{align*}
In the case of a surface with trivial normal bundle, this equation gets even simpler, as there exists $w\in W^{2,2}(\Sigma,\R)$, such that $\w=w\n$, and
\begin{equation}\label{d2minfacile}
	D^2W(\phi)[\w,\w]=\frac{1}{2}\int_{\Sigma}^{}\left(\Delta_gw+(|\vec{\I}_g|^2+\mathrm{Ric}_{N^3}(\n,\n))w\right)^2d\vg.
\end{equation}
Indeed, if $\D=\phi^\ast \D$ is the tangent connection defined on $\Sigma$, then we define the second order operator $\bar{\D}_{i,j}^2$ (see \cite{federer}, 5.4.12 for example), acting on smooth function on $\Sigma$, such that
\begin{align*}
	\bar{\D}_{i,j}^2=\bar{\D}_{\e_i}\bar{\D}_{\e_j}-\bar{\D}_{\bar{\D}_{\e_i}\e_j}
\end{align*}
If $f\in C^2(\Sigma)$, then we get the following expression for the laplacian $\Delta_g$ on $\Sigma$
\begin{align*}
	\Delta_gf=\mathrm{Tr}\bar{\D}^2f=\sum_{i=1}^2\bar{\D}_{i,i}^2f.
\end{align*}
We deduce that for a normal variation $\w=w\n$, as $(\D_{\e_i}\e_j)^\top=\bar{\D}_{\e_i}\e_j$, we have as $\n$ is a unit vector-field,
\begin{align*}
	\left(\left(\D_{\e_i}\D_{\e_j}-\D_{(\D_{\e_i}\e_j)^\top}\right)\w\right)^\perp&=\left(\bar{\D}^2_{i,j}w\right)\n+w\s{\D_{\e_i}\D_{\e_j}\n}{\n}\n\\
	&=\left(\bar{\D}^2_{i,j}w-w\s{\D_{\e_i}\n}{\D_{\e_j}\n}\right)\n\\
	&=\left(\bar{\D}^2_{i,j}w-we^{-4\lambda}\left(\I_{i,1}\I_{j,1}+\I_{i,2}\I_{j,2}\right)\right)\n
\end{align*}
so
\begin{align*}
	\Delta_g^{\n}\w=(\Delta_gw+w^2|\I_g|^2)\n
\end{align*}
while
\begin{align*}
	\mathscr{R}_2(\w,\w)=w^2\sum_{i=1}^2\s{R(\e_i,\n)\n}{\e_i}=w^2\,\mathrm{Ric}_{N^3}(\n,\n)
\end{align*}
So we also have
\begin{align}\label{d2vol}
	\frac{d^2}{dt^2}d\vg&=\left(|dw|_g^2+w^2|\vec{\I}_g|^2-w^2\,\mathrm{Ric}_{N^3}(\n,\n)-2w^2(\I(\e_1,\e_1)^2+\I(\e_2,\e_2)^2+2\I(\e_1,\e_2)^2)\right)d\vg\nonumber\\
	&=\left((|dw|_g^2-\left(|\vec{\I}_g|^2+\mathrm{Ric}_{N^3}(\n,\n)\right)w^2\right)d\vg
\end{align}
If $A$ the area functional. Then by \eqref{d2vol}, if $\phi$ is a minimal surface,
\begin{align*}
	D^2A(\phi)[\w,\w&]=\int_{\Sigma}\left(|dw|_g^2-\left(|\vec{\I}_g|^2+\mathrm{Ric}_{N^3}(\n,\n)\right)w^2\right)d\vg\\
	&=-\int_{\Sigma}w\left(\Delta_gw+\left(|\vec{\I}_g|^2+\mathrm{Ric}_{N^3}(\n,\n)\right)w\right)d\vg=-\int_{\Sigma}w\,L_gw\,d\vg
\end{align*}
so we have for a minimal surface $\phi$, if $L_g$ is the Jacobi operator of $\phi$,
\begin{align*}
	D^2W(\phi)[\w,\w]=\frac{1}{2}\int_{\Sigma}^{}(L_g w)^2d\vg.
\end{align*}
An other interesting case is the second variation of the conformal Willmore $\mathscr{W}=\W_{S^3}$ for a minimal surface in $S^3$, already present in the paper of Joel Weiner (\cite{weiner}) presenting first the Euler-Lagrange equation of Willmore functional in arbitrary Riemannian manifolds. If $\phi:\Sigma\rightarrow S^3$ is a minimal surface, then
\begin{align}\label{indicewillmin}
	D^2\W(\phi)[\w,\w]&=D^2A(\phi)[\w,\w]+D^2W(\phi)[\w,\w]\nonumber\\
	&=\int_{\Sigma}|dw|_g^2-\left(|\vec{\I}_g|^2+\mathrm{Ric}_{S^3}(\n,\n)\right)w^2d\vg+\frac{1}{2}\left(\Delta_gw+\left(|\vec{\I}_g|^2+\mathrm{Ric}_{S^3}(\n,\n)\right)w\right)^2d\vg\nonumber\\
	&=-\int_{\Sigma}^{}w\left(\Delta_gw+\left(|\vec{\I}_g|^2+2\right)w\right)d\vg+\frac{1}{2}\left(\Delta_gw+\left(|\vec{\I}_g|^2+2\right)w\right)^2d\vg\nonumber\\
	&=\frac{1}{2}\int_{\Sigma}^{}\left(\Delta_gw+\left(|\vec{\I}_g|^2+2\right)w\right)\left(\Delta_gw+|\vec{\I}_g|^2w\right)d\vg\nonumber\\
	&=\frac{1}{2}\int_{\Sigma}^{}w\left(L_g\circ (L_g-2)w\right)d\vg
\end{align}
so the index of a minimal surface in $S^3$ is equal to the (finite) number of negative eigenvalues of the strongly elliptic operator $L_g\circ (L_g-2)$. Therefore if $\lambda$ is a positive eigenvalue of $L_g$, $E_\lambda$ the eigenspace associated to $\lambda$ we define by
\begin{align*}
	\dim E_\lambda
\end{align*}
the dimension of the eigenspace. Therefore, we deduce that
\begin{equation}\label{formuleindicemin}
	\mathrm{Ind}_{\W}(\phi)=\sum_{0<\lambda<2}\dim E_\lambda.
\end{equation}
which was already contained in the paper of Joel Weiner $\cite{weiner}$ (note the different sign convention which we use here).

\section{First and second variation of Gauss curvature}\label{sectionK}

In this section, we compute the first and second variation of the Gauss curvature. This may seem at first useless according to the Gauss-Bonnet theorem, as for every closed surface $\Sigma$, for all smooth metric $g$ on $\Sigma$ we have
\begin{align*}
	\int_{\Sigma}K_gd\vg=2\pi\, \chi(\Sigma).
\end{align*}
However, this formula needs corrections for a non-closed surface. And when we perform variations, the total curvature will not be constant in general. The need to consider non-closed surfaces will be clarified in the next section, and the reader may first skip this technical part to get first some motivation for performing these computations.
We fix an arbitrary Riemann surface $\Sigma$, which is not supposed to be closed.

\subsection{First variation of K}

	\begin{lemme}
		For all smooth immersion $\phi:\Sigma\rightarrow\R^n$, for any admissible variation $\{\phi_t\}_{t\in I}$ of $\phi$, we have for all $t\in I$,
		\begin{align*}
		\frac{d}{dt}\left(K_{g_t}d\mathrm{vol}_{g_t}\right)=d\left(\vec{\I}\res_g \star d\w\right).
		\end{align*}
		where in local coordinates $(x_1,x_2)$, we have
		\begin{align*}
			\vec{\I}\res_g\star d\w=e^{-2\lambda}\sum_{i,j=1}^{2}(-1)^{i+j-1}\s{\vec{\I}_{i,j}}{\D_{\e_{j+1}}\w}\star dx_i
		\end{align*}
	\end{lemme}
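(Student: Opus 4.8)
The plan is to work in a conformal local chart $\s{\e_i}{\e_j}=e^{2\lambda}\delta_{i,j}$ (so at $t=0$ the metric is conformal, though $\phi_t$ need not be), and to express the Gauss curvature via the conformal factor. For a conformal metric one has the classical formula $K_g\,d\mathrm{vol}_g=-\Delta_0\lambda\,dx_1\wedge dx_2$ where $\Delta_0=\p{1}^2+\p{2}^2$ is the flat Laplacian, i.e. $K_g\,d\mathrm{vol}_g=-d\left(\star_0 d\lambda\right)$ with $\star_0$ the flat Hodge star. However, since $\phi_t$ is not conformal for $t\neq 0$, I would instead start from the Gauss equation (item (ii) of the recalled theorem, in the form $K_g=\frac{\det \vec{\I}}{\det g}$ for a surface in $\R^n$, the ambient curvature $R$ vanishing) and differentiate that expression directly. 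Concretely, $K_{g_t}\,d\mathrm{vol}_{g_t}=\frac{\det \vec{\I}_t}{\det g_t}\sqrt{\det g_t}\,dx_1\wedge dx_2=\frac{\det \vec{\I}_t}{\sqrt{\det g_t}}\,dx_1\wedge dx_2$, so the quantity to differentiate has a clean pointwise meaning.

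First I would compute $\frac{d}{dt}\vec{\I}_{i,j}$ at general $t$: from Lemma \ref{lemme2} (and \eqref{normalderivative}) we have $\D_{\frac{d}{dt}}^\perp\vec{\I}_{i,j}=\left((\D_{\e_i}\D_{\e_j}-\D_{(\D_{\e_i}\e_j)^\top})\w+R(\w,\e_i)\e_j\right)^\perp$, and in $\R^n$ the curvature term drops. Pairing with $\n$ (in the trivial normal bundle case, or component-wise against a normal frame in general) gives $\frac{d}{dt}\I_{i,j}=\s{(\D_{\e_i}\D_{\e_j}-\D_{(\D_{\e_i}\e_j)^\top})\w}{\n}$, plus lower order terms from differentiating $\n$ which pair to zero against $\n$. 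Combining with $\frac{d}{dt}\det\vec{\I}_t$ (a $2\times 2$ cofactor expansion, exactly as in Lemma \ref{lemme1} for $\det g$) and with $\frac{d}{dt}\sqrt{\det g_t}=e^{2\lambda}\s{d\phi}{d\w}_g$ from Lemma \ref{lemme1}, I obtain an explicit local expression for $\frac{d}{dt}\left(K_{g_t}d\mathrm{vol}_{g_t}\right)$ as a function times $dx_1\wedge dx_2$. The task is then to recognize this function as $\p{1}$ of the $\star dx_2$-component minus $\p{2}$ of the $\star dx_1$-component of the proposed $1$-form $\vec{\I}\res_g\star d\w = e^{-2\lambda}\sum_{i,j}(-1)^{i+j-1}\s{\vec{\I}_{i,j}}{\D_{\e_{j+1}}\w}\star dx_i$ (indices mod $2$), i.e. to verify $d\left(\vec{\I}\res_g\star d\w\right)$ equals what was computed. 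Here one uses $\star dx_1 = e^{2\lambda}\cdot$(appropriate combination) — more precisely in the conformal chart $\star dx_1 = dx_2$, $\star dx_2 = -dx_1$ at $t=0$, so $d\left(\vec{\I}\res_g\star d\w\right)$ is a sum of first derivatives of the coefficients; matching requires the Codazzi-Mainardi identity (item (iii)), which in $\R^n$ reads $\D^\perp_{\e_i}\vec{\I}_{j,k}$ symmetric in the first two slots, to convert the "divergence of $\vec{\I}$" terms into the mean-curvature/trace terms that appear upon differentiating $\det\vec{\I}$.

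The main obstacle I expect is the bookkeeping in matching the exact-form structure: one must carefully track the non-conformality of $\phi_t$ (all the $e^{2\lambda_i}$, $\mu$ corrections from the Définition before Lemma \ref{lemmeortho}), distribute the $\star_0$ correctly, and invoke Codazzi at precisely the right place so that the second-derivative-of-$\w$ terms assemble into the claimed divergence form with no leftover curvature-of-the-surface terms. A cleaner route, which I would try first to avoid the heaviest computation, is to prove the identity at $t=0$ for all conformal $\phi$ by the following trick: the $2$-form $\frac{d}{dt}(K_{g_t}\,d\mathrm{vol}_{g_t})$ is, by naturality, $\mathcal{L}_{\w}(K\,d\mathrm{vol})$ pulled back appropriately, and since $\int_\Sigma K_g\,d\mathrm{vol}_g$ is a topological invariant on any closed piece, the variation must be exact; identifying the primitive then only requires computing the $1$-form up to a closed form, and a single explicit test (e.g. normal variations $\w=w\n$, using $\s{\D_{\e_i}\w}{\n}$ and $\s{\D_{\e_i}\w}{\e_j}=-w\I_{i,j}$) pins down the coefficients. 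Either way, the final check is a direct local computation that I would carry out in the conformal chart, reducing everything to flat-space calculus plus the two structural identities (Gauss and Codazzi).
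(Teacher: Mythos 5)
Your primary route --- writing $K_{g_t}\,d\mathrm{vol}_{g_t}=(\det g_t)^{-1/2}\left(\s{\vec{\I}_{1,1}}{\vec{\I}_{2,2}}-|\vec{\I}_{1,2}|^2\right)dx_1\wedge dx_2$ via the Gauss equation, differentiating with the variation formula for $\vec{\I}_{i,j}$ (curvature term dropped in $\R^n$) and for $\det g_t$ from Lemma \ref{lemme1}, then invoking Codazzi--Mainardi and tracking the non-conformal corrections $e^{2\lambda_i},\mu,\alpha_{i,j}^k$ so that everything cancels except the exact form --- is exactly the paper's proof. One caution on your proposed shortcut: Gauss--Bonnet only shows the variation $2$-form is exact, and a primitive is determined only up to a closed $1$-form, so testing against normal variations $\w=w\n$ cannot by itself pin down the specific primitive $\vec{\I}\res_g\star d\w$ claimed in the statement; the direct local computation (which you say you would carry out in any case) is unavoidable.
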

	
	\begin{proof}
		We have in conformal coordinates
		\begin{align*}
		K_{g_t}d\mathrm{vol}_{g_t}=e^{-4\lambda(t)}\left(\s{\vec{\I}_{1,1}}{\vec{\I}_{2,2}}-|\vec{\I}_{1,2}|^2\right)e^{2\lambda(t)}dx_1\wedge dx_2,
		\end{align*}
		where we recall the notation $e^{4\lambda(t)}=\det g_t=|\d{1}\phi_t\wedge \d{2}\phi_t|^2$.
		Therefore we need to compute
		\begin{align}\label{derivative}
		\frac{d}{dt}\left((\det g_t)^{-\frac{1}{2}}\left(\s{\vec{\I}_{1,1}}{\vec{\I}_{2,2}}-|\vec{\I}_{1,2}|^2\right)\right)&=\left(\frac{d}{dt}(\det g_t)^{-\frac{1}{2}}\right)\left(\s{\vec{\I}_{1,1}}{\vec{\I}_{2,2}}-|\vec{\I}_{1,2}|^2\right)\nonumber\\
		&+\left(\det g_t\right)^{-\frac{1}{2}}\frac{d}{dt}\left(\s{\vec{\I}_{1,1}}{\vec{\I}_{2,2}}-|\vec{\I}_{1,2}|^2\right)\\
		&=\mathrm{(I)}+\mathrm{(II)}\nonumber
		\end{align}
		We have by lemma \ref{lemme1},
		\begin{align*}
		\frac{d}{dt}\left(\det g_t\right)^{-\frac{1}{2}}&=-\frac{1}{2}e^{-6\lambda}\left(2e^{2\lambda_2}\s{\D_{\e_1}\w}{\e_1}+2e^{2\lambda_1}\s{\D_{\e_2}\w}{\e_2}-2\mu\left(\s{\D_{\e_1}\w}{\e_2}+\s{\D_{\e_2}\w}{\e_1}\right)\right)\\
		&=-e^{-6\lambda}\left(e^{2\lambda_2}\s{\D_{\e_1}\w}{\e_1}+e^{2\lambda_1}\s{\D_{\e_2}\w}{\e_2}-\mu\left(\s{\D_{\e_1}\w}{\e_2}+\s{\D_{\e_2}\w}{\e_1}\right)\right)
		\end{align*}
		and
		\begin{equation}\label{detgprime}
		\mathrm{(II)}=\left(\frac{d}{dt}(\det g_t)^{-\frac{1}{2}}\right)K_{g_t}=-e^{-2\lambda}\left(e^{2\lambda_2}\s{\D_{\e_1}\w}{\e_1}+e^{2\lambda_1}\s{\D_{\e_2}\w}{\e_2}-\mu\left(\s{\D_{\e_1}\w}{\e_2}+\s{\D_{\e_2}\w}{\e_1}\right)\right)K_{g_t}
		\end{equation}
		Therefore
		\begin{align*}
		\D_{\frac{d}{dt}}^\perp \vec{\I}_{i,j}=\left(\left(\D_{\e_i}\D_{\e_j}-\D_{(\D_{\e_i}\e_j)^\top}\right)\w\right)^\perp
		\end{align*}
		Now, recall that $e^{2\lambda_i(t)}=|\d{i}\phi_t|^2$, and we introduce
		\begin{align*}
		\alpha_{i,j}^k(t)=\s{\D_{\e_i(t)}\e_j(t)}{\e_k(t)}=\s{\partial_{x_i,x_j}^2\phi_t}{\d{k}\phi_t}
		\end{align*}
		and to simplify the writing, we drop the $t$ index in the following. We recall that
		\begin{align}\label{projtang}
		(\D_{\e_i}\e_j)^\top&=e^{-4\lambda}\left(e^{2\lambda_2}\s{\D_{\e_i}\e_j}{\e_1}-\mu\s{\D_{\e_i}\e_j}{\e_2}\right)\e_1\nonumber\\
		&+e^{-4\lambda}\left(e^{2\lambda_1}\s{\D_{\e_i}\e_j}{\e_2}-\mu\s{\D_{\e_i}\e_j}{\e_1}\right)\e_2\nonumber\\
		&=e^{-4\lambda}\left(e^{2\lambda_2}\alpha_{i,j}^1-\mu\,\alpha_{i,j}^2\right)\e_1+e^{-4\lambda}\left(e^{2\lambda_1}\alpha_{i,j}^2-\mu\,\alpha_{i,j}^1\right)\e_2\nonumber\\
		&=e^{-4\lambda}\sum_{l=1}^2\left(e^{2\lambda_{l+1}}\alpha_{i,j}^{l}-\mu\,\alpha_{i,j}^{l+1}\right)\e_l
		\end{align} 
		\begin{align*}
		\frac{d}{dt}\s{\vec{\I}_{1,1}}{\vec{\I}_{2,2}}&=\s{\D_{\e_1}^2\w-e^{-4\lambda}\sum_{l=1}^2\left(e^{2\lambda_{l+1}}\alpha_{1,1}^{l}-\mu\,\alpha_{1,1}^{l+1}\right)\D_{\e_l}\w}{\vec{\I}_{2,2}}\\
		&+\s{\D_{\e_2}^2\w-e^{-4\lambda}\sum_{l=1}^2\left(e^{2\lambda_{l+1}}\alpha_{2,2}^{l}-\mu\,\alpha_{2,2}^{l+1}\right)\D_{\e_l}\w}{\vec{\I}_{1,1}}
		\end{align*}
		and
		\begin{align*}
		\frac{d}{dt}|\vec{\I}_{1,2}|^2=2\s{\D_{\e_1}\D_{\e_2}\w-e^{-4\lambda}\sum_{l=1}^2\left(e^{2\lambda_{l+1}}\alpha_{1,2}^{l}-\mu\,\alpha_{1,2}^{l+1}\right)\D_{\e_l}\w}{\vec{\I}_{1,2}}.
		\end{align*}
		so
		\begin{align}
		\mathrm{(II)}&=e^{-2\lambda}\left(\s{\D_{\e_1}^2\w}{\vec{\I}_{2,2}}+\s{\D_{\e_2}\w}{\vec{\I}_{1,1}}-2\s{\D_{\e_1}\D_{\e_2}\w}{\vec{\I}_{1,2}}\right)\label{II1}\\
		&=-e^{-6\lambda}\sum_{l=1}^{2}\left(\left(e^{2\lambda_{l+1}}\alpha_{1,1}^l-\mu\alpha_{1,1}^{l+1}\right)\s{\D_{\e_l}\w}{\vec{\I}_{2,2}}
		+\left(e^{2\lambda_{l+1}}\alpha_{2,2}^l-\mu\alpha_{2,2}^{l+1}\right)\s{\D_{\e_l}\w}{\vec{\I}_{1,1}}\right)\nonumber\\
		&+2e^{-6\lambda}\sum_{l=1}^{2}\left(e^{2\lambda_{l+1}}\alpha_{1,2}^l-\mu\alpha_{1,2}^{l+1}\right)\s{\D_{\e_l}\w}{\vec{\I}_{1,2}}\label{II2}
		\end{align}
		We first compute \eqref{II1}, and we make some remarks first on covariant derivative.	We recall that the covariant derivative $\bar{\D}$ is just the orthogonal projection on $T\R^3\rightarrow \phi_{t\ast}(T\Sigma)$.
		By the definition of the covariant derivative $\D$, if $X,Y,Z$ are tangent vectors, then
		\begin{align*}
		\D^\perp_X \vec{\I}(Y,Z)=\D_X^\perp(\vec{\I}(Y,Z))-\vec{\I}(\bar{\D}_XY,Z)-\vec{\I}(Y,\bar{\D}_XZ)
		\end{align*}
		while Codazzi-Mainardi identity reads as $R=0$
		\begin{align*}
		\D_{X}^\perp \vec{\I}(Y,Z)=\D_{Y}^\perp \vec{\I}(X,Z).
		\end{align*}
		Therefore, by compatibility with the metric, we get for $i=1,2$
		\begin{align*}
		e^{-\lambda}\s{\D_{\e_k}^2\w}{\vec{\I}_{i,j}}&=\D_{\e_k}\left(\s{\D_{\e_k}\w}{e^{-2\lambda}\vec{\I}_{i,j}}\right)-e^{-2\lambda}\s{\D_{\e_k}\w}{\D_{\e_i}\left(\vec{\I}_{i,j}\right)}-\d{k}e^{-2\lambda}\s{\D_{\e_k}\w}{\vec{\I}_{i,j}}
		\end{align*}
		and invoking the two preceding identities, we get using by orthogonality of $\vec{\I}_{i,j}$ for the second line,
		\begin{align*}
		\D_{\e_k}^\top\left(\vec{\I}_{i,j}\right)&=e^{-4\lambda}\left(e^{2\lambda_2}\s{\D_{\e_k}\left(\vec{\I}_{i,j}\right)}{\e_1}-\mu\s{\D_{\e_k}\left(\vec{\I}_{i,j}\right)}{\e_2}\right)\e_1\\
		&+e^{-4\lambda}\left(e^{2\lambda_1}\s{\D_{\e_k}\left(\vec{\I}_{i,j}\right)}{\e_2}-\mu\s{\D_{\e_k}\left(\vec{\I}_{i,j}\right)}{\e_1}\right)\e_2\\
		&=-e^{-4\lambda}\sum_{l=1}^2\left(e^{2\lambda_{l+1}}\s{\vec{\I}_{i,j}}{\vec{\I}_{k,l}}-\mu\s{\vec{\I}_{i,j}}{\vec{\I}_{k,l+1}}\right)\e_l
		\end{align*}
		The, by definition of $\D$, and Codazzi-Mainardi identity, we get
		\begin{align*}
		\D_{\e_k}\left(\vec{\I}_{i,j}\right)&=\D_{\e_k}^\perp\left(\vec{\I}_{i,j}\right)+\D_{\e_k}^\top\left(\vec{\I}_{i,j}\right)\\
		&=\D_{\e_k}^\perp\vec{\I}(\e_i,\e_j)+\vec{\I}(\bar{\D}_{\e_k}\e_i,\e_j)+\vec{\I}(\e_i,\bar{\D}_{\e_k}\e_j)-e^{-4\lambda}\sum_{l=1}^2\left(e^{2\lambda_{l+1}}\s{\vec{\I}_{i,j}}{\vec{\I}_{k,l}}-\mu\s{\vec{\I}_{i,j}}{\vec{\I}_{k,l+1}}\right)\e_l\\
		&=\D_{\e_i}^\perp\vec{\I}(\e_k,\e_j)+e^{-4\lambda}\sum_{l=1}^2\left(e^{2\lambda_{l+1}}\alpha_{i,k}^l-\mu\alpha_{i,k}^{l+1}\right)\vec{\I}_{j,l}
		+e^{-4\lambda}\sum_{l=1}^{2}\left(e^{2\lambda_{l+1}}\alpha_{j,k}^{l}-\mu\alpha_{j,k}^{l+1}\right)\vec{\I}_{i,l}\\
		&-e^{-4\lambda}\sum_{l=1}^2\left(e^{2\lambda_{l+1}}\s{\vec{\I}_{i,j}}{\vec{\I}_{k,l}}-\mu\s{\vec{\I}_{i,j}}{\vec{\I}_{k,l+1}}\right)\e_l
		\end{align*}
		and now
		\begin{align*}
		\D_{\e_i}^\perp\vec{\I}(\e_j,\e_k)&=\D_{\e_i}\left(\vec{\I}_{j,k}\right)-\D_{\e_i}^\top\left(\vec{\I}_{j,k}\right)-\vec{\I}(\bar{\D}_{\e_i}\e_j,\e_k)-\vec{\I}(\e_j,\bar{\D}_{\e_i}\e_k)\\
		&=\D_{\e_i}^\perp\left(\vec{\I}_{j,k}\right)-e^{-4\lambda}\sum_{l=1}^{2}\left(e^{2\lambda_{l+1}}\alpha_{i,j}^l-\mu\alpha_{i,j}^{l+1}\right)\vec{\I}_{k,l}-e^{-4\lambda}\sum_{l=1}^{2}\left(e^{2\lambda_{l+1}}\alpha_{i,k}^l-\mu\alpha_{i,k}^{l+1}\right)\vec{\I}_{j,l}\\
		&+e^{-4\lambda}\sum_{l=1}^{2}\left(e^{2\lambda_{l+1}}\s{\vec{\I}_{j,k}}{\vec{\I}_{i,l}}-\mu\s{\vec{\I}_{j,k}}{\vec{\I}_{i,l+1}}\right)\e_l
		\end{align*}
		so summing both expression, two sums cancel, and we get
		\begin{align}\label{codazzi}
		\D_{\e_k}\left(\vec{\I}_{i,j}\right)&=\D_{\e_i}\left(\vec{\I}_{i,j}\right)+e^{-4\lambda}\sum_{l=1}^2\left(e^{2\lambda_{l+1}}\alpha_{j,k}^l-\mu\alpha_{j,k}^{l+1}\right)\vec{\I}_{i,l}-e^{-4\lambda}\sum_{l=1}^{2}\left(e^{2\lambda_{l+1}}\alpha_{i,j}^l-\mu\alpha_{i,j}^{l+1}\right)\vec{\I}_{k,l}\nonumber\\
		&+e^{-4\lambda}\sum_{l=1}^2\left(e^{2\lambda_{l+1}}\left(\s{\vec{\I}_{j,k}}{\vec{\I}_{i,l}}-\s{\vec{\I}_{i,j}}{\vec{\I}_{k,l}}\right)-\mu\left(\s{\vec{\I}_{j,k}}{\vec{\I}_{i,l+1}}-\s{\vec{\I}_{i,j}}{\vec{\I}_{k,l+1}}\right)\right)\e_l
		\end{align}
		We deduce that
		\begin{align*}
		&e^{-2\lambda}\s{\D_{\e_1}^2\w}{\vec{\I}_{2,2}}+e^{-2\lambda}\s{\D_{\e_2}^2\w}{\vec{\I}_{1,1}}-2\s{\D_{\e_1}\D_{\e_2}\w}{\vec{\I}_{1,2}}\\
		&=\D_{\e_1}\left(\s{\D_{\e_1}\w}{e^{-2\lambda}\vec{\I}_{2,2}}\right)
		+\D_{\e_2}\left(\s{\D_{\e_2}\w}{e^{-2\lambda}\vec{\I}_{1,1}}\right)
		-\D_{\e_1}\left(\s{\D_{\e_2}\w}{e^{-2\lambda}\vec{\I}_{1,2}}\right)
		-\D_{\e_2}\left(\s{\D_{\e_1}\w}{e^{-2\lambda}\vec{\I}_{1,2}}\right)\\
		&-e^{-2\lambda}\s{\D_{\e_1}\w}{\D_{\e_1}\left(\vec{\I}_{2,2}\right)}
		-e^{-2\lambda}\s{\D_{\e_2}\w}{\D_{\e_2}\left(\vec{\I}_{1,1}\right)}
		+e^{-2\lambda}\s{\D_{\e_1}\w}{\D_{\e_2}\left(\vec{\I}_{1,2}\right)}
		+e^{-2\lambda}\s{\D_{\e_2}\w}{\D_{\e_1}\left(\vec{\I}_{1,2}\right)}\\
		&-\d{1}e^{-2\lambda}\s{\D_{\e_1}\w}{\vec{\I}_{2,2}}
		-\d{2}e^{-2\lambda}\s{\D_{\e_1}\w}{\vec{\I}_{1,1}}
		+\d{1}e^{-2\lambda}\s{\D_{\e_2}\w}{\vec{\I}_{1,2}}
		+\d{2}e^{-2\lambda}\s{\D_{\e_1}\w}{\vec{\I}_{1,2}}\\
		&=\mathrm{(i)}+\mathrm{(ii)}+\mathrm{(iii)}
		\end{align*}
		where $\mathrm{(i)}$, $\mathrm{(ii)}$ and $\mathrm{(iii)}$ correspond to the first, second and third line respectively on the preceding equality.
		Now, 
		\begin{align*}
		\mathrm{(i)}dx_1\wedge dx_2=d\left(\vec{\I}\res_g \star d\w\right).
		\end{align*}
		Indeed, we have
		\begin{align*}
			&\D_{\e_1}\left(\s{\D_{\e_1}\w}{e^{-2\lambda}\vec{\I}_{2,2}}\right)
			+\D_{\e_2}\left(\s{\D_{\e_2}\w}{e^{-2\lambda}\vec{\I}_{1,1}}\right)
			-\D_{\e_1}\left(\s{\D_{\e_2}\w}{e^{-2\lambda}\vec{\I}_{1,2}}\right)
			-\D_{\e_2}\left(\s{\D_{\e_1}\w}{e^{-2\lambda}\vec{\I}_{1,2}}\right)\\
			&=d\left(e^{-2\lambda}\left(-\s{\vec{\I}_{1,1}}{\D_{\e_2}\w}+\s{\vec{\I}_{1,2}}{\D_{\e_1}\w}\right)dx_2+e^{-2\lambda}\left(\s{\vec{\I}_{2,2}
				}{\D_{\e_1}\w}-\s{\vec{\I}_{2,1}}{\D_{\e_2}\w}\right)dx_1\right)\\
			&=d\left(\vec{\I}\res_g\star d\w=e^{-2\lambda}\sum_{i,j=1}^{2}(-1)^{i+j-1}\s{\vec{\I}_{i,j}}{\D_{\e_{j+1}}\w}\star dx_i\right)
		\end{align*}
		Therefore, we simply need to verify that all remaining terms cancel.
		Using the Codazzi-Mainardi identity \eqref{codazzi} for the first two terms of $\mathrm{(ii)}$ cancels the last two terms and we get
		\begin{align*}
		\mathrm{(ii)}&=-e^{-2\lambda}\s{\D_{\e_1}\w}{e^{-4\lambda}\sum_{l=1}^2\left(e^{2\lambda_{l+1}}\alpha_{12}^l-\mu\alpha_{12}^{l+1}\right)\vec{\I}_{2,l}-e^{-4\lambda}\sum_{l=1}^{2}\left(e^{2\lambda_{l+1}}\alpha_{2,2}^l-\mu\alpha_{2,2}^{l+1}\right)\vec{\I}_{1,l}}\\
		&-e^{-2\lambda}\s{\D_{\e_1}\w}{e^{-4\lambda}\sum_{l=1}^2\left(e^{2\lambda_{l+1}}\left(\s{\vec{\I}_{1,2}}{\vec{\I}_{2,l}}-\s{\vec{\I}_{2,2}}{\vec{\I}_{1,l}}\right)-\mu\left(\s{\vec{\I}_{1,2}}{\vec{\I}_{2,l+1}}-\s{\vec{\I}_{2,2}}{\vec{\I}_{1,l+1}}\right)\right)\e_l}\\
		&-e^{-2\lambda}\s{\D_{\e_2}\w}{e^{-4\lambda}\sum_{l=1}^2\left(e^{2\lambda_{l+1}}\alpha_{1,2}^l-\mu\alpha_{1,2}^{l+1}\right)\vec{\I}_{1,l}-e^{-4\lambda}\sum_{l=1}^{2}\left(e^{2\lambda_{l+1}}\alpha_{1,1}^l-\mu\alpha_{1,1}^{l+1}\right)\vec{\I}_{2,l}}\\
		&-e^{-2\lambda}\s{\D_{\e_2}\w}{e^{-4\lambda}\sum_{l=1}^2\left(e^{2\lambda_{l+1}}\left(\s{\vec{\I}_{1,2}}{\vec{\I}_{1,l}}-\s{\vec{\I}_{1,1}}{\vec{\I}_{2,l}}\right)-\mu\left(\s{\vec{\I}_{1,2}}{\vec{\I}_{1,l+1}}-\s{\vec{\I}_{1,1}}{\vec{\I}_{2,l+1}}\right)\right)\e_l}
		\end{align*}
		The sum of the odd lines of this expression gives
		\begin{align*}
		&-e^{-2\lambda}\s{\D_{\e_1}\w}{e^{-4\lambda}\left(e^{2\lambda_2}(|\vec{\I}_{1,2}|^2-\s{\vec{\I}_{2,2}}{\vec{\I}_{1,1}})\e_1-\mu((|\vec{\I}_{1,2}|^2-\s{\vec{\I}_{2,2}}{\vec{\I}_{1,1}}))\e_2\right)}\\
		&-e^{-2\lambda}\s{\D_{\e_2}\w}{e^{-4\lambda}\left(e^{2\lambda_1}(|\vec{\I}_{1,2}|^2-\s{\vec{\I}_{1,1}}{\vec{\I}_{2,2}})\e_2-\mu((|\vec{\I}_{1,2}|^2-\s{\vec{\I}_{1,1}}{\vec{\I}_{2,2}}))\e_1\right)}\\
		&=e^{-2\lambda}\left(e^{2\lambda_2}\s{\D_{\e_1}\w}{\e_1}+e^{2\lambda_1}\s{\D_{\e_2}\w}{\e_2}-\mu\left(\s{\D_{\e_1}\w}{\e_2}+\s{\D_{\e_2}\w}{\e_1}\right)\right)K_{g_t}\\
		&=-\left(\frac{d}{dt}(\det g_t)^{-\frac{1}{2}}\right)K_{g_t}
		\end{align*}
		which cancels with \eqref{detgprime}.
		Now, we have
		\begin{align*}
		\d{i}e^{4\lambda}=2\left(\alpha_{i,1}^1e^{2\lambda_2}+\alpha_{i,2}^2e^{2\lambda_1}-\mu\left(\alpha_{i,1}^2+\alpha_{i,2}^1\right)\right)
		\end{align*}
		so
		\begin{align*}
		\d{i}e^{-2\lambda}=-e^{-6\lambda}\left(\alpha_{i,1}^1e^{2\lambda_2}+\alpha_{i,2}^2e^{2\lambda_1}-\mu\left(\alpha_{i,1}^2+\alpha_{i,2}^1\right)\right).
		\end{align*}
		The remaining terms is the sum of the even lines of the last expression of (ii), (iii) and \eqref{II2}:
		\begin{align*}
		&-e^{-6\lambda}\sum_{l=1}^{2}\left(e^{2\lambda_{l+1}}\alpha_{1,2}^l-\mu\alpha_{1,2}^{l+1}\right)\s{\D_{\e_1}\w}{\vec{\I}_{2,l}}+e^{-6\lambda}\sum_{l=1}^{2}\left(e^{2\lambda_{l+1}}\alpha_{2,2}^l-\mu\alpha_{2,2}^{l+1}\right)\s{\D_{\e_1}\w}{\vec{\I}_{1,l}}\\
		&-e^{-6\lambda}\sum_{l=1}^{2}\left(e^{2\lambda_{l+1}}\alpha_{1,2}^l-\mu\alpha_{1,2}^{l+1}\right)\s{\D_{\e_2}\w}{\vec{\I}_{1,l}}+e^{-6\lambda}\sum_{l=1}^{2}\left(e^{2\lambda_{l+1}}\alpha_{1,1}^l-\mu\alpha_{1,1}^{l+1}\right)\s{\D_{\e_2}\w}{\vec{\I}_{2,l}}\\
		&+e^{-6\lambda}\left(\alpha_{1,1}^1e^{2\lambda_2}+\alpha_{1,2}^2e^{2\lambda_1}-\mu\left(\alpha_{1,1}^2+\alpha_{1,2}^1\right)\right)\s{\D_{\e_1}\w}{\vec{\I}_{2,2}}\\
		&+e^{-6\lambda}\left(\alpha_{1,2}^1e^{2\lambda_2}+\alpha_{2,2}^2e^{2\lambda_1}-\mu\left(\alpha_{2,1}^2+\alpha_{2,2}^1\right)\right)\s{\D_{\e_2}\w}{\vec{\I}_{1,1}}\\
		&-e^{-6\lambda}\left(\alpha_{1,1}^1e^{2\lambda_2}+\alpha_{1,2}^2e^{2\lambda_1}-\mu\left(\alpha_{1,1}^2+\alpha_{1,2}^1\right)\right)\s{\D_{\e_2}\w}{\vec{\I}_{1,2}}\\
		&-e^{-6\lambda}\left(\alpha_{1,2}^1e^{2\lambda_2}+\alpha_{2,2}^2e^{2\lambda_1}-\mu\left(\alpha_{2,1}^2+\alpha_{2,2}^1\right)\right)\s{\D_{\e_1}\w}{\vec{\I}_{1,2}}\\
		&-e^{-6\lambda}\sum_{l=1}^{2}\left(\left(e^{2\lambda_{l+1}}\alpha_{1,1}^l-\mu\alpha_{1,1}^{l+1}\right)\s{\D_{\e_l}\w}{\vec{\I}_{2,2}}
		+\left(e^{2\lambda_{l+1}}\alpha_{2,2}^l-\mu\alpha_{2,2}^{l+1}\right)\s{\D_{\e_l}\w}{\vec{\I}_{1,1}}\right)\nonumber\\
		&+2e^{-6\lambda}\sum_{l=1}^{2}\left(e^{2\lambda_{l+1}}\alpha_{1,2}^l-\mu\alpha_{1,2}^{l+1}\right)\s{\D_{\e_l}\w}{\vec{\I}_{1,2}}\\
		&=-\ala{2}{1,2}{1}{1}{1,2}-\ala{1}{1,2}{2}{1}{2,2}\\
		&+\ala{2}{2,2}{1}{1}{1,1}+\ala{1}{2,2}{2}{1}{1,2}\\
		&-\ala{2}{1,2}{1}{2}{1,1}-\ala{1}{1,2}{2}{2}{1,2}\\
		&+\ala{2}{1,1}{1}{2}{1,2}+\ala{1}{1,1}{2}{2}{2,2}\\
		&+\ala{2}{1,1}{1}{1}{2,2}+\ala{1}{1,2}{2}{1}{2,2}\\
		&+\ala{2}{1,2}{1}{2}{1,1}+\ala{1}{2,2}{2}{2}{1,1}\\
		&-\ala{2}{1,1}{1}{2}{1,2}-\ala{1}{1,2}{2}{2}{1,2}\\
		&-\ala{2}{1,2}{1}{1}{1,2}-\ala{1}{2,2}{2}{1}{1,2}\\
		&-\ala{2}{1,1}{1}{1}{2,2}-\ala{1}{1,1}{2}{2}{2,2}\\
		&-\ala{2}{2,2}{1}{1}{1,1}-\ala{1}{2,2}{2}{2}{1,1}\\
		&+2\ala{2}{1,2}{1}{1}{1,2}+2\ala{1}{1,2}{2}{2}{1,2}\\
		&=0
		\end{align*}
		which concludes the proof of the lemma.
	\end{proof}

\subsection{Second variation of K}

\begin{lemme}\label{d2k}
	For every minimal immersion $\phi:\Sigma\rightarrow\R^3$, and for all normal admissible variation $\{\phi_t\}_{t\in I}$ of $\phi$ of variation vector $\w=w\n$, we have
	\begin{equation}
		\frac{d^2}{dt^2}\left(K_{g_t}d\mathrm{vol}_{g_t}\right)_{|t=0}=d\left(\left(\Delta_g w+2Kw\right)\star dw-\frac{1}{2}\star d|dw|_g^2\right).
	\end{equation}
\end{lemme}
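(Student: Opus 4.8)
The plan is to differentiate the first variation identity just established. Since
\[
\frac{d}{dt}\left(K_{g_t}d\mathrm{vol}_{g_t}\right)=d\left(\vec{\I}\res_{g_t}\star_t d\w_t\right)
\]
holds for \emph{every} $t$ in the parameter interval and the exterior derivative on $\Sigma$ commutes with $\frac{d}{dt}$, we get at once
\[
\frac{d^2}{dt^2}\left(K_{g_t}d\mathrm{vol}_{g_t}\right)_{|t=0}=d\left(\left.\frac{d}{dt}\right|_{t=0}\left(\vec{\I}\res_{g_t}\star_t d\w_t\right)\right),
\]
with $\w_t=\D_{\frac{d}{dt}}\phi_t$. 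As in Section~\ref{sectionW} we take the variation with $\D_{\frac{d}{dt}}\frac{d}{dt}\phi_t=0$, i.e. $\D_\w\w=0$ (straight segments in $\R^3$): being a $2$-form, the left-hand side genuinely requires this normalisation, which moreover annihilates several terms below. It then suffices to compute the $1$-form $\zeta:=\left.\frac{d}{dt}\right|_{t=0}\big(\vec{\I}\res_{g_t}\star_t d\w_t\big)$ and show $d\zeta=d\big((\Delta_gw+2K_gw)\star dw-\frac{1}{2}\star d|dw|_g^2\big)$; in fact $\zeta$ will equal $(\Delta_gw+2K_gw)\star dw-\frac{1}{2}\star d|dw|_g^2$ up to a closed $1$-form (harmless, since only $d\zeta$ matters).

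I would expand $\zeta$ by the Leibniz rule on the local expression $e^{-2\lambda(t)}\sum_{i,j}(-1)^{i+j-1}\s{\vec{\I}_{i,j}(t)}{\D_{\e_{j+1}(t)}\w_t}\star_t dx_i$, exploiting the simplifications special to a minimal immersion into flat $\R^3$. First, $\frac{d}{dt}e^{-2\lambda(t)}|_{t=0}=0$: by Lemma~\ref{lemme1} and minimality $\frac{d}{dt}d\mathrm{vol}_{g_t}|_{t=0}=\s{d\phi}{d\w}_g\,d\vg=-2wH_g\,d\vg=0$, so the term differentiating $e^{-2\lambda}$ disappears. Second, $\D_{\frac{d}{dt}}\D_{\e_{j+1}(t)}\w_t=\D_{\e_{j+1}}\D_\w\w+R(\w,\e_{j+1})\w=0$ because $R\equiv 0$ and $\D_\w\w=0$, so $\D_{\e_{j+1}(t)}\w_t$ is covariantly constant along the path and only the factor $\vec{\I}_{i,j}(t)$ is differentiated. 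Third, by Lemma~\ref{lemme2} with $R\equiv 0$ and the identity $\big((\D_{\e_i}\D_{\e_j}-\D_{(\D_{\e_i}\e_j)^\top})\w\big)^\perp=\big(\bar{\D}^2_{i,j}w-w\,e^{-4\lambda}(\I_{i,1}\I_{j,1}+\I_{i,2}\I_{j,2})\big)\n$ used to derive \eqref{d2minfacile}, the normal part of $\D_{\frac{d}{dt}}\vec{\I}_{i,j}$ is explicit, while from $\s{\D_{\frac{d}{dt}}\vec{\I}_{i,j}}{\e_l}=-\s{\vec{\I}_{i,j}}{\D_{\e_l}\w}$ one gets $(\D_{\frac{d}{dt}}\vec{\I}_{i,j})^\top=-\I_{i,j}\,\nabla_g w$. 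Finally $\frac{d}{dt}(\star_t dx_i)|_{t=0}$ is read off from $\frac{d}{dt}g^{k,l}|_{t=0}$ together with $\frac{d}{dt}\sqrt{\det g_t}|_{t=0}=0$ (Lemma~\ref{lemme1}), and is linear in $w$ and in $\vec{\I}_g$.

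Collecting, $\zeta$ is the sum of a part quadratic in the first and second derivatives of $w$ — the pairing of the $\bar{\D}^2_{i,j}w\,\n$ term against the normal component of $\D_{\e_{j+1}}\w$ — and a part linear in $w$, quadratic in $\vec{\I}_g$, and linear in $dw$, arising from the $\vec{\I}_g$-terms in $\D^\perp_{\frac{d}{dt}}\vec{\I}_{i,j}$, from $(\D_{\frac{d}{dt}}\vec{\I}_{i,j})^\top$ paired with $w\,\D_{\e_{j+1}}\n$, and from the $\frac{d}{dt}\star_t$ contribution. Using the minimal-surface relations $\I_{1,1}=-\I_{2,2}$, $\I_{1,1}^2+\I_{1,2}^2=-e^{4\lambda}K_g$ and $|\vec{\I}_g|^2=-2K_g$, the second part collapses to $2K_gw\,\star dw$; and via $\Delta_gw=e^{-2\lambda}(\bar{\D}^2_{1,1}w+\bar{\D}^2_{2,2}w)$ the first part rewrites as $\Delta_gw\,\star dw$ plus the antisymmetric remainder $-\frac{1}{2}\star d|dw|_g^2$; applying $d$ finishes the proof. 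The main obstacle is exactly this bookkeeping: as in the first-variation-of-$K$ lemma, several a priori unrelated batches of Christoffel-weighted and $e^{-4\lambda}$-weighted terms must be checked to cancel or recombine, and recognising that the leftover $w$-quadratic terms assemble precisely into $-\frac{1}{2}\star d|dw|_g^2$ is the delicate point.
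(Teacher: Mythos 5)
Your proposal is correct and follows essentially the same route as the paper: differentiate the first-variation identity $\frac{d}{dt}(K_{g_t}d\mathrm{vol}_{g_t})=d(\vec{\I}\res_g\star d\w)$ once more in $t$, commute with $d$, use minimality to kill the derivative of the conformal factor and flatness of $\R^3$ together with $\D_{\w}\w=0$ to freeze $\D_{\e_k}\w$, and then recombine the surviving terms. Your bookkeeping via the covariant Hessian $\bar{\D}^2_{i,j}w$ and the tangential/normal split of $\D_{\frac{d}{dt}}\vec{\I}_{i,j}$ is a slightly cleaner packaging of the paper's raw coordinate computation (it makes the appearance of $-\frac{1}{2}\star d|dw|_g^2$ transparent), but it is the same argument.
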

\begin{proof}Thanks of lemma, we have
	\begin{align*}
	\frac{d}{dt}(K\,d\vg)=d(\vec{\I}\res_g \star d\w)
	\end{align*}
	which reads in local coordinates
	\begin{align*}
	\vec{\I}\res\star d\w=e^{-2\lambda}\left(-\s{\vec{\I}_{1,1}}{\partial_{x_2}\w}+\s{\vec{\I}_{1,2}}{\partial_{x_1}\w}\right)dx_1+e^{-2\lambda}\left(\s{\vec{\I}_{2,2}}{\partial_{x_1}\w}-\s{\vec{\I}_{1,2}}{\partial_{x_2}\w}\right)dx_2.
	\end{align*}
	where $e^{4\lambda}=e^{4\lambda(t)}=\det g_t=|\d{1}\phi_t\wedge\d{2}\phi_t|$.
	We have as $\w=w\n$,
	\begin{align*}
	e^{-2\lambda}\s{\vec{\I}_{i,j}}{\d{k}\w}=e^{-2\lambda}\I_{i,j}\s{\n}{\d{k}\w}=e^{-2\lambda}\s{\partial^2_{x_i,x_j}\phi}{\n}\d{k}w
	\end{align*}
	Now,
	\begin{align*}
	{\frac{d}{dt}e^{2\lambda}}_{|t=0}=\s{\d{1}\phi}{\d{1}\w}+\s{\d{2}\phi}{\d{2}\w}=\s{\d{1}\phi}{\d{1}\n}w+\s{\d{2}\phi}{\d{2}\n}w=-2e^{2\lambda}wH_g=0
	\end{align*}
	as $M=\phi(\Sigma)$ is minimal. So we have
	\begin{align*}
	{\frac{d}{dt}\left(e^{-2\lambda}\s{\vec{\I}_{i,j}}{\d{k}\w}\right)}_{|t=0}=e^{-2\lambda}{\frac{d}{dt}\left(\s{\partial^2_{x_i,x_j}\phi}{\n}\s{\d{k}\w}{\n}\right)}_{|t=0}.
	\end{align*}
	And choosing conformal coordinates, we compute
	\begin{align*}
	&{\frac{d}{dt}\s{\partial^2_{x_i,x_j}\phi}{\n}}_{|t=0}=\s{\partial^2_{x_i,x_j}\w}{\n}+\s{\partial^2_{x_i,x_j}\phi}{-\sum_{l=1}^2e^{-2\lambda}\s{\d{l}\w}{\n}\d{l}\phi}\\
	&=\s{(\partial^2_{x_i,x_j}w)\n+\d{i}w\d{j}\n+\d{j}w\d{i}\n+w\partial^2_{x_i,x_j}\n}{\n}
	-\sum_{l=1}^2e^{-2\lambda}\s{\partial^2_{x_i,x_j}\phi}{\d{l}\phi}\d{l}w\\
	&=\partial^2_{x_i,x_j}w-w\s{\d{i}\n}{\d{j}\n}-e^{-2\lambda}\sum_{l=1}^2 \s{\partial^2_{x_i,x_j}\phi}{\d{l}\phi}\d{l}w
	\end{align*}
	\begin{align*}
	{\frac{d}{dt}\s{\d{k}\w}{\n}}_{|t=0}&=\s{\d{k}\w}{\frac{d}{dt}\n_{|t=0}}
	=\sum_{l=1}^{2}\s{\d{k}\w}{-e^{-2\lambda}\s{\n}{\d{l}\w}\d{l}\phi}\\
	&=-e^{-2\lambda}\sum_{l=1}^2\s{\d{k}\n}{\d{l}\phi}w\,\d{l}w=\sum_{l=1}^2\I_{k,l}\;w\,\d{l}w
	\end{align*}
	So
	\begin{align*}
	{\frac{d}{dt}\left(\s{\partial^2_{x_i,x_j}\phi}{\n}\s{\d{k}\w}{\n}\right)}_{|t=0}&=\left(\partial^2_{x_i,x_j}w-w\s{\d{i}\n}{\d{j}\n}-e^{-2\lambda}\sum_{l=1}^2 \s{\partial^2_{x_i,x_j}\phi}{\d{l}\phi}\d{l}w\right)\d{k}w\\
	&+e^{2\lambda}\I_{i,j}\sum_{l=1}^2\I_{k,l}\;w\,\d{l}w
	\end{align*}
	and finally,
	\begin{align*}
	e^{-2\lambda}{\frac{d}{dt}\s{\vec{\I}_{i,j}}{\d{k}\w}}_{|t=0}&=e^{-2\lambda}(\partial^2_{x_i,x_j}w)(\d{k}w)-e^{-2\lambda}w\s{\d{i}\n}{\d{j}\n}\d{k}w\\
	&-e^{-4\lambda}\sum_{l=1}^2\s{\partial^2_{x_i,x_j}\phi}{\d{l}\phi}(\d{k}w)(\d{l}w)
	+w\,\sum_{l=1}^2\I_{i,j}\I_{k,l}\d{l}w.
	\end{align*}
	By minimality, we have
	\begin{align*}
	|\D_{e_1}\n|_g^2=|\D_{\e_2}\n|_g^2=\frac{|\vec{\I}_g|^2}{2}=-K_g, \quad \s{\D_{\e_1}\n}{\D_{\e_2}\n}=0
	\end{align*}
	
	As a consequence, we get
	\begin{align*}
	&e^{-2\lambda}\frac{d}{dt}\left(-\s{\vec{\I}_{1,1}}{\d{2}\w}+\s{\vec{\I}_{1,2}}{\d{1}\w}\right)=-\left(e^{-2\lambda}\partial^2_{x_1}w-we^{-2\lambda}|\d{1}\n|^2\right)\d{2}w\\
	&+e^{-4\lambda}\left(\s{\partial^2_{x_1}\phi}{\d{1}\phi}\d{1}w+\s{\partial^2_{x_1}\phi}{\d{2}\phi}\d{2}w\right)\d{2}w
	-w(\I_{1,1}\I_{1,2}\d{1}w+\I_{1,1}\I_{2,2}\d{2}w)\\
	&+\left(e^{-2\lambda}\partial^2_{x_1,x_2}w-we^{-2\lambda}\s{\d{1}\n}{\d{2}\n}\right)\d{1}w
	-e^{-4\lambda}\bigg(\s{\partial^2_{x_1,x_2}\phi}{\d{1}\phi}\d{1}w+\s{\partial^2_{x_1,x_2}\phi}{\d{2}\phi}\d{2}w\bigg)\d{1}w\\
	&+w(\I_{1,2}\I_{1,1}\d{1}w+\I_{1,2}\I_{1,2}\d{2}w)\\
	&=-e^{-2\lambda}\partial^2_{x_1}w\,\d{2}w+e^{-2\lambda}\partial^2_{x_1,x_2}w\,\d{1}w-w\d{2}w\left(\I_{1,1}\I_{2,2}-\I_{1,2}^2\right)+w\d{2}w|\D_{\e_1}\n|^2\\
	&+e^{-4\lambda}\bigg(\left(\s{\partial^2_{x_1}\phi}{\d{1}\phi}\d{1}w+\s{\partial^2_{x_1}\phi}{\d{2}\phi}\d{2}w\right)\d{2}w\\
	&-\left(\s{\partial^2_{x_1,x_2}\phi}{\d{1}\phi}\d{1}w+\s{\partial^2_{x_1,x_2}\phi}{\d{2}\phi}\d{2}w\right)\d{1}w\bigg)\\
	&=-e^{-2\lambda}\partial^2_{x_1}w\,\d{2}w+e^{-2\lambda}\partial^2_{x_1,x_2}w\,\d{1}w-2K\,w\d{2}w\\
	&+e^{-4\lambda}\bigg\{\left(\s{\partial^2_{x_1}\phi}{\d{1}\phi}\d{1}w+\s{\partial^2_{x_1}\phi}{\d{2}\phi}\d{2}w\right)\d{2}w\\
	&-\left(\s{\partial^2_{x_1,x_2}\phi}{\d{1}\phi}\d{1}w+\s{\partial^2_{x_1,x_2}\phi}{\d{2}\phi}\d{2}w\right)\d{1}w\bigg\}
	\end{align*}
	
	And
	\begin{align*}
	&e^{-2\lambda}\frac{d}{dt}\left(\s{\vec{\I}_{2,2}}{\d{1}\w}-\s{\vec{\I}_{1,2}}{\d{2}\w}\right)=e^{-2\lambda}\partial^2_{x_2}w\d{1}w-w\d{1}w|\D_{\e_2}\n|^2+w(\I_{2,2}\I_{1,1}\d{1}w+\I_{2,2}\I_{1,2}\d{2}w)\\
	&-e^{-4\lambda}\left(\s{\partial^2_{x_2}\phi}{\d{1}\phi}\d{1}w+\s{\partial^2_{x_2}\phi}{\d{2}\phi}\d{2}\w\right)\d{1}w
	-e^{-2\lambda}\partial^2_{x_1,x_2}w\\
	&-w\left(\I_{1,2}\I_{1,2}\d{1}w+\I_{1,2}\I_{2,2}\d{2}w\right)
	+e^{-4\lambda}\left(\s{\partial^2_{x_1,x_2}\phi}{\d{1}\phi}\d{1}w+\s{\partial^2_{x_1,x_2}\phi}{\d{2}\phi}\d{2}w\right)\d{2}w\\
	&=e^{-2\lambda}\left(\partial^2_{x_2}w\,\d{1}w-\partial^2_{x_1,x_2}w\,\d{2}w\right)+2K_g\,w\d{1}w\\
	&+e^{-4\lambda}\bigg\{-\left(\s{\partial^2_{x_2}\phi}{\d{1}\phi}\d{1}w+\s{\partial^2_{x_2}\phi}{\d{2}\phi}\d{2}\w\right)\d{1}w\\
	&+\left(\s{\partial^2_{x_1,x_2}\phi}{\d{1}\phi}\d{1}w+\s{\partial^2_{x_1,x_2}\phi}{\d{2}\phi}\d{2}w\right)\d{2}w\bigg\}
	\end{align*}
	
	So we get
	\begin{align*}
	&\frac{d}{dt}\left(\vec{\I}\res\star d\w\right)=\bigg\{-e^{-2\lambda}\partial^2_{x_1}w\,\d{2}w+e^{-2\lambda}\partial^2_{x_1,x_2}w\,\d{1}w-2K_g\,w\d{2}w\\
	&+e^{-4\lambda}\bigg\{\left(\s{\partial^2_{x_1}\phi}{\d{1}\phi}\d{1}w+\s{\partial^2_{x_1}\phi}{\d{2}\phi}\d{2}w\right)\d{2}w\\
	&-\left(\s{\partial^2_{x_1,x_2}\phi}{\d{1}\phi}\d{1}w+\s{\partial^2_{x_1,x_2}\phi}{\d{2}\phi}\d{2}w\right)\d{1}w\bigg\}\bigg\}dx_1\\
	&+\bigg\{e^{-2\lambda}\left(\partial^2_{x_2}w\,\d{1}w-\partial^2_{x_1,x_2}w\,\d{2}w\right)+2K_g\,w\d{1}w\\
	&e^{-4\lambda}\bigg\{-\left(\s{\partial^2_{x_2}\phi}{\d{1}\phi}\d{1}w+\s{\partial^2_{x_2}\phi}{\d{2}\phi}\d{2}\w\right)\d{1}w\\
	&+\left(\s{\partial^2_{x_1,x_2}\phi}{\d{1}\phi}\d{1}w+\s{\partial^2_{x_1,x_2}\phi}{\d{2}\phi}\d{2}w\right)\d{2}w\bigg\}\bigg\}dx_2	
	\end{align*}
	Now, as $M$ is minimal, $\phi:\Sigma\rightarrow\R^3$ is harmonic for the metric $g$, so in our conformal chart,
	\begin{align*}
	\Delta_g\phi=e^{-2\lambda}(\partial^2_{x_1}\phi+\partial^2_{x_2}\phi)=0
	\end{align*}
	thus $\partial^2_{x_2}\phi=-\partial^2_{x_1}\phi$. And by conformity, 
	\begin{align*}
	\s{\d{i}\phi}{\d{j}\phi}=e^{2\lambda}\delta_{i,j},\quad \s{\partial^2_{x_i,x_j}\phi}{\d{j}\phi}=\frac{1}{2}\d{i}e^{2\lambda},\quad 1\leq i,j\leq 2.
	\end{align*}
	We deduce that
	\begin{align*}
	&\left(\s{\partial^2_{x_1}\phi}{\d{1}\phi}\d{1}w+\s{\partial^2_{x_1}\phi}{\d{2}\phi}\d{2}w\right)\d{2}w
	-\left(\s{\partial^2_{x_1,x_2}\phi}{\d{1}\phi}\d{1}w+\s{\partial^2_{x_1,x_2}\phi}{\d{2}\phi}\d{2}w\right)\d{1}w\\
	&=\s{\d{1}^2\phi}{\d{1}\phi}(\d{1}w)(\d{2}w)-\s{\d{2}^2\phi}{\d{2}\phi}(\d{2}w)^2
	-\s{\partial^2_{x_1,x_2}\phi}{\d{1}\phi}(\d{1}w)^2\\
	&-\s{\partial^2_{x_1,x_2}\phi}{\d{2}\phi}(\d{1}w)(\d{2}w)\\
	&=\frac{1}{2}\bigg(\d{1}e^{2\lambda}(\d{1}w)(\d{2}w)-\d{2}e^{2\lambda}(\d{2}w)^2-\d{2}e^{2\lambda}(\d{1}w)^2-\d{1}e^{2\lambda}(\d{1}w)(\d{2}w)\bigg)\\
	&=-\frac{1}{2}\d{2}e^{2\lambda}\left(\d{1}w)^2+(\d{2}w)^2\right)
	\end{align*}
	And likewise,
	\begin{align*}
	&\left(\s{\partial^2_{x_1,x_2}\phi}{\d{1}\phi}\d{1}w+\s{\partial^2_{x_1,x_2}\phi}{\d{2}\phi}\d{2}w\right)\d{2}w+\left(\s{\partial^2_{x_1}\phi}{\d{1}\phi}\d{1}w-\s{\partial^2_{x_2}\phi}{\d{2}\phi}\d{2}\w\right)\d{1}w\\
	&=\frac{1}{2}\d{1}e^{2\lambda}\left((\d{1}w)^2+(\d{2}w)^2\right)
	\end{align*}
	so
	\begin{align*}
	&e^{-4\lambda}\bigg\{\left(\s{\partial^2_{x_1}\phi}{\d{1}\phi}\d{1}w+\s{\partial^2_{x_1}\phi}{\d{2}\phi}\d{2}w\right)\d{2}w
	-\Big(\s{\partial^2_{x_1,x_2}\phi}{\d{1}\phi}\d{1}w\\
	&+\s{\partial^2_{x_1,x_2}\phi}{\d{2}\phi}\d{2}w\Big)\d{1}w\bigg\}dx_1
	+e^{-4\lambda}\bigg\{-\left(\s{\partial^2_{x_2}\phi}{\d{1}\phi}\d{1}w+\s{\partial^2_{x_2}\phi}{\d{2}\phi}\d{2}\w\right)\d{1}w\\
	&+\left(\s{\partial^2_{x_1,x_2}\phi}{\d{1}\phi}\d{1}w+\s{\partial^2_{x_1,x_2}\phi}{\d{2}\phi}\d{2}w\right)\d{2}w\bigg\}dx_2\\
	&=\frac{1}{2}e^{-4\lambda}|dw|^2\left(\d{1}e^{2\lambda}dx_2-\d{2}e^{2\lambda}dx_1\right)\\
	&=\frac{1}{2}e^{-4\lambda}|dw|^2\star de^{2\lambda}=\frac{1}{2}(e^{-2\lambda}\star de^{2\lambda})|dw|_g^2
	\end{align*}
	The remaining terms are
	\begin{align*}
	&e^{-2\lambda}\left\{\left(-\partial^2_{x_1}w\,\d{2}w+\partial^2_{x_1,x_2}w\,\d{1}w\right)dx_1+\left(\partial^2_{x_2}w\,\d{1}w-\partial^2_{x_1,x_2}w\,\d{2}w\right)dx_2\right\}=\\
	&=e^{-2\lambda}\left(\partial^2_{x_1}w(-\d{2}wdx_1)+\partial_{x_2}^2w(\d{1}wdx_2)\right)+\frac{1}{2}e^{-2\lambda}\left(\d{2}|\d{1}w|^2dx_1-\d{1}|\d{2}w|^2dx_2\right)\\
	&=e^{-2\lambda}\left(\partial^2_{x_1}w(\star dw-\d{1}wdx_2)+\partial_{x_2}^2w(\star dw+\d{2}wdx_1)\right)+\frac{1}{2}e^{-2\lambda}\left(\d{2}|\d{1}w|^2dx_1-\d{1}|\d{2}w|^2dx_2\right)\\
	&=\Delta_g\, w(\star dw)+\frac{1}{2}e^{-2\lambda}\left(\d{2}(|\d{1}w|^2+|\d{2}w|^2)dx_1-\d{1}(|\d{1}w|^2+|\d{2}w|^2)dx_2\right)\\
	&=\Delta_g w\,\star dw-\frac{1}{2}e^{-2\lambda}\star d|dw|^2
	\end{align*}
	and
	\begin{align*}
	2Kw(-\d{2}wdx_1+\d{1}wdx_2)=2Kw\star dw
	\end{align*}
	And we have
	\begin{align*}
	e^{-2\lambda}\star d |dw|^2=e^{-2\lambda}\star d(e^{2\lambda}|dw|_g^2)=(e^{-2\lambda}\star de^{2\lambda})|dw|_g^2+\star d|dw|_g^2
	\end{align*}
	so we deduce that
	\begin{align*}
	\frac{d}{dt}\left(\vec{\I}\res \star dw\right)_{|t=0}&=\frac{1}{2}(e^{-2\lambda}\star de^{2\lambda})|dw|_g^2+\Delta_gw \star dw-\frac{1}{2}\left(\left(e^{-2\lambda}\star de^{2\lambda}\right)|dw|_g^2+\star d|dw|_g^2\right)+2Kw\star dw\\
	&=(\Delta_gw+2Kw)\star dw-\frac{1}{2}\star d|dw|_g^2.
	\end{align*}
	This concludes the proof of the lemma.
\end{proof}

\section{Index estimate}\label{proofmain}

\subsection{Introduction and definitions}

According to the classification established by Robert Bryant in \cite{bryant}, we can relate the theory of Willmore surfaces with the theory of minimal surfaces. We will give first some precisions about the latest notion.
Let $\phi:\bar{\Sigma}\rightarrow\R^3$ a minimal isometric immersion from an orientable Riemann surface $\bar{\Sigma}$. By minimal we mean that the mean curvature tensor $\vec{H}_g$ (where $g$ is the pull-back by $\phi$ of the euclidean metric on $\bar{\Sigma}$) of $\phi$ is identically zero. We say that $M=\phi(\bar{\Sigma})$ has finite total curvature if
\[
C(M)=\int_{\bar{\Sigma}}|K_g|d\vg=\int_{\bar{\Sigma}}-K_gd\vg<\infty.
\]
We first recall the following theorem.
\begin{theorem}\label{rappel}
	\begin{itemize}
		\item[1)] \textnormal{(\cite{huber})} $\bar{\Sigma}$ is conformally diffeomorphic to a compact Riemann surface $\Sigma$ with a finite number a points removed, called the ends of $\bar{\Sigma}$. We say that an end $p\in \Sigma$ is embedded, if there exists a radius $r>0$, such that $\phi|D^2(p,r)\setminus \ens{p}$ is an embedding.
		\item[2)] \textnormal{(\cite{schoenPlanar})} If $p\in\Sigma$ is an embedded end, and $r>0$ is such that $\phi|D^2(p,r)\setminus \ens{p}$ is an embedding, then there exists $a,b\in\R$ and $c\in\R^2$ such that, up to rotation and translation
		\begin{align*}
			\phi(D^2(p,r)\setminus\ens{p})=\ens{(x,y):y=a\log|x|+b+\frac{c\cdot x}{|x|^2}+O\left(\frac{1}{|x|^2}\right),\;x\in\R^2\setminus D^2(0,1)}.
		\end{align*}
		We say that the end is planar (or has zero logarithmic growth) if $a=0$.
		\item[3)] \textnormal{(\cite{osserman})} $C(M)$ is an integer multiple of $2\pi$, and furthermore, if all ends of $\bar{\Sigma}$ are embedded, and $\bar{\Sigma}$ has $m$ ends, then
		\begin{equation}
			C(M)=\int_{\Sigma}-K_gd\vg=4\pi\left(m+\gamma-1\right).
		\end{equation} 
		if the genus of $\Sigma$ is $\gamma$.
	\end{itemize}
\end{theorem}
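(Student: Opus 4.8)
The three assertions are classical, so the plan is to recall the arguments behind them rather than to reprove everything from scratch. For item 1) I would first note that minimality gives $K_g\leq 0$, so the finite total curvature hypothesis is exactly $\int_{\bar{\Sigma}}|K_g|\,d\vg<\infty$, which is precisely the hypothesis of Huber's theorem; this already yields that $\bar{\Sigma}$ is of finite topological type, i.e. homeomorphic to a closed surface $\Sigma$ with finitely many points removed. To upgrade this to a biholomorphism with $\Sigma\setminus\{p_1,\dots,p_m\}$ I would check that each end is conformally a punctured disk $\{0<|z|<1\}$: on an end the Dirichlet energy of the Gauss map $G:\bar{\Sigma}\rightarrow S^2$ is a constant multiple of $\int(-K_g)\,d\vg$, and finiteness of this energy forces, following Osserman, the end to be parabolic and hence conformally the punctured disk, across which the Weierstrass data $(G,\eta)$ of $\phi$ extend meromorphically. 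The notion of an embedded end is purely local, so nothing else is needed there.

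For item 2) I would work in the punctured-disk coordinate $z$ at $p$ furnished by 1) together with the Weierstrass representation $\phi=\Re\int(\cdots)$. Embeddedness forces the extended Gauss map to attain $G(p)$ with multiplicity one; after a rotation placing $G(p)$ at the north pole, I would expand the Weierstrass pair in a Laurent series at $z=0$ and integrate term by term. The antiderivative produces a genuine $\log|x|$ term precisely from the residue of the height differential, and collecting the surviving contributions gives, in a suitable asymptotic chart over the limit tangent plane, the stated expansion $y=a\log|x|+b+c\cdot x/|x|^2+O(|x|^{-2})$, with $a$ a real multiple of that residue; the end is planar, by definition, exactly when $a=0$. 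Once 1) is available this is essentially a bookkeeping computation with Laurent series — this is Schoen's expansion.

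For item 3), since $-K_g\,d\vg$ is the pullback by the Gauss map of the area form of $S^2$, the meromorphic extension from 1) gives $\int_{\bar{\Sigma}}(-K_g)\,d\vg=4\pi\deg(\bar{G})$ with $\bar{G}:\Sigma\rightarrow S^2$ the extension, so in particular $C(M)$ is an integer multiple of $2\pi$. To compute the precise value I would apply the Gauss--Bonnet formula on the truncated surface $\Sigma_R=\Sigma\setminus\bigcup_{j=1}^m D_{\Sigma}^2(p_j,R)$,
\[
\int_{\Sigma_R}K_g\,d\vg+\int_{\partial\Sigma_R}\kappa_g\,ds=2\pi\chi(\Sigma_R)=2\pi(2-2\gamma-m),
\]
and let $R\rightarrow 0$: finite total curvature makes the interior term converge to $\int_{\bar{\Sigma}}K_g\,d\vg$, while by the asymptotics of 2) each embedded end, being of multiplicity one, contributes $+2\pi$ to the boundary integral. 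Hence $\int_{\bar{\Sigma}}K_g\,d\vg=2\pi(2-2\gamma-m)-2\pi m=-4\pi(m+\gamma-1)$, that is $C(M)=4\pi(m+\gamma-1)$.

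The main obstacle is item 1): the conformal classification of the ends — equivalently, the meromorphic extension of the Weierstrass data across the punctures — is the only place where finiteness of the total curvature is genuinely used, and both Schoen's expansion in 2) and the exact value of the constant in 3) rest on it.
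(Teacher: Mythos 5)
The paper offers no proof of this theorem: it is stated purely as a recollection of classical results with citations to Huber, Schoen and Osserman, and your sketch faithfully reproduces the standard arguments from exactly those sources (Huber's conformal finiteness of complete surfaces with integrable negative part of the curvature, the meromorphic extension of the Weierstrass data across the punctures, Schoen's graphical expansion of an embedded end, and the Gauss--Bonnet/Jorge--Meeks count in which each embedded end contributes $2\pi$ of limiting boundary geodesic curvature). The only remark worth adding is that your own degree argument in item 3) actually yields that $C(M)$ is an integer multiple of $4\pi$ for an orientable surface, which is stronger than the $2\pi$ divisibility asserted in the statement.
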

We at present able to state the fundamental theorem of Robert Bryant of interest here.
\begin{theorem}\label{theobryant}[Bryant] Let $\psi:S^2\rightarrow \R^3$ a Willmore immersion, then either $\psi$ is totally umbilic, either $\psi$ is the inversion of a minimal surface with finite total curvature and planar ends.
\end{theorem}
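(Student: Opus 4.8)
The plan is to follow Bryant's original argument, whose engine is the conformal Gauss map. Realise $S^3$ as the projective light cone of $\R^{4,1}$ and attach to the conformal immersion $\psi$ its \emph{conformal Gauss map} $Y\colon S^2\to S^{3,1}$ into the de Sitter space $S^{3,1}=\{y\in\R^{4,1}:\langle y,y\rangle=1\}$, which sends a point of $S^2$ to the oriented mean curvature sphere of $\psi$ there. One checks that $Y$ is conformal for the conformal structure underlying $\psi$, that it is an immersion away from the umbilic points of $\psi$ (where it may branch), and that its de Sitter area differs from $\W(\psi)$ by a topological constant. The first step is the Blaschke--Thomsen identity: $\psi$ is Willmore if and only if $Y$ is a harmonic map, equivalently a (possibly branched) conformal minimal immersion $S^2\to S^{3,1}$. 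I would obtain this by transporting the first-variation computation of Section~\ref{sectionW} to the target $S^{3,1}$ (the Lorentzian signature changes nothing in the Euler--Lagrange manipulation), which identifies $\delta\W$ with the tension field of $Y$.

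The core of the proof is a vanishing statement. Fix a conformal coordinate $z$ on $S^2$; then $\langle Y,Y\rangle\equiv 1$ and conformality gives $\langle Y_z,Y_z\rangle\equiv 0$, while harmonicity into the umbilic quadric $S^{3,1}$ reads $Y_{z\bar z}=\rho\,Y$ for a real function $\rho$. Differentiating $\langle Y,Y\rangle=1$ twice and using conformality gives $\langle Y_{zz},Y\rangle=0$, and differentiating $\langle Y_z,Y_z\rangle=0$ gives $\langle Y_{zz},Y_z\rangle=0$, whence
\begin{align*}
\partial_{\bar z}\langle Y_{zz},Y_{zz}\rangle=2\langle\partial_z(\rho Y),Y_{zz}\rangle=2\rho_z\langle Y,Y_{zz}\rangle+2\rho\langle Y_z,Y_{zz}\rangle=0 .
\end{align*}
The same two vanishings show that $\mathcal Q:=\langle Y_{zz},Y_{zz}\rangle\,dz^4$ is independent of the conformal coordinate, hence a globally defined holomorphic quartic differential on $S^2$ (this is Bryant's quartic form, which can also be expressed through the Hopf differential of $\psi$ and the conformal factor). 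Since $K_{S^2}^{\otimes 2}=\mathcal O(-4)$ carries no nonzero global holomorphic section, $\mathcal Q\equiv0$: the conformal Gauss map of a Willmore sphere is \emph{isotropic}, $\langle Y_{zz},Y_{zz}\rangle\equiv0$.

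It then remains to translate isotropy into the geometry of $\psi$. Away from the finitely many umbilic points, $\mathcal Q\equiv0$ is precisely the condition that the mean curvature sphere congruence be special, so that besides $\psi$ it admits a second envelope $\widehat\psi$, itself Willmore (the dual surface). The case distinction is: either $\widehat\psi$ degenerates to a single point, which is equivalent to $Y$ being constant, i.e.\ to $\psi$ being totally umbilic; or, using isotropy (a Weierstrass-type representation of isotropic harmonic maps, together with the local normal form of $Y$ at its branch points, which are the umbilics of $\psi$) one shows that on $S^2$ the second envelope must reduce to a point $q$ through which \emph{every} mean curvature sphere of $\psi$ passes. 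Inverting at $q$ turns every mean curvature sphere into a plane, by Möbius-equivariance of the conformal Gauss map, and a surface all of whose mean curvature spheres are planes has $\vec{H}\equiv0$, i.e.\ is minimal in $\R^3$. As any two Möbius transformations of $S^3$ carrying $q$ to $\infty$ differ by a Euclidean similarity, which preserves minimality, we may take the inversion to be $i$ (at $0$), so that $\psi=i\circ\phi$ with $\phi$ minimal, up to translation. Finally, $\W(\psi)=\int_{S^2}H^2\,d\vg<\infty$ and the pointwise conformal invariance of $(H^2-K)\,d\vg$ force $\phi$ to have finite total curvature; Theorem~\ref{rappel} then presents $\phi$ on the compact surface $S^2$ itself (the domain of $\psi$) minus finitely many ends; since $\psi$ is an unbranched immersion near each preimage of $\infty$, Schoen's asymptotic expansion in Theorem~\ref{rappel} forces these ends to be embedded, and the smooth immersed local form of $\psi$ at such a point makes the logarithmic growth of the corresponding end vanish, so the ends are planar.

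The main obstacle is the case distinction just sketched: deducing from ``$Y$ is an isotropic conformal harmonic map $S^2\to S^{3,1}$'' that $\psi$ is Möbius-equivalent to a minimal surface genuinely requires Bryant's study of the dual surface and of the behaviour of the conformal Gauss map at its branch points, which goes well beyond the variational formulas of Sections~\ref{sectionW} and \ref{sectionK}. The concluding local analysis near the ends --- embeddedness and, above all, planarity --- likewise rests on the removable-singularity and asymptotics theory of Huber, Osserman and Schoen recalled in Theorem~\ref{rappel}, not on anything derived in the present sections.
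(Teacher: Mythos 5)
The paper does not prove this statement at all: Theorem \ref{theobryant} is quoted as Bryant's classification and used as a black box, with the citation \cite{bryant} standing in for the proof. Your outline is therefore not being measured against an argument in the paper; it is, however, a faithful summary of Bryant's original route. The parts you actually carry out are correct: the conformal Gauss map $Y$ into de Sitter space is conformal for the structure underlying $\psi$ and harmonic exactly when $\psi$ is Willmore; the computation
$\partial_{\bar z}\langle Y_{zz},Y_{zz}\rangle=2\rho_z\langle Y,Y_{zz}\rangle+2\rho\langle Y_z,Y_{zz}\rangle=0$
uses only $\langle Y,Y\rangle=1$, conformality $\langle Y_z,Y_z\rangle=0$ and $Y_{z\bar z}=\rho Y$, and is right; and the absence of nonzero holomorphic quartic differentials on $S^2$ forces $\mathcal Q\equiv 0$.

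The genuine gap is the step you yourself flag: passing from the isotropy $\langle Y_{zz},Y_{zz}\rangle\equiv 0$ to the conclusion that either $Y$ is constant (totally umbilic case) or there is a single point $q$ lying on every mean curvature sphere, so that inversion at $q$ renders $\psi$ minimal. This requires the analysis of the dual envelope, the local normal form of $Y$ at its branch points (the umbilics of $\psi$), and the degeneration argument specific to genus zero; these constitute the technical core of Bryant's paper and are only named, not established, in your proposal. So as a self-contained proof the attempt is incomplete at precisely the point where the paper also delegates to \cite{bryant}; as a description of the cited argument it is accurate. Your concluding treatment of the ends --- finite total curvature from $\W(\psi)<\infty$ together with the conformal invariance of $(H_g^2-K_g)\,d\vg$, then embeddedness and planarity of the ends from the fact that $\psi$ is a smooth unbranched immersion at the preimages of the centre of inversion, via the asymptotics recalled in Theorem \ref{rappel} --- is the correct way to finish and is consistent with how the paper uses these facts.
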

%Finally, thanks of a result of Fischer-Colbrie, a minimal surface in $\R^3$ has finite index if and only if it had finite index. Our aim is 

\subsection{Second variation of conformal Willmore functional}

We first define the index for the Willmore functional.
\begin{defi}
	Let $\psi\in\im\cap C^{\infty}(\Sigma,N^n))$ a Willmore surface. Then the second variation $D^2W$ of $W=W_{N^n}$ is well defined by 
	\begin{align*}
	D^2W(\psi)[\w,\w]=\frac{d^2}{dt^2}W(\psi_t)_{|t=0}.
	\end{align*}
	for all $\{\psi_t\}_{t\in I}\in C^2(I,\im\cap C^{\infty}(\Sigma,N^n))$ is a $C^2$ family of immersions such that $\psi_0=\psi$, and $\w=\frac{d}{dt}(\psi_t)_{|t=0}$.
	The index of a critical point $\psi\in\im\cap\mathrm{W}^{1,\infty}(\Sigma,N^n)$ of the Willmore functional $W$, noted $\mathrm{Ind}_W(\psi)$, is defined as the dimension of the subspace of $\tim{\psi}\cap\mathrm{W}^{1,\infty}(\Sigma,TN^n)$ where the second derivative $D^2W(\psi)$ is negative definite. We define in analogous way the $\W$ index, noted $\mathrm{Ind}_{\W}(\psi)$.
\end{defi}

 Let $\Sigma$ a closed Riemann surface and $\psi:\Sigma\rightarrow\R^3$ a non-banched Willmore immersion which is the inversion of a minimal surface with planar ends. Then up to translation there exists an integer $m$, $\ens{p_1,\cdots,p_m}\subset \Sigma$ a finite set of $m$ distinct points, and a minimal isometric immersion $\phi:\Sigma\rightarrow\R^3$, such that $M=\phi(\bar{\Sigma})$ ($\bar{\Sigma}={\Sigma}\setminus\ens{p_1,\cdots,p_m}$) is a complete minimal surface with finite total curvature, and planar ends, and if $i:\R^3\setminus\ens{0}\rightarrow\R^3\setminus\ens{0}$ is the inversion at $0$, given by
\begin{align*}
	i(x)=\frac{x}{|x|^2},\quad \forall x\in\R^3\setminus\ens{0},
\end{align*}
then we have $\psi=i\circ\phi$. By conformal invariance, noting $\W=\W_{\R^3}$, we have
\begin{align*}
	\W(\phi)=\W(\psi).
\end{align*}
So by theorem \eqref{rappel}, we have
\begin{align*}
	W(\psi)=\W(\psi)+2\pi\chi(\Sigma)=C(M)+2\pi\chi(\Sigma)=4\pi m,
\end{align*}
which shows that Willmore energy is quantized by $4\pi$. Another interpretation of this integer $m$ is furnished by a theorem of Peter Li and Shing-Tung Yau \cite{lieyau}. They proved that if $x\in \R^3$, and $\psi^{-1}(\ens{x})=k\in\N$, then
\begin{align*}
	W(\psi)\geq 4\pi k
\end{align*}
so $m$ is the maximum number if preimages under $\psi$ of points in $\R^3$. Furthermore, thanks our normalisation, we see that $0$ has $m$ premiages by $\psi$.

Now, as $M$ is minimal, $\phi$ is a Willmore immersion. 
Now, we remark that for all admissible variation $\{\psi\}_{t\in I}$, we have
\begin{align*}
\mathscr{W}(\psi_t)=\int_{S^2}(H^2_{g_t}-K_{g_t})d\mathrm{vol}_{g_t}=\int_{S^2}H^2_{g_t}d\mathrm{vol}_{g_t}-2\pi\chi(\Sigma)
\end{align*}
so, by conformal invariance of $\mathscr{W}$, for a normal variation $\vec{v}=v\n$ of $\psi$, we have
\begin{align*}
	D^2W(\psi)[\vec{v},\vec{v}]=D^2\W(\psi)[\vec{v},\vec{v}]=\frac{d^2}{dt^2}\W(\psi_t)=\frac{d^2}{dt^2}\W(\iota\circ \psi_t)=D^2\W(\phi)[\w,\w].
\end{align*}
if $\w=|\phi|^2\vec{v}$.
Indeed, we have
\begin{align*}
	\frac{d}{dt}{\left(\psi_t\right)}_{|t=0}=\vec{v}=v\n,
\end{align*}
so (as $\vec{v}$ is normal),
\begin{align*}
	\frac{d}{dt}{\left(\iota\circ\psi_t\right)}_{|t=0}=\frac{\vec{v}}{|\psi|^2}-2\frac{\phi\cdot \vec{v}}{|\psi|^4}=\frac{\vec{v}}{|\psi|^2}=|\phi|^2\vec{v}.
\end{align*}

By \eqref{d2minfacile} and lemma \ref{d2k}, we have the following formula.

\begin{theorem}\label{der2}
	Let $\Sigma$ a Riemann surface, an $\phi:\Sigma\rightarrow \R^3$ a smooth minimal immersion. Then for all smooth normal variation $\w=w\n$, we have
	\begin{equation}
	D^2\mathscr{W}(\phi)[\w,\w]=\int_{\Sigma}\left\{\frac{1}{2}(\Delta_gw-2Kw)^2d\vg-d\left(\left(\Delta_gw+2Kw\right)\star dw-\frac{1}{2}\star d |dw|_g^2\right)\right\}.\label{formula}
	\end{equation}
\end{theorem}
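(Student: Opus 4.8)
The plan is to use the conformal splitting $\W=\W_{\R^3}$, which by its very definition satisfies $\W(\phi)=W(\phi)-\int_\Sigma K_g\,d\vg$, and to differentiate each of the two summands twice along a single, well chosen variation. Since $\phi$ is minimal — hence Willmore — the second variation $D^2W(\phi)$ does not depend on the path, and both \eqref{d2minfacile} and Lemma \ref{d2k} were established in the gauge $\D_{\w}\w=0$; in $\R^3$ the covariant derivative is the ordinary one, so this gauge is realised by the affine variation $\phi_t=\phi+t\,w\n$. Working with this $\phi_t$, differentiating the identity $\W(\phi_t)=W(\phi_t)-\int_\Sigma K_{g_t}\,d\mathrm{vol}_{g_t}$ twice at $t=0$ gives
\begin{equation*}
D^2\W(\phi)[\w,\w]=D^2W(\phi)[\w,\w]-\int_\Sigma\frac{d^2}{dt^2}\bigl(K_{g_t}\,d\mathrm{vol}_{g_t}\bigr)_{|t=0}.
\end{equation*}

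For the first term I would invoke \eqref{d2minfacile} with $N^3=\R^3$, so that $\mathrm{Ric}_{\R^3}\equiv 0$, together with the fact that a minimal surface in $\R^3$ has opposite principal curvatures, whence $|\vec{\I}_g|^2=-2K_g$ (compatibly with $|\vec{\I}_g|^2=4H_g^2-2K_g$ from Section \ref{sectionW}). This turns \eqref{d2minfacile} into $D^2W(\phi)[\w,\w]=\tfrac12\int_\Sigma(\Delta_gw-2K_gw)^2\,d\vg$. For the second term I would simply quote Lemma \ref{d2k}, which gives $\frac{d^2}{dt^2}(K_{g_t}\,d\mathrm{vol}_{g_t})_{|t=0}=d\bigl((\Delta_gw+2K_gw)\star dw-\tfrac12\star d|dw|_g^2\bigr)$. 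Substituting both expressions into the displayed identity yields precisely \eqref{formula}.

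There is no genuine analytic obstacle here: \eqref{d2minfacile} already delivers a pointwise density and Lemma \ref{d2k} a pointwise identity of $2$-forms, so \eqref{formula} is itself an identity of $2$-forms and no justification is needed for exchanging $\frac{d^2}{dt^2}$ with $\int_\Sigma$. The only points that require attention are the normalisation $|\vec{\I}_g|^2=-2K_g$ for minimal surfaces in $\R^3$ and the verification that the variation used in \eqref{d2minfacile} is the same as the one used in Lemma \ref{d2k} — both being the affine, equivalently $\D_{\w}\w=0$, variation — so that the two second derivatives add up correctly; everything else is bookkeeping.
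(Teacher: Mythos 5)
Your proof is correct and follows essentially the same route as the paper: its proof of this theorem is precisely the one-line combination of \eqref{d2minfacile} (specialised to $N^3=\R^3$, where $\mathrm{Ric}\equiv 0$ and minimality gives $|\vec{\I}_g|^2=4H_g^2-2K_g=-2K_g$) with Lemma \ref{d2k}, applied to the decomposition $\W=W-\int_\Sigma K_g\,d\vg$. Your extra check that both ingredients are computed in the same gauge $\D_{\w}\w=0$ (the affine variation $\phi_t=\phi+tw\n$ in $\R^3$) is a sensible point that the paper leaves implicit.
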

 By the previous discussion, $\mathrm{Ind}_{\W}(\psi)$ is equal to the index of $\phi$ for normal variations of the form $\w=|\phi|^2v\n$. A remarkable fact is that one can estimate the index by computing explicitly the integral involving the residue term.

\subsection{Explicit formula for the second derivative of inversions of minimal surfaces}

We can state the main result of this section, from which theorem \ref{main} will be an easy consequence .

\begin{theorem}
	Let $\Sigma$ a closed Riemann surface and $\phi:\Sigma\setminus\ens{p_1,\cdots , p_m}\rightarrow \R^3$ a minimal immersion with $m$ embedded planar ends, such that $\psi=i\circ \phi:\Sigma\rightarrow \R^3$ is a non-branched Willmore immersion. Then for all normal variation $\vec{v}=v\n$ of $\psi$, we have
	\begin{equation}\label{magique}
	D^2\mathscr{W}(\psi)[\vec{v},\vec{v}]=\lim_{R\rightarrow 0}\frac{1}{2}\int_{\Sigma_R}(\Delta_g w-2K_gw)^2d\vg-4\pi\sum_{j=1}^m\frac{\mathrm{Res}_{p_j}\phi}{R^2}\,v(p_j)^2
	\end{equation}
	if $w=|\phi|^2v$, and $g=\phi^{\ast}g_{\,\R^3}$, and
	$
	\displaystyle\Sigma_R=\Sigma\setminus\bigcup_{j=1}^{m}D_{\Sigma}^2(p_j,R).
	$
\end{theorem}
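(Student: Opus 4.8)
\emph{Proof proposal.} The plan is to transport the problem through the inversion $i$ to the minimal surface $\phi$ and then read off the answer from formula \eqref{formula}. Since the $2$–form $(|\H_g|^2-K_g)\,\vg$ is pointwise invariant under conformal maps of $\R^3$ and $\psi=i\circ\phi$, the families of densities attached to $\psi_t$ and to $\phi_t=i\circ\psi_t$ agree pointwise on $\Sigma\setminus\{p_1,\dots,p_m\}$; differentiating twice at $t=0$ shows that the (smooth, integrable on the closed surface $\Sigma$) second–variation density of $\W$ at $\psi$, expressed in the variables $\psi,\vec v$, coincides on $\Sigma\setminus\{p_1,\dots,p_m\}$ with the integrand of \eqref{formula} applied to the minimal immersion $\phi$ with the normal field $\w=w\,\n$, $w=|\phi|^2v$. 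Hence, by dominated convergence,
\[
D^2\W(\psi)[\vec v,\vec v]=\lim_{R\to 0}\int_{\Sigma_R}\Bigl\{\tfrac12(\Delta_g w-2K_g w)^2\,\vg-d\omega\Bigr\},\qquad \omega:=(\Delta_g w+2K_g w)\star dw-\tfrac12\star d|dw|_g^2 .
\]
Applying Stokes' theorem on the compact surface with boundary $\Sigma_R=\Sigma\setminus\bigcup_j D^2_\Sigma(p_j,R)$, and noting that each boundary circle $\{|z|=R\}$ about $p_j$ inherits the \emph{clockwise} orientation, this becomes
\[
D^2\W(\psi)[\vec v,\vec v]=\lim_{R\to 0}\Bigl(\tfrac12\int_{\Sigma_R}(\Delta_g w-2K_g w)^2\,\vg+\sum_{j=1}^m\oint_{\{|z|=R\}}\omega\Bigr),
\]
so everything reduces to the $R\to 0$ asymptotics of the boundary integrals.

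To compute them, I would fix an end $p_j$ and a conformal coordinate $z=\rho e^{i\theta}$ centred there. By the Weierstrass–Enneper representation and the hypothesis on $f$, write $\phi=\Re f$ with $f(z)=\vec a/z+(\text{holomorphic})$, $\vec a=\Res_{p_j}f(z)\,dz$; the conformality relation $\sum_k(f'_k)^2=0$ forces $\vec a$ to be isotropic ($|\Re\vec a|=|\Im\vec a|$, $\Re\vec a\perp\Im\vec a$), so $|\vec a|^2=2|\Re\vec a|^2=\Res_{p_j}\phi$ and a short computation gives the $\theta$–independent leading behaviour $|\phi|^2=\tfrac{\Res_{p_j}\phi}{2\rho^2}+O(\rho^{-1})$ and $e^{2\lambda}=\tfrac{\Res_{p_j}\phi}{2\rho^4}(1+O(\rho^2))$. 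The decisive identity is $\Delta_g|\phi|^2=2|d\phi|_g^2+2\s{\phi}{\Delta_g\phi}=4$, valid on all of $\Sigma\setminus\{p_1,\dots,p_m\}$ because $\phi$ is conformal and harmonic; together with the smoothness of $v$ it yields
\[
w=\frac{\Res_{p_j}\phi}{2\rho^2}\,v(p_j)+O(\rho^{-1}),\quad \Delta_g w=4v(p_j)+O(\rho^{2}),\quad \star dw=-\frac{\Res_{p_j}\phi\,v(p_j)}{\rho^2}\,d\theta+\cdots,\quad |dw|_g^2=\frac{2\Res_{p_j}\phi\,v(p_j)^2}{\rho^2}+\cdots,
\]
where each remainder is of lower order in $\rho$ and has vanishing average over $\theta$: the $\rho^{-1}$–coefficients are first harmonics, and the $O(\rho^2)$ correction to $\Delta_g w$ has the form $\delta(\theta)\rho^2$ with $\int_0^{2\pi}\delta\,d\theta=0$ by periodicity. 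Finally, the planarity of the end provides the extra decay $K_g=O(\rho^{6})$ (equivalently, the second fundamental form of $\phi$ decays like $\rho^{3}$ near the end), which follows from the graphical expansion of Theorem \ref{rappel}(2) with vanishing logarithmic growth; hence the $2K_g w$ terms are irrelevant in the limit. Substituting into $\omega$ and integrating over $\{|z|=R\}$, only the displayed leading terms survive and produce
\[
\oint_{\{|z|=R\}}\omega=\Bigl(-\frac{8\pi\,\Res_{p_j}\phi}{R^2}+\frac{4\pi\,\Res_{p_j}\phi}{R^2}\Bigr)v(p_j)^2+o(1)=-\frac{4\pi\,\Res_{p_j}\phi}{R^2}\,v(p_j)^2+o(1);
\]
inserting this into the Stokes identity above gives \eqref{magique}.

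The hard part will be the local analysis near the ends: one must carry the expansions far enough to see simultaneously that (i) the leading $R^{-2}$ coefficient is \emph{exactly} $-4\pi\,\Res_{p_j}\phi\,v(p_j)^2$ (this is where the isotropy of $\vec a$ and the identity $\Delta_g|\phi|^2=4$ enter), (ii) the a priori divergent $R^{-1}$ contribution and the a priori finite $O(1)$ contribution both cancel in the limit — which rests on the first–harmonic structure of the subleading coefficients and on $\int_0^{2\pi}\delta(\theta)\,d\theta=0$ — and (iii) the Gaussian–curvature terms are negligible, for which the \emph{planarity} of the ends (through $K_g=O(\rho^6)$) is indispensable; mere finiteness of the total curvature would not suffice. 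A more routine point is the first reduction: that passing through $i$ and truncating to $\Sigma_R$ loses no information, which follows from the pointwise conformal invariance of the second–variation density off $\{0\}$ and its $L^1$ character on $\Sigma$.
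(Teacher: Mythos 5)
Your overall strategy is the same as the paper's: reduce to Theorem \ref{der2} by conformal invariance, apply Stokes on $\Sigma_R$, and extract the $R\to0$ asymptotics of the boundary integral at each planar end. Your execution of the leading order is correct and in fact cleaner than the paper's: working in the conformal coordinate at the puncture and using $\Delta_g|\phi|^2=2|d\phi|_g^2=4$ to get $\Delta_g w=4v(p_j)+\text{l.o.t.}$ immediately is a nice shortcut compared to the paper's passage to graph coordinates $x$ at infinity and its four-term decomposition $(1)+(2)+(3)+(4)$. Your leading coefficients ($\star dw\sim -\Res_{p_j}\phi\,v(p_j)\rho^{-2}d\theta$, $|dw|_g^2\sim 2\Res_{p_j}\phi\,v(p_j)^2\rho^{-2}$) and the resulting $(-8\pi+4\pi)\Res_{p_j}\phi\,v(p_j)^2/R^2$ agree with \eqref{finald2k} after the normalization $\Res_{p_j}\phi=2\alpha^2$, $r=\alpha/R$.

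The gap is in item (ii) of your closing paragraph. You justify the disappearance of the subleading boundary contributions by asserting that the relevant coefficients have vanishing $\theta$-average ("first harmonics", "$\int_0^{2\pi}\delta\,d\theta=0$"). That mechanism does dispose of the $O(R^{-1})$ terms, but it cannot dispose of the $O(1)$ terms: those arise from \emph{products} of first harmonics (e.g.\ $\s{dv(p_j)}{\omega(\theta)}^2$-type terms) and from second-order Taylor coefficients of $v$, whose $\theta$-averages are generically nonzero. Indeed the paper's explicit computation shows that each constituent of $\omega$ contributes a nonzero $O(1)$ amount — for instance the analogue of your $-\tfrac12\star d|dw|_g^2$ piece contributes $8\pi r^2v(p)^2+4\pi\left(v(p)\Delta v(p)+|dv(p)|^2\right)$, and the other pieces contribute $4\pi v(p)\Delta v(p)$ and $-2\pi\left(|dv(p)|^2+2v(p)\Delta v(p)\right)$ twice — and these finite parts cancel only when \emph{summed} across all the terms of $\omega$, not individually and not by averaging. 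So to complete your argument you must carry the expansion of $v$ to second order at $p_j$, expand $(\Delta_gw+2K_gw)\star dw$ and $\tfrac12\star d|dw|_g^2$ to order $O(1)$ on the circle, and verify this cancellation explicitly (this is the bulk of the paper's proof); the structural reason you offer would not, by itself, yield it. The remaining points — the reduction $D^2\W(\psi)[\vec v,\vec v]=D^2\W(\phi)[\w,\w]$ via pointwise conformal invariance of $(|\H_g|^2-K_g)\vg$, the orientation bookkeeping in Stokes, and the negligibility of the $2K_gw$ terms from $K_g=O(\rho^6)$ at a planar end — match the paper and are fine.
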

\begin{proof}
Let $p\in \Sigma$ an end of $M=\phi(\Sigma)$.
Thanks of a theorem of Richard Schoen (\cite{schoenPlanar} see also the paper of Robert Osserman \cite{osserman}) about the Weierstrass-Enneper parametrisation, as the end is embedded and planar by Bryant's theorem, there exists a radius $R_0>0$, and a local chart $D^2_{\R^2}(0,1)\rightarrow D^2_{S^2}(p,R_0)$, which sends $0$ to $p$ such that in this chart, we have
\begin{align*}
	\phi(u)=\Re\int_{}^{z}(\varphi_1,\varphi_2,\varphi_3)d\zeta
\end{align*}
where $z=u_1+iu_2\in D^2\setminus \ens{0}$, and for $1\leq j\leq 3$, $\varphi_j$ is an holomorphic function with a pole of order at most $2$ at $0$, and 
\begin{equation}\label{sumsquare}
\varphi_1^2+\varphi_2^2+\varphi_3^2=0
\end{equation}
Assuming up to rotation that the asymptotic normal is $(0,0,1)$, this translates to
\begin{align*}
	\varphi_1(z)=\frac{a_1}{z^2}+O(1),\quad\varphi_2(z)=\frac{a_2}{z^2}+O(1),\quad \varphi_3(z)=O(1)
\end{align*}
where $a_1,a_2\in \C\setminus\ens{0}$, and condition \eqref{sumsquare} shows that $a_1^2+a_2^2=0$. Therefore, if we define $f$ by
\begin{align*}
	f(z)=\int^{z}(\varphi_1,\varphi_2,\varphi_3)d\zeta
\end{align*}
we have
\begin{align*}
	f(z)=\left(-\frac{a_1}{z}+O(1),-\frac{a_2}{z}+O(1),O(|z|)\right)
\end{align*}
and 
\begin{align*}
	\mathrm{Res}_{\,0}f(z)dz=\left(-a_1,-a_2,0\right)
\end{align*}
and as $a_1^2+a_2^2=0$, we have
\begin{align*}
	|\mathrm{Res}_{\,0} f(z)dz|^2=2|a_1|^2=2|a_2|^2
\end{align*}
so assuming up to a change of coordinate that $a_1=-\alpha\in\R\setminus\ens{0}$, $a_2=-i\alpha$, and
\begin{align*}
\phi(u)=\left(\alpha\frac{u}{|u|^2}+O(|u|), \beta\cdot u+O(|u|^2)\right), \quad \forall u\in D^2\setminus\ens{0},
\end{align*}
where $\beta\in\R^2$. So the quantity
\begin{equation}\label{defresidu}
\mathrm{Res}_p(\phi)=2\alpha^2=|\mathrm{Res}_{\,0} f(z)dz|^2=2\lim_{|u|\rightarrow 0}|u|^2|\phi(u)|^2
\end{equation}
is well-defined and we call it the residue of $\phi$ at $p$.
As $R$ goes to $0$, for asymptotic formulas, we can assume that for all $0<R\leq R_0$, the chart sends the circle $S^1(0,R)\subset \R^2$ to the circle $S^1(p,R)$.
In the rest of the proof, we assume that the residue is equal to $1$, and we shall see that it will only appear as a multiplicative factor. Now, making a change of variable, we get new coordinates writing
\begin{align*}
	(x_1,x_2,x_3)=\phi(u)
\end{align*}
we get
\begin{align}\label{changevariable}
	u=\frac{1}{\alpha}\frac{x}{|x|^2}+O\left(\frac{1}{|x|^3}\right),
\end{align}
so in $(x_1,x_2)$ coordinates, we get
\begin{align*}
	\phi(x)=\left(x,a+\frac{b\cdot x}{|x|^2}+O\left(\frac{1}{|x|^2}\right)\right),\quad a\in\R,\;b\in\R^2
\end{align*}
where $x\in \R^2\setminus K$, where $K$ is a compact set containing $0$. The components of the induced metric $g$ in these coordinates are
\begin{align*}
	g_{i,j}(x)=\s{\d{i}\phi(x)}{\d{j}\phi(x)},\quad i,j=1,2.
\end{align*}
and
\begin{align*}
	\d{1}\phi(x)&=\left(1,0,\frac{b_1}{|x|^2}-2\frac{(b\cdot x)x_1}{|x|^4}+O\left(\frac{1}{|x|^3}\right)\right)\\
	\d{2}\phi(x)&=\left(0,1,\frac{b_2}{|x|^2}-2\frac{(b\cdot x)x_2}{|x|^4}+O\left(\frac{1}{|x|^3}\right)\right).
\end{align*}

We can differentiate under the $O$ sign, as the Weierstrass-Enneper parametrization shows that $\phi$ is locally the real part of a meromorphic function, therefore is it analytic outside of branch points, and the rest can be differentiated as the rest of a convergent series of power of $|x|^{-1}$.
These expressions show that we have conformity and a flat metric at infinity, as
\begin{align*}
	g_{1,2}(x)&=\frac{b_1b_2}{|x|^4}-2(b_1x_2+b_2x_1)\frac{b\cdot x}{|x|^6}+4x_1x_2\frac{(b\cdot x)^2}{|x|^8}+O\left(\frac{1}{|x|^5}\right)=O\left(\frac{1}{|x|^4}\right)\\
	&g_{i,i}(x)=1+\frac{\left(b_i|x|^2-2x_i(b\cdot x)\right)^2}{|x|^8}+O\left(\frac{1}{|x|^5}\right)=1+O\left(\frac{1}{|x|^4}\right),\quad i=1,2
\end{align*}
and
\begin{align*}
	\det g(x)&=1+\sum_{i=1}^2\frac{\left(b_i|x|^2-2x_i(b\cdot x)\right)^2}{|x|^8}+\frac{1}{|x|^{16}}(b_1|x|^2-2x_1(b\cdot x))^2(b_2|x|^2-2x_2(b\cdot x))^2+O\left(\frac{1}{|x|^5}\right)\\
	&-\left(\frac{b_1b_2}{|x|^4}-2(b_1x_2+b_2x_1)\frac{b\cdot x}{|x|^6}+4x_1x_2\frac{(b\cdot x)^2}{|x|^8}\right)^2+O\left(\frac{1}{|x|^9}\right)\\
	&=1+\sum_{i=1}^2\frac{\left(b_i|x|^2-2x_i(b\cdot x)\right)^2}{|x|^8}+O\left(\frac{1}{|x|^5}\right)=1+O\left(\frac{1}{|x|^4}\right).
\end{align*}
So we have
\begin{align}\label{flatmetric}
	g_{i,j}&=\delta_{i,j}+O\left(\frac{1}{|x|^4}\right)\nonumber\\
	d\vg(x)&=\sqrt{\det(g(x))}dx_1\wedge dx_2=\left(1+O\left(\frac{1}{|x|^4}\right)\right)dx_1\wedge dx_2
\end{align}
which proves asymptotic flatness.

Now, we have by Stokes theorem
\begin{equation}\label{stokes}
\int_{\Sigma_R}d\left((\Delta_gw+2Kw)\star dw-\frac{1}{2}\star d|dw|_g^2\right)=-\sum_{j=1}^m\int_{S^1_{\Sigma}(p_j,R)}\left((\Delta_gw+2Kw)\star dw-\frac{1}{2}\star d|dw|_g^2\right)
\end{equation}
where the circles $S^1_{\Sigma}(p_i,R)$ are \textit{positively} oriented (which explains the negative sign in front of the sum). Setting $r=\alpha (R)^{-1}$, the change of variable \eqref{changevariable} shows that
\begin{align*}
	\int_{S^1_{\Sigma}(p_j,R)}\left((\Delta_gw+2Kw)\star dw-\frac{1}{2}\star d|dw|_g^2\right)=\int_{S^1_{r}}\left((\Delta_gw+2Kw)\star dw-\frac{1}{2}\star d|dw|_g^2\right)+o(1)
\end{align*}
where $S^1_r$ is the circle in $\R^2$ of radius $r>0$. Indeed, if we make the change of variable 
\begin{align*}
	u=\frac{1}{\alpha}\frac{x}{|x|^2}
\end{align*}
then
\begin{align*}
	\phi(x)=\left(x+O\left(\frac{1}{|x|}\right),\tilde{a}+\frac{\tilde{b}\cdot x}{|x|^2}+O\left(\frac{1}{|x|^2}\right)\right)
\end{align*}
for some $\tilde{a}\in\R$, $\tilde{b}\in\R^2$. As we will see in the following, the error term in these coordinates where $\phi$ is almost a graph is irrelevant, so we discard it and use \eqref{changevariable} instead. Now thanks of \eqref{changevariable}
\begin{align*}
	w(x)=\frac{v\left(\frac{x}{|x|^2}\right)}{|\vec{\Psi}(x)|^2}=v\left(\frac{x}{|x|^2}\right)|\phi(x)|^2=v\left(\frac{x}{|x|^2}\right)\left(|x|^2+a^2+O\left(\frac{1}{|x|^2}\right)\right)
\end{align*}

We now have
\begin{align*}
    &\star dw(x)=\d{1}\left(\left(|x|^2+a^2+O(|x|^{-2})\right)v\left(\frac{x}{|x|^2}\right)\right)dx_2-\d{2}\left(\left(|x|^2+a^2+O(|x|^{-2})\right)v\left(\frac{x}{|x|^2}\right)\right)dx_1\\
	&=2v(i(x))\left(x_1dx_2-x_2dx_1\right)+O(|x|^{-3})v(i(x))(dx_2-dx_1)\\
	&+(|x|^2+a^2+O(|x|^{-2}))\bigg\{\left(\left(\frac{1}{|x|^2}-\frac{2x_1^2}{|x|^4}\right)\partial_1 v(i(x))-\frac{2x_1x_2}{|x|^4}\partial_2v(i(x))\right)dx_2\\
	&-\left(\left(\frac{1}{|x|^2}-\frac{2x_2^2}{|x|^4}\right)\partial_2 v(i(x))-\frac{2x_1x_2}{|x|^4}\partial_1v(i(x))\right)dx_1\bigg\}\\
	&=2v(i(x))(x_1dx_2-x_2dx_1)+\left(\frac{x_2^2-x_1^2}{|x|^2}\p{1}v-\frac{2x_1x_2}{|x|^2}\p{2}v\right)dx_2-\left(\frac{x_1^2-x_2^2}{|x|^2}\p{2}-\frac{2x_1x_2}{|x|^4}\p{1}v\right)dx_1+O\left(\frac{1}{|x|^2}\right)
\end{align*}

Now, by \eqref{flatmetric},
\begin{align*}
	\Delta_gw(x)&=\frac{1}{\sqrt{g(x)}}\sum_{i,j=1}^2\d{i}\left(\sqrt{g(x)}g^{i,j}(x)\d{j}\left(|\phi(x)|^2v(i(x))\right)\right)\\
	&=\frac{1}{\sqrt{g(x)}}\bigg(\d{1}\left((1+O(|x|^{-4})\d{1}((|x|^2+a^2+O(|x|^{-2}))v(i(x))))\right)\\
	&+\d{2}\left((1+O(|x|^{-4})\d{2}((|x|^2+a^2+O(|x|^{-2}))v(i(x))))\right)\\
	&+\d{1}\left(O(|x|^{-4})\d{2}((|x|^2+a^2+O(|x|^{-2}))v(i(x)))\right)\\
	&+\d{2}\left(O(|x|^{-4})\d{1}((|x|^2+a^2+O(|x|^{-2}))v(i(x)))\right)\bigg)
\end{align*}
Now, as $\star dw(x)=O(|x|)$, all terms in $\Delta_gw$ of lower order than $|x|^{-2}$ will vanish as $R\rightarrow\infty$, as
\begin{align*}
	\int_{S^1_r}\frac{1}{|x|^2}d\h^1(x)=\frac{2\pi}{r}\conv[r]0.
\end{align*}
Now, we note that
\begin{align*}
	v(i(x))=O(1),\quad dv(\iota(x))=O(|x|^{-2})
\end{align*}
so
\begin{align*}
	\d{i}\left(O(|x|^{-4})\d{j}\left((|x|^2+a^2+O(|x|^{-2})v(i(x))\right)\right)&=\d{i}\left(O(|x|^{-4})\left(2x_jv(i(x))+(|x|^2+a^2+|x|^{-2})\d{j}v(i(x))\right)\right)\\
	&=\d{i}(O(|x|^{-4})(O(|x|+O(1))))=O(|x|^{-4}),
\end{align*}
so only the flat Laplacian will remain in the end. We have
\begin{align*}
	\Delta \left(|x|^2+a^2+O(|x|^{-2})v(\iota(x))\right)&=(4+O(|x|^{-4}))v(\iota(x))+4\s{x+O(|x|^{-3})}{\D v(\iota(x))}\\
	&+(|x|^2+a^2+O(|x|^{-2}))\Delta v(\iota(x)).
\end{align*}
A simple computation will show that $\Delta v(\iota(x))=O(|x|^{-3})$, so we can discard all error terms, and the constant term $a^2$. So it suffices to compute the flat laplacian of $w$. We have
\begin{align*}
	\Delta w(i(x))=4v(i(x))+2\left(2x_1\d{1}v(i(x))+2x_2\d{2}v(i(x))\right)+|x|^2\Delta v(i(x))
\end{align*}
and
\begin{align*}
	x_1\d{1}v(i(x))+x_2\d{2}v(i(x))&=x_1\left(\frac{x_2^2-x_1^2}{|x|^4}\p{1}v-\frac{2x_1x_2}{|x|^4}\p{2}v\right)+x_2\left(\frac{x_1^2-x_2^2}{|x|^4}\p{2}v-\frac{2x_1x_2}{|x|^4}\p{1}v\right)\\
	&=\frac{1}{|x|^4}\left(x_1(x_2^2-x_1^2)-2x_1x_2^2\right)\p{1}+\frac{1}{|x|^4}\left(-2x_1^2x_2+x_2(x_1^2-x_2^2)\right)\\
	&=-\frac{1}{|x|^2}\left(x_1\p{1}v+x_2\p{2}v\right)\\
	&=-\s{dv(i(x))}{i(x))}
\end{align*}

We now compute (omitting the argument  $i(x)$)
\begin{align*}
\Delta v(i(x))&=\d{1}\left(\frac{x_2^2-x_1^2}{|x|^4}\p{1}v(i(x))-\frac{2x_1x_2}{|x|^4}\p{2}v(i(x))\right)+\d{2}\left(\frac{x_1^2-x_2^2}{|x|^4}\p{2}v(i(x))-\frac{2x_1x_2}{|x|^4}\p{1}v(i(x))\right)\\
&=\left(-\frac{2x_1}{|x|^4}-\frac{4x_1(x_2^2-x_1^2)}{|x|^6}\right)\p{1}v+\left(-\frac{2x_2}{|x|^4}+\frac{8x_1^2x_2}{|x|^6}\right)\p{2}v+\frac{x_2^2-x_1^2}{|x|^4}\left(\frac{x_2^2-x_1^2}{|x|^4}\p{1}^2v-\frac{2x_1x_2}{|x|^4}\p{12}^2v\right)\\
&-\frac{2x_1x_2}{|x|^4}\left(\frac{x_2^2-x_1^2}{|x|^4}\p{12}^2v-\frac{2x_1x_2}{|x|^4}\p{2}^2v\right)+
\left(-\frac{2x_2}{|x|^4}-\frac{4x_2(x_1^2-x_2^2)}{|x|^6}\right)\p{2}v+\left(-\frac{2x_1}{|x|^4}+\frac{8x_1x_2^2}{|x|^6}\right)\p{1}v\\&+\frac{x_1^2-x_2^2}{|x|^4}\left(\frac{x_1^2-x_2^2}{|x|^4}\p{2}^2v-\frac{2x_1x_2}{|x|^4}\p{12}^2v\right)
-\frac{2x_1x_2}{|x|^4}\left(\frac{x_1^2-x_2^2}{|x|^4}\p{12}^2v-\frac{2x_1x_2}{|x|^4}\p{1}^2v\right)\\
&=\frac{1}{|x|^4}\Delta v
\end{align*}
Indeed, the terms with $\p{1}v$ (resp. $\p{2}v$), is zero, as the sum of the coefficients is :
\begin{align*}
\left(-\frac{2x_1}{|x|^4}-\frac{4x_1(x_2^2-x_1^2)}{|x|^6}\right)+\left(-\frac{2x_1}{|x|^4}+\frac{8x_1x_2^2}{|x|^6}\right)=\frac{1}{|x|^6}\left(-4x_1(x_1^2+x_2^2)-4x_1(x_2^2-x_1^2)+8x_1x_2^2\right)=0
\end{align*}
The coefficient of $\p{12}^2v$ is
\begin{align*}
\frac{1}{|x|^8}\left((x_2^2-x_1^2)(-2x_1x_2)-2x_1x_2(x_2^2-x_1^2)+(x_1^2-x_2^2)(-2x_1x_2)-2x_1x_2(x_1^2-x_2^2)\right)=0
\end{align*}
And the coefficient in front of $\p{i}^2v$ (for $i=1,2$), is
\begin{align*}
\frac{1}{|x|^8}\left((x_2^2-x_1^2)^2+4x_1^2x_2^2\right)=\frac{1}{|x|^4}.
\end{align*}
So we get
\begin{equation}\label{flatdelta}
	\Delta_gw(i(x))=4v(i(x))-4\s{dv(i(x))}{i(x)}+\frac{1}{|x|^2}\Delta v(i(x))+O\left(\frac{1}{|x|^3}\right)
\end{equation}
Furthermore, we have
\begin{align*}
	\star\d{1}\phi(x)\wedge\d{2}\phi(x)=\left(O\left(\frac{1}{|x|^2}\right),O\left(\frac{1}{|x|^2}\right),1\right)
\end{align*}
so
\begin{align*}
	\n(x)=\left(O\left(\frac{1}{|x|^2}\right),O\left(\frac{1}{|x|^2}\right),1\right)
\end{align*}
and for $1\leq i,j\leq 2$,
\begin{align*}
	\partial_{x_i,x_j}\phi(x)=\left(0,0,O\left(\frac{1}{|x|^3}\right)\right)
\end{align*}
so
\begin{align*}
	\I_{i,j}=\s{\partial_{x_1,x_2}^2\phi}{\n}=O\left(\frac{1}{|x|^3}\right)
\end{align*}
and
\begin{align*}
	K(x)=O\left(\frac{1}{|x|^6}\right)
\end{align*}
so as $\star dw(i(x))=O(|x|^2)$
\begin{align}\label{nullcurvature}
	2K(x)w(i(x))\star dw(x)=2K(x)|x|^2v(i(x))O(|x|^2)=O\left(\frac{1}{|x|^6}\right)|x|^2v(i(x))O(|x|^2)=O\left(\frac{1}{|x|^2}\right)
\end{align}
so
\begin{align*}
	\int_{S^1_r}2K(x)w(i(x))\star dw(i(x))=O\left(\frac{1}{r}\right).
\end{align*}

Now, the error terms of order less than $O(|x|^{-2})$ in $|dw|^2_g$ will vanish when $r\rightarrow\infty$. By \eqref{flatmetric}, as $Dw(x)=O(|x|)$, for $i\neq j$,
\begin{align*}
	g^{i,j}(x)\d{i}w(x)\d{j}w(x)=O\left(\frac{1}{|x|^4}\right)O(|x|^2)=O\left(\frac{1}{|x|^2}\right)
\end{align*}
Therefore, we will omit all these error terms, and we do not write the $O(|x|^{-2})$ in equalities
\begin{align}\label{normedw}
	|dw(x)|_g^2&=|\d{1}w(x)|^2+|\d{2}w(x)|^2\nonumber\\
	&={\left\{\left(2x_1+O(|x|^{-3})\right)v(i(x))+(|x|^{-2}+a^2|x|^{-4}+O(|x|^{-6}))\left((x_2^2-x_1^2)\partial_1v(i(x))-2x_1x_2\partial_2v(i(x))\right)\right\}}^2\nonumber\\
	&+{\left\{\left(2x_2+O(|x|^{-3})\right)v(i(x))+(|x|^{-2}+a^2|x|^{-4}+O(|x|^{-6}))\left((x_2^2-x_1^2)\partial_2v(i(x))-2x_1x_2\partial_1v(i(x))\right)\right\}}^2\nonumber\\
	&=\left(2x_1v(i(x))+|x|^{-2}((x_2^2-x_1^2)\partial_1v(i(x))-2x_1x_2\partial_2v(i(x))\right)^2\nonumber\\
	&+\left(2x_2v(i(x))+|x|^{-2}((x_1^2-x_2^2)\partial_2v(i(x))-2x_1x_2\partial_2v_1(i(x))\right)^2\nonumber\\
	&=4|x|^2v(i(x))^2+(\partial_1v(i(x)))^2+(\partial_2v(i(x)))^2-4v(i(x))\s{x}{Dv(i(x))}
\end{align}
Now, $D^kv(i(x))=O(r^{-2k})$ for all $k\geq 0$, so
\begin{align*}
\star d \left((\partial_1v(i(x)))^2+(\partial_2v(i(x)))^2\right)=O\left(\frac{1}{|x|^2}\right),
\end{align*}
so we can drop these terms in the $O(|x|^{-2})$ error.
So we have by \eqref{flatdelta}, \eqref{nullcurvature}, \eqref{normedw}
\begin{align*}
	\Delta_gw(i(x))-\frac{1}{2}\star d |dw|_g^2&=\left(4v(i(x))-4\s{dv(i(x))}{i(x)}+\frac{1}{|x|^2}\Delta v(i(x))+O\left(\frac{1}{|x|^3}\right)\right)\star d\left(|x|^2v(i(x))\right)\\
	&-\frac{1}{2}\star d\left(4|x|^2v(i(x))^2-4v(i(x))\s{dv(i(x))}{x}\right)+O\left(\frac{1}{|x|^2}\right)
\end{align*}
And we remark that
\begin{align*}
	\star d\left(|x|^2v(i(x))^2\right)=2|x|^2v(i(x))\star d\left(v(i(x))\right)+v(i(x))^2\star d\left(|x|^2 \right)
\end{align*}
so
\begin{align*}
	4v(i(x))\star d\left(|x|^2v(i(x))\right)-\frac{1}{2}\star d\left(4|x|^2v(i(x))\right)=2v(i(x))^2\star d(|x|^2).
\end{align*}
Then, we have
\begin{align*}
	\star d\left(v(i(x))\s{dv(i(x))}{x}\right)&=\star d\left(|x|^2v(i(x))\s{dv(i(x))}{i(x)}\right)\\&=|x|^2v(i(x))\star d\left( \s{dv(i(x))}{i(x)}\right)+\s{dv(i(x))}{i(x)}\star d\left(|x|^2v(i(x))\right)
\end{align*}
so
\begin{align*}
	&-4\s{dv(i(x))}{i(x)}\star d\left(|x|^2v(i(x))\right)+2\star d\left(v(i(x))\s{dv(i(x))}{x}\right)\\
	&=2|x|^2v(i(x))\star d\left(\s{v(i(x))}{i(x)}\right)-2\s{dv(i(x))}{i(x)}\star d\left(|x|^2v(i(x))\right)
\end{align*}
Finally, we get
\begin{align}\label{0}
	&(\Delta_gw+2Kw)\star dw-\frac{1}{2}\star d|dw|_g^2=2v^2(i(x))\star d(|x|^2)+\frac{1}{|x|^2}\Delta v(i(x))\star d(|x|^2v(i(x)))\nonumber\\
	&+2|x|^2v(i(x))\star d\left(\s{dv(i(x))}{i(x)}\right)-2\s{dv(i(x))}{i(x)}\star d\left(|x|^2v(i(x))\right)+O\left(\frac{1}{|x|^2}\right)\nonumber\\
	&=(1)+(2)+(3)+(4)
\end{align}

We can develop $v$ at $0$ up to order two to get precise estimates of the boundary integral. We write for some real coefficients $\ens{a_{i,j}}_{i,j\geq 0}\subset \R$
\begin{align*}
	v(x)=\sum_{i,j=0}^ka_{i,j}x_1^ix_2^j+O(|x|^{k+1}).
\end{align*}
Now, we have
\begin{align*}
	\star d(|x|^2v(i(x))^2)=2v(i(x))^2(x_1dx_2-x_2dx_1)+2v(i(x))|x|^2\star dv(i(x)).
\end{align*}

as
\begin{align}\label{aij}
	a_{i,j}=\frac{\partial_1^i\partial_2^jv(0)}{i!j!}
\end{align}
We recall the three following formulas, valid for $x,y\in\R$;
\begin{align*}\label{prodcos}
2\cos x\cos x&=\cos(x+y)+\cos(x-y),\\
2\sin x\sin y&=\cos(x-y)-\cos(x+y),\\
2\cos x\sin y&=\sin(x+y)-\sin(x-y).
\end{align*}
Now
\begin{align*}
	\star dv(i(x))=\left(\frac{x_2^2-x_1^2}{|x|^4}\partial_1 v(i(x))-\frac{2x_1x_2}{|x|^4}\partial_2v(i(x))\right)dx_2-\left(\frac{x_1^2-x_2^2}{|x|^4}\partial_2 v(i(x))-\frac{2x_1x_2}{|x|^4}\partial_1v(i(x))\right)dx_1
\end{align*}
so using coordinates $x_1=r\cos(\theta)$, $x_2=r\sin(\theta)$, we get
\begin{align}
	\star dv(i(x))&=\frac{1}{r}\left(-\cos(2\theta)\p{1}v-\sin(2\theta)\p{2}v\right)\cos(\theta)+\frac{1}{r}\left(\cos(2\theta)\p{2}v-\sin(2\theta)\p{1}v\right)\sin(\theta)\nonumber\\
	&=-\left(\cos(2\theta)\cos(\theta)+\sin(2\theta)\sin(\theta)\right)\p{1}v+\left(\cos(2\theta)\sin(\theta)-\sin(2\theta)\cos(\theta)\right)\p{2}v\nonumber\\
	&=-\frac{1}{r}\left(\cos(\theta)\p{1}v+\sin(\theta)\p{2}v\right)
\end{align}

In particular, if $a,b\in\N$, with $a\neq b$, then
\begin{equation}\label{cosin}
	\int_{0}^{2\pi}\cos(ax)\sin(bx)dx=\int_{0}^{2\pi}\cos(ax)\cos(bx)dx=\int_{0}^{2\pi}\sin(ax)\sin(bx)dx=0
\end{equation}
So by \eqref{aij}, \eqref{cosin},
\begin{align*}
	&\int_{S^1_r}2v^2(i(x))\star (d|x|^2)=4\int_{S_r^1}v^2(i(x))(x_1dx_2-x_2dx_1)\\
	&=4\int_{0}^{2\pi}r^2\left(a_{0,0}+a_{1,0}\frac{\cos(\theta)}{r}+a_{0,1}\frac{\sin(\theta)}{r}+a_{2,0}\left(\frac{\cos(\theta)}{r}\right)^2+a_{0,2}\left(\frac{\sin(\theta)}{r}\right)^2+a_{1,1}\frac{\cos(\theta)\sin(\theta)}{r^2}+O(r^{-3})\right)^2d\theta\\
	&=4\int_{0}^{2\pi}\left(a_{0,0}^2r^2+2a_{0,0}\left(a_{2,0}\cos^2(\theta)+a_{0,2}\sin^2(\theta)\right)+a_{1,0}^2\cos^2(\theta)+a_{0,1}^2\sin^2(\theta)\right)d\theta\\
	&=8\pi a_{0,0}^2+8\pi a_{0,0}\left(a_{2,0}+a_{0,2}\right)+4\pi(a_{1,0}^2+a_{0,1}^2)\\
	&=8\pi r^2 v(p)^2+4\pi\left(v(p)\Delta v(p)+|dv(p)|^2\right)
\end{align*}
therefore
\begin{equation}\label{1}
	\int_{S_r^1}(1)=8\pi r^2 v(p)^2+4\pi\left(v(p)\Delta v(p)+|dv(p)|^2\right).
\end{equation}
Then,
\begin{align*}
&\int_{S_r^1}|x|^2\Delta(v(i(x)))\star d\left(|x|^2v(i(x))\right)=\int_{0}^{2\pi}2v(i(x))\Delta v(i(x))\frac{1}{|x|^2}\left(x_1dx_2-x_2dx_1\right)\\
&+\int_{0}^{2\pi}2v(i(x))\Delta v(i(x))\frac{1}{|x|^2}\bigg\{|x|^2\left(\frac{x_2^2-x_1^2}{|x|^4}\partial_1 v(i(x))-\frac{2x_1x_2}{|x|^4}\partial_2v(i(x))\right)dx_2\\
&-\left(\frac{x_1^2-x_2^2}{|x|^4}\partial_2 v(i(x))-\frac{2x_1x_2}{|x|^4}\partial_1v(i(x))\right)dx_1\bigg\}\\
&=4\pi v(p)\Delta v(p)+O(r^{-1}).
\end{align*}
thus
\begin{equation}\label{2}
	\int_{S_r^1}(2)=4\pi v(p)\Delta v(p)+O(r^{-1}).
\end{equation}

We now compute
\begin{align*}
	&|x|^2v(i(x))\star d\left(\s{dv(i(x))}{i(x)}\right)=|x|^2v(i(x))\bigg\{\d{1}\left(\frac{x_1}{|x|^2}\p{1}v+\frac{x_2}{|x|^2}\p{2}v\right)dx_2-\d{2}\left(\frac{x_1}{|x|^2}\p{1}v+\frac{x_2}{|x|^2}\p{2}v\right)dx_1\bigg\}\\
	&=|x|^2v(i(x))\bigg\{\left(\frac{x_2^2-x_1^2}{|x|^4}\p{1}v-\frac{2x_1x_2}{|x|^4}\p{2}v\right)dx_2-\left(\frac{x_1^2-x_2^2}{|x|^4}\p{2}v-\frac{2x_1x_2}{|x|^4}\p{1}v\right)\\
	&+\frac{x_1}{|x|^2}\left(\frac{x_2^2-x_1^2}{|x|^4}\p{1}^2v-\frac{2x_1x_2}{|x|^4}\p{12}^2v\right)dx_2+\frac{x_2}{|x|^2}\left(\frac{x_2^2-x_1^2}{|x|^4}\p{12}^2v-\frac{2x_1x_2}{|x|^4}\p{2}^2v\right)dx_2\\
	&-\frac{x_1}{|x|^2}\left(\frac{x_1^2-x_2^2}{|x|^4}\p{12}^2v-\frac{2x_1x_2}{|x|^4}\p{1}^2v\right)dx_1-\frac{x_2}{|x|^2}\left(\frac{x_1^2-x_2^2}{|x|^4}\p{2}^2v-\frac{2x_1x_2}{|x|^4}\p{12}^2v\right)dx_1\bigg\}
	\\
	&=\textnormal{(i)+(ii)}
\end{align*}
And we see that
\begin{align*}
	\mathrm{(i)}&=|x|^2v(i(x))\bigg\{\left(\frac{x_2^2-x_1^2}{|x|^4}\p{1}v-\frac{2x_1x_2}{|x|^4}\p{2}v\right)dx_2-\left(\frac{x_1^2-x_2^2}{|x|^4}\p{2}v-\frac{2x_1x_2}{|x|^4}\p{1}v\right)\bigg\}\\
	&=|x|^2v(i(x))\star dv(i(x))
\end{align*}
and we have already computed this integral, so 
\begin{equation}\label{3i}
	\int_{S_r^1}\mathrm{(i)}=-\pi\left(|dv(p)|^2+v(p)\Delta v(p)\right).
\end{equation}
and
\begin{align*}
	\int_{S_r^1}\mathrm{(ii)}&=\int_{S_r^1}v(i(x))\bigg\{\frac{1}{|x|^4}\left(x_1^2(x_2^2-x_1^2)-2x_1^2x_2^2\right)\p{1}^2v+\left(-2x_1^2x_2^2+x_2^2(x_1^2-x_2^2)\right)\p{2}^2v\bigg\}+O(r^{-1})\\
	&=-\int_0^{2\pi}v\left(\frac{\cos(\theta)}{r},\frac{\sin(\theta)}{r}\right)\left(\cos^2(\theta)\p{1}^2v\left(\frac{\cos(\theta)}{r},\frac{\sin(\theta)}{r}\right)+\sin^2(\theta)\p{2}^2v\left(\frac{\cos(\theta)}{r},\frac{\sin(\theta)}{r}\right)\right)d\theta+O(r^{-1})\\
	&=-\pi v(p)\Delta v(p)
\end{align*}
and finally
\begin{align}\label{3}
	\int_{S_r^1}(3)=2\int_{S_r^1}\textnormal{(i)+(ii)}=-2\pi\left(|dv(p)|^2+2v(p)\Delta v(p)\right)
\end{align}
We have only left $(4)$: 
\begin{align*}
	\int_{S_r^1}(4)&=-2\int_{S^1_r}\s{dv(i(x))}{i(x)}\star d\left(|x|^2v(i(x))\right)\\
	&=-2\int_{S_r^1}\s{dv(i(x))}{i(x)}v(i(x))2(x_1dx_2-x_2dx_1)\\
	&-2\int_{S^1_r}\s{dv(i(x))}{x}\star dv(i(x))\\
	&=-2\left\{\textnormal{(iii)+(iv)}\right\}
\end{align*}
Now, we easily compute
\begin{align*}
	\mathrm{(iii)}&=\int_{0}^{2\pi}\left(\p{1}v\frac{\cos(\theta)}{r}+\p{2}v\frac{\sin(\theta)}{r}\right)2vr^2d\theta\\
	&=2\int_{0}^{2\pi}\left(ra_{0,0}+a_{1,0}\cos(\theta)+a_{0,1}\sin(\theta)\right)\bigg\{\cos(\theta)\left(a_{1,0}+2a_{2,0}\frac{\cos(\theta)}{r}+2a_{1,1}\frac{\sin(\theta)}{r}\right)\\
	&+\sin(\theta)\left(a_{0,1}+2a_{0,2}\frac{\sin(\theta)}{r}+2a_{1,1}\frac{\cos(\theta)}{r}\right)\bigg\}d\theta+O(r^{-1})\\
	&=2\int_{0}^{2\pi}\left(a_{1,0}^2\cos^2(\theta)+a_{0,1}^2 \sin^2(\theta)+2a_{0,0}\left(a_{2,0}\cos^2(\theta)+a_{0,2}\sin^2(\theta)\right)\right)d\theta+O(r^{-1})\\
	&=2\pi(|dv(p)|^2+v(p)\Delta v(p))
\end{align*}
while
\begin{align*}
	\mathrm{(iv)}&=-\int_0^{2\pi}\left(r\cos(\theta)\p{1}v+r\sin(\theta)\p{2}v\right)\left(\frac{-1}{r}\left(\d{1}v\cos(\theta)+\d{2}v\sin(\theta)\right)\right)+O(r^{-1})\\
	&=-\int_{0}^{2\pi}(\p{1}v\,\cos(\theta)+\p{2}\sin(\theta))^2d\theta+O(r^{-1})\\
	&=-\pi|dv(p)|^2.
\end{align*}
Gathering estimates, we get
\begin{equation}\label{4}
	\int_{S_r^1}(4)=-2\pi\left(|dv(p)|^2+2v(p)\Delta v(p)\right)
\end{equation}
and we finally obtain by \eqref{0}, \eqref{1}, \eqref{2}, \eqref{3}, \eqref{3}, \eqref{4}
\begin{equation}\label{finald2k}
	\int_{S^1_r}\left((\Delta_gw+2Kw)\star dw-\frac{1}{2}\star d|dw|_g^2\right)=-8\pi r^2v^2(p)+O\left(\frac{1}{r}\right)
\end{equation}
and by \eqref{defresidu}, we get the correct multiplicative factor of $\alpha^2$ in front of this expression, which gives the correct expression as
\begin{align*}
	\mathrm{Res}_p\phi=2\alpha^2.
\end{align*}
This concludes the proof of the theorem.
\end{proof}

We shall verify that our is well-defined, as it seems at first glance singular. To do so, we need to estimate the integral 
\begin{align*}
	\frac{1}{2}\int_{D^2(p_j,R_0)\setminus D^2(p_j,R)}\left(\Delta_gw-2K_gw\right)^2d\vg,\quad 1\leq j\leq m
\end{align*}
 when $R\rightarrow 0$, where $w=|\phi|^2w^2$. Using the change of variable \eqref{changevariable}, this is equivalent to estimate
\begin{align*}
	\frac{1}{2}\int_{D^2(0,r)\setminus D^2(0,1)}\left(\Delta_gw-2K_gw\right)^2d\vg
\end{align*}
when $r\rightarrow\infty$, and $r^2=\dfrac{\alpha^2}{R^2}$. The asymptotic flatness coming from the proof of the theorem shows that
\begin{align*}
	&\frac{1}{2}\int_{D^2(0,r)\setminus D^2(0,1)}(\Delta_gw-2K_gw)^2d\vg\\
	&=\frac{1}{2}\int_{D^2(0,r)\setminus D^2(0,1)}\left(\Delta \left(|x|^2v\left(\frac{x}{|x|^2}\right)\right)-2K_g(x)|x|^2v\left(\frac{x}{|x|^2}\right)\right)^2d\leb^2(x)+O(1).
\end{align*}
Furthermore,
\begin{align*}
	\Delta \left(|x|^2v\left(\frac{x}{|x|^2}\right)\right)=4v\left(\frac{x}{|x|^2}\right)+\frac{4}{|x|^2}\left(x_1\partial_1v\left(\frac{x}{|x|^2}\right)+x_2\partial_{2}v\left(\frac{x}{|x|^2}\right)\right)+\frac{1}{|x|^2}\Delta v\left(\frac{x}{|x|^2}\right)=O(1)
\end{align*}
and recall that
\begin{align*}
	K_g(x)=O\left(\frac{1}{|x|^6}\right).
\end{align*}
so the only singular terms will come from $(\Delta_gw)^2$. Using polar coordinates $(x,y)=(\rho\cos(\theta),\rho\sin(\theta))$, $(\rho,\theta)\in [1,r]\times S^1$, 
\begin{align*}
	&\Delta \left(|x|^2v\left(\frac{x}{|x|^2}\right)\right)=4\left(a_{0,0}+a_{1,0}\frac{\cos(\theta)}{\rho}+a_{0,1}\frac{\sin(\theta)}{\rho}+a_{2,0}\frac{\cos^2(\theta)}{\rho^2}+\frac{\sin^2(\theta)}{\rho^2}+a_{1,1}\frac{\sin(2\theta)}{2\rho^2}\right)\\
	&-4\frac{\cos(\theta)}{\rho}\left(a_{1,0}+2a_{2,0}\frac{\cos(\theta)}{\rho}+a_{1,1}\frac{\sin(\theta)}{\rho}\right)-4\frac{\sin(\theta)}{\rho}\left(a_{0,1}+2a_{0,2}\frac{\sin(\theta)}{\rho}+a_{1,1}\frac{\cos(\theta)}{\rho}\right)\\
	&+\frac{2(a_{2,0}+a_{0,2})}{\rho^2}+O\left(\frac{1}{\rho^3}\right)\\
	&=4\left(a_{0,0}-a_{2,0}\frac{\cos^2(\theta)}{\rho^2}-a_{0,2}\frac{\sin^2(\theta)}{\rho^2}-a_{1,1}\frac{\sin(2\theta)}{2\rho^2}\right)+\frac{2(a_{2,0}+a_{0,2})}{\rho^2}+O\left(\frac{1}{\rho^3}\right)
\end{align*}
so
\begin{align*}
	&\frac{1}{2}\int_{D^2(0,r)\setminus D^2(0,1)}(\Delta_gw-2K_gw)^2d\vg\\
	&=\frac{1}{2}\int_{1}^r\int_{S^1}\left(16a_{0,0}^2-32a_{0,0}\left(a_{2,0}\frac{\cos^2(\theta)}{\rho^2}+a_{0,2}\frac{\sin^2(\theta)}{\rho^2}\right)+16\frac{a_{0,0}(a_{2,0}+a_{0,2})}{\rho^2}\right)\rho\, d\rho d\theta+O(1)\\
	&=16\pi a_{0,0}^2\int_{1}^{r}\rho^2d\rho-16a_{0,0}\left(a_{2,0}\left(\int_{1}^{r}\frac{d\rho}{\rho}\right)\left(\int_{S^1}\cos^2(\theta)d\theta\right)+a_{0,2}\left(\int_{1}^{r}\frac{d\rho}{\rho}\right)\left(\int_{S^1}\sin^2(\theta)d\theta\right)\right)\\
	&+16\pi a_{0,0}(a_{2,0}+a_{0,2})\int_{1}^{r}\frac{d\rho}{\rho}\\
	&=16\pi a_{0,0}^2\frac{(r^2-1)}{2}-16\pi a_{0,0}(a_{2,0}+a_{0,2})\log r+16\pi a_{0,0}(a_{2,0}+a_{0,2})\log r+O(1)\\
	&=8\pi a_{0,0}^2r^2+O(1)\\
	&=8\pi v^2(p_j)r^2+O(1)
\end{align*}
which proves comparing with \eqref{finald2k} that the expression \eqref{magique} is well-defined.

\subsection{Proof of main theorem \ref{main}}

\begin{proof}
	We are now in measure to prove the main theorem thanks of the preceding formula and Bryant's theorem \ref{theobryant}.
	Suppose first that $\Psi:S^2\rightarrow S^3$ is a non-umbilic Willmore sphere such that
	\begin{align*}
		m=\frac{1}{4\pi}\W(\Psi)
	\end{align*} 
	then it is the inversion at $0$ (after translation if necessary), after a stereographic projection in $\R^3$, of a minimal surface with $m$ embedded ends with zero logarithmic growth and finite total curvature. Furthermore, for all normal variation $\vec{v}=v\n$ of $\Psi$ such that $v\in W^{2,2}(S^2,\R)$, if $\phi=i\circ \Psi$, $w=|\phi|^2v$, we have by \eqref{magique},
    \[
			D^2\W(\psi)[\vec{v},\vec{v}]=\lim_{R\rightarrow 0}\frac{1}{2}\int_{\Sigma_R}(\Delta_g w-2K_gw)^2d\vg-
			4\pi\sum_{j=1}^m\frac{\mathrm{Res}_{p_j}(\phi)}{R^2}\,v^2(p_j)
	\]
	In particular, if $v(p_j)=0$ for all $1\leq j\leq m$, then
	\begin{align*}
		D^2 \W(\psi)[\vec{v},\vec{v}]\geq 0.
	\end{align*}
	If we introduce the map
	\begin{align*}
	&V : W^{2,2}(S^2,\R)\rightarrow \R^m\\
	&v\mapsto(v(p_1),\cdots, v(p_m))
	\end{align*}
	Then $V^{-1}(0)\subset W^{2,2}(S^2,\R)$ is a closed sub-space of codimension $m$. And the second variation of $W$ is non-negative on the complement of $V^{-1}(0)$, so the dimension of the subspace of $W^{2,2}(S^2,\R)$ where $D^2\W(\psi)$ is negative is bounded by $m$. Therefore, $\mathrm{Ind}_\W(\Psi)\leq m$.
	
	Now, we treat the umbilical case (where $m=1$). Thanks of the result of Bryant (\cite{bryant}), if there exists an umbilical point, then $\psi:S^2\rightarrow S^3$ is totally umbilical, and is a geodesic $2$-sphere. In particular $\psi:S^2\rightarrow S^3$ is a minimal immersion. As the Gauss map of a minimal surface is holomorphic, it is \textit{a fortiori} harmonic, so
	\begin{align*}
		\Delta_g \n+|d\n|_g^2\,\n=0
	\end{align*}
	if $g=\psi^*g_{S^3}$, and $\n:S^2\rightarrow S^2$ is the Gauss map of $\psi$. The Jacobi operator of the minimal surface $\psi:S^2\rightarrow S^3$ is simply
	\begin{align*}
		L_g=\Delta_g+(|\vec{\I}_g|^2+2)=\Delta_g+(|d\n|_g^2+2)
	\end{align*} 
	so for all $a\in\R^4$, we have
	\begin{align*}
		L_g(a\cdot\n)=a\cdot\left(\Delta_g\n+|d\n|_g^2\,\n\right)+2a\cdot\n=2a\cdot \n
	\end{align*}
	so $2$ is a positive eigenvalue of the Jacobi operator, and one can show that it is the only one.. Furthermore, the associated eigenspace is $1$-dimensional (see the paper of Frederick Almgren \cite{almgren}) so $\psi:S^2\rightarrow S^3$ is of index $1$.
	 Therefore, by \eqref{indicewillmin} and \eqref{formuleindicemin} $\psi$ is stable \textit{i.e.} $\mathrm{Ind}_{\W}(\psi)=0$, and the bound is trivially verified.
\end{proof}

\begin{rem}
	This proof shows in particular that whenever in the classification of Bryant, we have an Willmore immersion $\psi:\Sigma\rightarrow S^3$ from a closed Riemann surface $\Sigma$ which is not totally umbilic, with Bryant's quadratic form $\mathscr{Q}_{\psi}$ identically equal to $0$ (theorem E in \cite{bryant}), then for some stereographic projection $\pi:S^3\rightarrow \R^3$, $\pi\circ \psi:\Sigma\rightarrow\R^3$ is a branched minimal immersion with embedded planar ends. In particular $\W(\psi)$ is quantized by $4\pi$ and
	\begin{align*}
		\mathrm{Ind}_{\W}(\psi)\leq \frac{1}{4\pi}\W_{S^3}(\psi).
	\end{align*}  
    However, when the genus of $\Sigma$ is larger than $1$, then there are Willmore immersions with non-zero quadratic form $\mathscr{Q}_{\psi}$. One example is furnished by the Clifford torus in $S^3$ with energy $2\pi^2$, which cannot be the stereographic projection of a minimal surface in $\R^3$.
\end{rem}

\subsection{Index and Schrödinger operators}\label{quadraticschro}

Let $\bar{\Sigma}$ a closed Riemann surface and $\Sigma=\bar{\Sigma}\setminus\ens{p_1,\cdots, p_m}$ a Riemann surface with $m$ punctured points $\ens{p_1,\cdots, p_m}\subset \bar{\Sigma}$. If $\phi:\Sigma\rightarrow\R^3$ is a minimal immersion, recall that for a normal variation $\w=w\n$, we have
\begin{align*}
	D^2A(\phi)[\w,\w]&=\int_{\Sigma}\left(|dw|_g^2+2K_gw^2\right)d\vg\\
	&=-\int_{\Sigma}^{}w\left(L_gw\right)d\vg
\end{align*}
if $L_g=\Delta_g-2K_g=\Delta_g+|dN|_g^2$ if $N:\Sigma\rightarrow S^2$ is the Gauss map of $\phi$. If $\phi$ conformal, and if $d\mathrm{vol}_{\Sigma}$ is a canonical volume form on $\Sigma$, we have,
\begin{align*}
    d\mathrm{vol}_g&=\frac{|d\phi|^2}{2}d\mathrm{vol}_\Sigma\\
	-2K_g&=|dN|_g^2=2|d\phi|^{-2}|dN|^2
\end{align*}
so if $M=\phi(\Sigma)$ is a finite curvature minimal surface with $m$ embedded planar ends, and $\psi:\Sigma\rightarrow \R^3$ is the inversion at $0$ of $\phi$, we have if $\vec{v}=v\n$ is a normal variation, by \eqref{magique}
\begin{align*}
	D^2\W(\psi)[\vec{v},\vec{w}]&=\lim\limits_{R\rightarrow 0}\frac{1}{2}\int_{\Sigma_R}^{}\left(L_g(|\phi|^2v)\right)^2d\vg-4\pi\sum_{j=1}^{m}\frac{\mathrm{Res}_{p_j}\phi}{R^2}v^2(p_j)\\
	&=\lim\limits_{R\rightarrow 0}\frac{1}{2}\int_{\Sigma_R}^{}\left(2|d\phi|^{-2}\Delta_{\Sigma}(|\phi|^2v)+2|d\phi|^{-2}|dN|^2|\phi|^2v^2\right)^2\frac{|d\phi|^2}{2}d\mathrm{vol}_{\Sigma}-4\pi\sum_{j=1}^{m}\frac{\mathrm{Res}_{p_j}\phi}{R^2}v^2(p_j)\\
	&=\lim_{R\rightarrow 0}\int_{\Sigma_R}^{}\left(L_{\phi}(|\phi|^2v)\right)^2\frac{d\mathrm{vol}_{\Sigma}}{|d\phi|^{2}}-4\pi\sum_{j=1}^{m}\frac{\mathrm{Res}_{p_j}\phi}{R^2}v^2(p_j)
\end{align*}
if $L_{\phi}=\Delta_{\Sigma}+|dN|^2$. Moreover, the Gauss map $N:\Sigma\rightarrow S^2$ is a holomorphic map, so it is in particular harmonic, \textit{i.e.}
\begin{align*}
	\Delta_{\Sigma}N+|dN|^2N=0.
\end{align*}
Therefore (recalling \eqref{defresidu} for the definition of the residue) we can study in general the problem of finding the index of the following quadratic form
\begin{align*}
	Q_{f}(v,v)=\lim\limits_{R\rightarrow 0}\int_{\Sigma_R}^{}\left(L_f(|\Re f|^2v)\right)^2\frac{d\mathrm{vol}_{\Sigma}}{|\Re df|^2}-4\pi\sum_{j=1}^m\frac{|\mathrm{Res}_{p_j}f(z)dz|^2}{R^2 }v^2(p_j)
\end{align*}
where $f:\Sigma\rightarrow \C^3$ is a meromorphic immersion with at most simple poles at each end $p_j\in\bar{\Sigma}$ ($1\leq j\leq m$), and $N:\Sigma\rightarrow S^2$ is the holomorphic Gauss map of $\Re f:\Sigma\rightarrow \R^3$. 

\begin{rem}
	We remark that for all conformal transformation of $\varphi:\bar{\Sigma}\rightarrow\bar{\Sigma}$, we have
	\begin{align*}
		Q_{f\circ\varphi}(v\circ \varphi,v\circ\varphi)=Q_f(v,v)
	\end{align*}
	This is easily seen by the conformal invariance of the Laplacian in two-dimensions (see the book of Frédéric Hélein \cite{helein}). 
\end{rem}

\nocite{}
\bibliographystyle{alpha}
\bibliography{biblio}

\end{document}